\numberwithin{equation}{section}
\newtheorem{lemma}{Lemma} \numberwithin{lemma}{section}
\newtheorem{prop}[lemma]{Proposition}
\newtheorem{theorem}{Theorem}
\newtheorem{corollary}[lemma]{Corollary} 
\newtheorem*{theorem*}{Theorem} 
\newtheorem*{corollary*}{Corollary}
\theoremstyle{definition} 
\newtheorem{df}[lemma]{Definition} \Crefname{df}{Definition}{Definitions}
\newtheorem{example}[lemma]{Example} \Crefname{example}{Example}{Examples}
\theoremstyle{remark} 
\newtheorem{rem}[lemma]{Remark}
\newtheorem{conv}[lemma]{Convention} \Crefname{conv}{Convention}{Conventions}
\DeclareMathOperator\St{St}
\DeclareMathOperator\Ker{Ker}
\DeclareMathOperator\GG{G}
\DeclareMathOperator\E{E}
\DeclareMathOperator{\Pro}{Pro}
\DeclareMathOperator\Aut{Aut}
\newcommand{\Set}{\mathbf{Set}}
\newcommand{\Group}{\mathbf{Grp}}
\newcommand{\Rng}{\mathbf{Rng}}
\newcommand{\Fun}{\mathbf{Fun}}
\newcommand{\Mod}{\mathbf{Mod}}
\newcommand{\op}{\mathrm{op}}
\newcommand{\ZZ}{\mathbb{Z}}
\newcommand{\otimeshat}{\mathbin{\widehat{\otimes}}}
\newcommand{\up}[2]{{^{#1}\!{#2}}}
\newcommand{\rA}{\mathsf{A}}
\newcommand{\rB}{\mathsf{B}}
\newcommand{\rC}{\mathsf{C}}
\newcommand{\rD}{\mathsf{D}}
\newcommand{\rE}{\mathsf{E}}
\newcommand{\rF}{\mathsf{F}}
\title{Centrality of $\mathrm K_2$ for Chevalley groups: a pro-group approach}
\keywords {Steinberg group, pro-group, Suslin normality theorem. {\em Mathematical Subject Classification (2010):} 19C09, 20G35, 20H05}
\author[1] {Andrei Lavrenov} \email{avlavrenov at gmail.com} 
\author[2] {Sergey Sinchuk} \email{sinchukss at gmail.com}
\author[3] {Egor Voronetsky} \email{voronetckiiegor at yandex.ru}
\address{St. Petersburg State University, 14th Line V.O., 29B, Saint Petersburg 199178 Russia}
\date {\today}
\begin{document}
\maketitle
\begin{abstract} 
We prove the centrality of $\mathrm K_2 (\rF_4, \,R)$ for an arbitrary commutative ring $R$. This completes the proof of the centrality of $\mathrm K_2(\Phi,\, R)$ for any root system $\Phi$ of rank $\geq 3$. Our proof uses only elementary localization techniques reformulated in terms of pro-groups. 
Another new result of the paper is the construction of a crossed module on the canonical homomorphism $\St(\Phi, R) \to \GG_\mathrm{sc}(\Phi, R)$, which has not been known previouly for exceptional $\Phi$. \end{abstract}

\section{Introduction}
The aim of this paper is to give a uniform proof of the centrality of the $\mathrm{K}_2$-functor modeled on Chevalley groups for an arbitrary commutative ring.
Our proof is based on the technique of pro-groups introduced by the third-named author in~\cite{Vor1} and~\cite{Vor2}.
Recall that~\cite{Vor1} is dedicated to the centrality of the linear $\mathrm{K}_2$ for not necessarily commutative rings while~\cite{Vor2} focuses on the study of the centrality problem in the context of {\it odd-unitary groups}, a large class of groups generalizing the usual classical groups; see~\cite{Pe05}.

Let us briefly explain the reader how our results fit into the general picture.
Recall that Steinberg groups $\St(\Phi, R)$ are certain groups given by generators and relations, that were classically introduced by R.~Steinberg and J.~Milnor in~\cite{St62, Milnor} as combinatorial approximations of the simply-connected Chevalley groups. For example, every element of the special linear group $\mathrm{SL}_n(\mathbb F_q)$ over a finite field can be presented as a product of elementary transvections $t_{ij}(a)$ ($i\neq j$, $a\in \mathbb F_q$). These elements are taken as the generators of the Steinberg group $\St_n(\mathbb{F}_q)$. 
Moreover, any two such presentations of the same element of $\mathrm{SL}_n(\mathbb{F}_q)$ can be rewritten one into the other
 via a sequence of the following ``elementary'' relations: 
\begin{itemize} 
 \item $t_{ij}(a)t_{ij}(b) = t_{ij}(a+b)$; 
 \item $t_{ij}(a)t_{hk}(b) = t_{hk}(b)t_{ij}(a)$ for $j\neq h$ and $i\neq k$; 
 \item $t_{ij}(a)t_{jk}(b)t_{ij}(-a)t_{jk}(-b) = t_{ik}(ab)$ for $i\neq k$.
\end{itemize} 
These relations are called {\it Steinberg relations} and are taken as the defining relations of the Steinberg group $\St_n(\mathbb{F}_q)$. Thus, in the finite field case one has $\St_n(\mathbb{F}_q) \cong \mathrm{SL}_n(\mathbb{F}_q)$.

For an infinite field $F$ the Steinberg group $\mathrm{St}(\Phi,\,F)$ no longer coincides with the simply-connected Chevalley group $\GG_{\mathrm{sc}}(\Phi, F)$, but rather is the universal central extension of the latter.
By the classical theory of central extensions the kernel of the natural homomorphism $\St(\Phi,\,F)\to\GG_{\mathrm{sc}}(\Phi, F)$ coincides with the Schur multiplier $\mathrm H_2(\mathrm G_{\mathrm{sc}}(\Phi,\,F),\,\mathbb Z)$. The concrete presentation of this kernel in the field case was obtained by H.~Matsumoto in~\cite{Ma69}. Similar presentation for the special linear group over a skew field also has been obtained by U.~Rehmann in~\cite{Re78}.

Recall that the algebraic $\mathrm K_2$-functor was originally defined by J.~Milnor as the kernel of the homomorphism $\St_\infty(R) \to \mathrm{GL}_\infty(R)$, i.\,e. as the group of ``nontrivial'' relations between transvections in the infinite-dimensional general linear group. Also he showed that the Schur multipliers of the linear Steinberg group $\St_n(R)$ are trivial for $n \geq 5$ and an arbitrary $R$. This shows that $\St_n(R)$ is a good candidate for the role of the universal central extension of the elementary subgroup $\mathrm{E}_n(R) = \mathrm{Im}(\St_n(R) \to \mathrm{SL}_n(R))$.

Milnor's approach was followed by M.~Stein who defined Steinberg groups $\St(\Phi, R)$ in the context of arbitrary Chevalley groups and, by analogy,  defined the functor $\mathrm K_2(\Phi, R)$ as the kernel of the natural homomorphism \begin{equation}\label{eq:st}\mathrm{st}\colon \St(\Phi, R) \to \GG_{\mathrm{sc}}(\Phi, R).\end{equation} Also M.~Stein and W.~van der Kallen computed Schur multipliers of $\St(\Phi, R)$ for $\Phi$ of rank $\geq 3$; see~\cite{St71, vdKStein}. The Schur multipliers in the setting of unitary Steinberg groups over noncommutative rings have also been studied in~\cite{Bak, Tang, LavU, LavOU}.

The first proof of the centrality of the linear functor \[\mathrm K_{2,n}(R) = \Ker(\St_n(R) \to \mathrm{SL}_n(R))\] for an {\it arbitrary} commutative ring $R$ was obtained by W.~van der Kallen in~\cite{vdK} for $n \geq 4$. His proof was influenced by the proof of the Suslin normality theorem, which asserts that the elementary subgroup $\mathrm E_n(R)$ is normal in $\mathrm{SL}_n(R)$ for an arbitarary commutative $R$ and $n \geq 3$. The latter result is one of the ingredients in Suslin's proof of the so-called $\mathrm K_1$-analogue of the Serre problem; see~\cite{Suslin}. The result of van der Kallen was further generalized by M.~Tulenbaev to almost commutative rings, i.\,e. algebras module finite over their centers; see~\cite{Tul}.

While the centrality of $\mathrm K_2(\Phi, R)$ has been long known for rings of small Krull dimension, e.\,g. for semilocal rings (see~\cite{Ste73}), the question whether it holds for an {\it arbitrary} commutative ring has remained open since~\cite{vdK}.
On the other hand, the analogue of Suslin's normality theorem for Chevalley groups of rank $\geq 2$, i.\,e. the normality of the subgroup $\E_{\mathrm{sc}}(\Phi, R):= \mathrm{Im}(\mathrm{st})$ for an arbitrary commutative ring $R$, was soon obtained by G.~Taddei, see~\cite{Ta86}. Since then the normality theorem was generalized to even larger classes of groups e.\,g. isotropic reductive groups and odd unitary groups; see~\cite{Pe05,PS09,Ste16}.

The first advancement in the solution of the centrality problem for arbitrary commutative rings since~\cite{vdK} was the counterexample of M.~Wendt~\cite{Wendt} which showed that the centrality of $\mathrm K_2$ may fail for root systems of rank $2$ (similarly, there is a counterexample to Suslin's normality theorem for the rank $1$ group $\mathrm{SL}_2(R)$). Soon the papers~\cite{Lav,Sin,LavSin} of the first- and second-named authors appeared, in which it was shown that the centrality of $\mathrm K_2$ does, indeed, hold for all the Chevalley groups of type $\rC_\ell$, $\rD_\ell$, $\rE_\ell$ provided $\ell\geq 3$. In~\cite{Lav} a symplectic analogue of the technique of~\cite{vdK} was developed, while in~\cite{Sin, LavSin} the proof was based on the amalgamation theorem for relative Steinberg groups which reduced the problem of centrality to the already-known linear case. Nevertheless, neither of these two approaches seemed to work for the root systems $\rB_\ell$ and $\rF_4$. Finally, in~\cite{Vor1} the third-named author has developed a novel pro-group approach, which allowed him not only to generalize~\cite{vdK,Tul} to even larger class of noncommutative rings, but also to prove the centrality of $\mathrm K_2$ for odd unitary groups and, thus, cover all classical groups, including the case $\rB_\ell$, see~\cite{Vor2}. In the present paper these methods are employed to verify the centrality of $\mathrm K_2$ in the case $\Phi = \rF_4$, the only case which has remained open until now.
This is accomplished in~\cref{SteinbergCrossedModule} below. Our method of proof also applies to root systems of type $\rA_{\geq 3}$, $\rD_{\geq 4}$, $\rE_{6,7,8}$, and thus gives a proof of the centrality of $\mathrm{K}_2$ independent of the earlier papers~\cite{vdK,Sin,LavSin}. In the course of the proof we also reprove Taddei's normality theorem~\cite{Ta86} in the aforementioned cases; see~\cref{Normality}. Thus, the main consequence of \cite{Vor2} and our~\cref{SteinbergCrossedModule} can be formulated as follows.
\begin{theorem*}
Let $R$ be a commutative ring, $\Phi$ be an irreducible root system of rank $\geq 3$. 
Then the map $\mathrm{st}\colon\mathrm{St}(\Phi,\,R) \to \mathrm{E}_{\mathrm{sc}}(\Phi,\,R)$ is a central extension.
\end{theorem*}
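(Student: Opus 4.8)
The plan is to obtain the central-extension property as a formal consequence of a crossed module structure on $\mathrm{st}$. Recall that a crossed module is a homomorphism $\mu\colon M\to N$ equipped with an action of $N$ on $M$, written $\up{n}{m}$, subject to the equivariance relation $\mu(\up{n}{m})=n\,\mu(m)\,n^{-1}$ and the Peiffer identity $\up{\mu(m_1)}{m_2}=m_1\,m_2\,m_1^{-1}$ for all $m_1,m_2\in M$ and $n\in N$. In any crossed module the kernel of $\mu$ is central in $M$: if $\mu(m_1)=1$, then since $1\in N$ acts trivially the Peiffer identity specializes to $m_2=\up{\mu(m_1)}{m_2}=m_1\,m_2\,m_1^{-1}$, so that $m_1$ commutes with every $m_2\in M$. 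I would apply this with $M=\St(\Phi,R)$, $N=\GG_{\mathrm{sc}}(\Phi,R)$ and $\mu=\mathrm{st}$. As $\E_{\mathrm{sc}}(\Phi,R)=\mathrm{Im}(\mathrm{st})$ by definition, the map $\mathrm{st}$ is automatically surjective onto $\E_{\mathrm{sc}}(\Phi,R)$, while $\Ker(\mathrm{st})=\mathrm K_2(\Phi,R)$; the centrality just established is precisely the assertion that $\mathrm{st}$ is a central extension.

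It therefore suffices to equip $\mathrm{st}$ with a crossed module structure for every irreducible $\Phi$ of rank $\geq 3$, and here I would split into two families according to the available input. For $\Phi$ of type $\rA_{\geq 3}$, $\rD_{\geq 4}$, $\rE_{6,7,8}$ or $\rF_4$ the sought action of $\GG_{\mathrm{sc}}(\Phi,R)$ on $\St(\Phi,R)$ is furnished by \cref{SteinbergCrossedModule}, which is the technical core of the paper. For the remaining classical types $\rB_{\geq 3}$ and $\rC_{\geq 3}$ the Chevalley group is an odd unitary group, and the centrality of $\mathrm K_2$ --- equivalently, the corresponding crossed module --- has already been established in \cite{Vor2}. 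Since an irreducible root system of rank $\geq 3$ is of one of the types $\rA_{\geq 3}$, $\rB_{\geq 3}$, $\rC_{\geq 3}$, $\rD_{\geq 4}$, $\rE_{6,7,8}$ or $\rF_4$ (with $\rD_3$ identified with $\rA_3$), these two inputs exhaust all cases.

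The genuine difficulty is concentrated in the construction underlying \cref{SteinbergCrossedModule}, in particular in the new case $\rF_4$, where the symplectic and amalgamation techniques of \cite{Lav,Sin,LavSin} do not apply. The obstruction is that the naive candidate for the action --- lift an element of $\E_{\mathrm{sc}}(\Phi,R)$ to $\St(\Phi,R)$ and conjugate --- is circular, because a given element of $\E_{\mathrm{sc}}$ has many $\St$-preimages differing by the very group $\mathrm K_2$ whose centrality is at stake. The pro-group localization method is meant to break this circularity: one defines the action unambiguously over each principal localization $R[1/s]$, relying on local centrality over (semi)local rings as the base case (\cite{Ste73}), and then glues the locally defined actions over a finite family $s_1,\dots,s_k$ generating the unit ideal by a local--global principle, thereby descending to a single action over $R$. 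Verifying that the glued action actually satisfies the equivariance and Peiffer axioms, rather than merely existing, is the step I expect to demand the most work, and it is precisely what I would rely on \cref{SteinbergCrossedModule} to supply.
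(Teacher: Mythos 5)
Your proposal is correct and takes essentially the same route as the paper: the theorem is stated there precisely as the combination of \cite{Vor2} (covering types $\rB_{\geq 3}$ and $\rC_{\geq 3}$ via odd unitary groups) with \cref{SteinbergCrossedModule}, together with the standard fact that the kernel of a crossed module is central, which the paper records right after \cref{df:crossed-module}. The only cosmetic differences are that the paper proves centrality of $\mathrm K_2(\Phi,R)$ directly in \cref{CentralityK2} (and this lemma is in fact an \emph{input} to the proof of \cref{SteinbergCrossedModule}, so internally the implication runs the other way from your formal Peiffer-identity extraction), and that the localization machinery behind \cref{SteinbergCrossedModule} works with maximal-ideal localizations $R_M$ and the ideal argument $I \not\subseteq M$ rather than gluing actions over a finite cover by principal localizations as your closing sketch suggests --- neither point affects the validity of your deduction, since you invoke \cref{SteinbergCrossedModule} as a black box.
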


In fact, in~\cref{SteinbergCrossedModule} we prove a stronger result. More precisely, we endow the canonical homomorphism~\eqref{eq:st} with the structure of a crossed module; see~\cref{df:crossed-module}. This is yet another new result of the paper. Notice that no construction of a crossed module on~\eqref{eq:st} was presented in~\cite{Lav, Sin, LavSin}. Some of the material of the present paper (e.\,g.~\cref{sec:local-Chevalley}) is required only for this construction and is not needed for the proof of the centrality of $\mathrm K_2$.

Using the above theorem we obtain a result comparing Stein's $\mathrm{K}_2$-groups modeled on Chevalley groups with {\it Quillen's} unstable $\mathrm{K}_2$-groups.
Recall that the latter are defined by means of Quillen's $+$-construction: for a root system $\Phi$ of
rank $\geq3$ take the perfect normal subgroup $\mathrm{E}_{\mathrm{sc}}(\Phi, R) \trianglelefteq \mathrm{G}_{\mathrm{sc}}(\Phi, R)$ and set \[ \mathrm{K}_2^Q(\Phi, R) := \pi_2\mathrm{BG}_{\mathrm{sc}}(\Phi,\,R)^+_{\mathrm{E}_{\mathrm{sc}}(\Phi, R)}.\] The above group also coincides with the homology group $\mathrm H_2(\mathrm E_{\mathrm{sc}}(\Phi,\,R),\,\mathbb Z)$; see e.\,g.~\cite[\S~IV.1]{Weibel}.
Thus, combining the above theorem with~\cite{St71,vdKStein} we obtain the following result.
\begin{corollary*}
For an arbitrary commutative ring $R$ and a root system $\Phi$ of rank $\geq 3$ there is an exact sequence \[ \xymatrix{ 1\ar@{->}[r]&{\mathrm H_2(\St(\Phi,\,R),\ZZ)}\ar@{->}[r]&\mathrm K_2^Q(\Phi,\,R)\ar@{->}[r]&\mathrm K_2(\Phi,\,R)\ar@{->}[r]&1,}\] in which the group $\mathrm H_2(\mathrm{St}(\Phi, R), \ZZ)$ is trivial in the cases 
\[\Phi= \rA_{\geq 4},\rB_{\geq 4},\rC_{\geq 4}, \rD_{\geq 5}, \rE_{6,7,8},\] or is one of the groups in the following list:
\begin{align*}
\mathrm H_2\big(\St(\rA_3,\,R),\ZZ\big)&=R/\langle2, (t^2-t)(s^2-s)\mid t,\,s\in R\rangle,\\
\mathrm H_2\big(\St(\rB_3,\,R),\ZZ\big)&=R/\langle6, 3(t^2-t)(s^2-s), 2(t^3-t)\mid t,\,s\in R\rangle,\\
\mathrm H_2\big(\St(\rC_3,\,R),\ZZ\big)&=R/\langle t^2-t\mid t\in R\rangle,\\
\mathrm H_2\big(\St(\rD_4,\,R),\ZZ\big)&=\big(R/\langle t^2-t\mid t\in R\rangle\big)\times\big(R/\langle2,(t^2-t)(s^2-s)\mid t,s\in R\rangle\big),\\
\mathrm H_2\big(\St(\rF_4,\,R),\ZZ\big)&=R/\langle t^2-t\mid t\in R\rangle.
\end{align*}
In particular, for $\Phi=\rA_3,\rC_3,\rD_4,\rF_4$ the equality $\mathrm K_2(\Phi,\,R)=\mathrm K_2^Q(\Phi,\,R)$ holds iff $R$ does not have residue fields isomorphic to $\mathbb F_2$ and $\mathrm K_2(\mathrm B_3,\,R)=\mathrm K_2^Q(\mathrm B_3,\,R)$ iff $R$ does not have residue fields isomorphic to $\mathbb F_2$ or $\mathbb F_3$.
\end{corollary*}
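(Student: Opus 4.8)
The plan is to derive the Corollary formally from the preceding Theorem together with the Schur multiplier computations of \cite{St71,vdKStein}, the only deep input being the centrality of $\mathrm{st}$ established in \cref{SteinbergCrossedModule}. Write $\E:=\E_{\mathrm{sc}}(\Phi,R)$. Since $\Phi$ is irreducible of rank $\geq 3$, both $\E$ and $\St(\Phi,R)$ are perfect (each root subgroup is a commutator of two others via the Chevalley commutator formula, respectively the Steinberg relations). Hence $\E$ admits a universal central extension $u\colon\widetilde\E\to\E$ whose kernel is the Schur multiplier $\mathrm H_2(\E,\ZZ)$; by the identification recalled in the excerpt (see \cite[\S~IV.1]{Weibel}) this kernel is exactly $\mathrm K_2^Q(\Phi,R)$, and moreover $\widetilde\E$ is superperfect, i.e. $\mathrm H_1(\widetilde\E,\ZZ)=\mathrm H_2(\widetilde\E,\ZZ)=0$.

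Next I would exploit the universal property. By the Theorem above, $\mathrm{st}$ exhibits $\St(\Phi,R)$ as a central extension of $\E$, and $\St(\Phi,R)$ is perfect; therefore there is a unique homomorphism $\phi\colon\widetilde\E\to\St(\Phi,R)$ with $\mathrm{st}\circ\phi=u$. This $\phi$ is surjective: as $\mathrm{st}(\operatorname{im}\phi)=\operatorname{im}(u)=\E=\operatorname{im}(\mathrm{st})$ one has $\St(\Phi,R)=\operatorname{im}(\phi)\cdot\ker(\mathrm{st})$ with $\ker(\mathrm{st})=\mathrm K_2(\Phi,R)$ central, so $[\St(\Phi,R),\St(\Phi,R)]\subseteq\operatorname{im}(\phi)$ and perfectness forces $\operatorname{im}(\phi)=\St(\Phi,R)$. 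Since $\ker\phi\subseteq\ker u=\mathrm H_2(\E,\ZZ)$ is central in $\widetilde\E$, the map $\phi$ is itself a central extension of the perfect group $\St(\Phi,R)$ whose total space $\widetilde\E$ is superperfect; by the standard characterization of universal central extensions, $\phi$ \emph{is} the universal central extension of $\St(\Phi,R)$, whence $\ker\phi=\mathrm H_2(\St(\Phi,R),\ZZ)$.

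Restricting $\phi$ to kernels then produces the sequence. Because $u=\mathrm{st}\circ\phi$ we have $\ker u=\phi^{-1}(\mathrm K_2(\Phi,R))$, so $\phi$ cuts out a surjection $\mathrm K_2^Q(\Phi,R)=\mathrm H_2(\E,\ZZ)\twoheadrightarrow\mathrm K_2(\Phi,R)$ whose kernel is $\ker\phi=\mathrm H_2(\St(\Phi,R),\ZZ)$; this is exactly the claimed exact sequence $1\to\mathrm H_2(\St(\Phi,R),\ZZ)\to\mathrm K_2^Q(\Phi,R)\to\mathrm K_2(\Phi,R)\to1$. The explicit values of $\mathrm H_2(\St(\Phi,R),\ZZ)$ in the tabulated cases are then imported verbatim from \cite{St71,vdKStein}, and in the cases $\rA_{\geq 4},\rB_{\geq 4},\rC_{\geq 4},\rD_{\geq 5},\rE_{6,7,8}$ the same references give $\mathrm H_2(\St(\Phi,R),\ZZ)=0$, so there $\mathrm K_2^Q=\mathrm K_2$ unconditionally.

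Finally, the ``in particular'' clauses reduce to deciding when the listed groups vanish, an elementary residue-field computation: $R/I\neq0$ for an ideal $I$ precisely when some maximal ideal $\mathfrak m$ contains $I$, i.e. when all the generators of $I$ die in the field $k=R/\mathfrak m$. For $\Phi=\rA_3,\rC_3,\rF_4$ the condition $t^2-t\equiv0$ for all $t\in k$ forces $k\cong\mathbb F_2$, and the extra factor in the $\rD_4$ case collapses under the very same condition, giving nontriviality iff $R$ has an $\mathbb F_2$ residue field. For $\Phi=\rB_3$ the generator $6$ forces $\operatorname{char}(k)\in\{2,3\}$; in characteristic $2$ the relation $(t^2-t)(s^2-s)\equiv0$ again forces $k\cong\mathbb F_2$, while in characteristic $3$ the relation $t^3-t\equiv0$ forces $k\cong\mathbb F_3$, yielding the stated dichotomy. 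Since the centrality input is assumed, I expect no real obstacle here; the single point demanding care is the identification of $\phi$ as the universal central extension of $\St(\Phi,R)$, which hinges squarely on the superperfectness of $\widetilde\E$.
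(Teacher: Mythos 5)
Your proposal is correct and follows exactly the route the paper intends: the authors state the Corollary as an immediate consequence of the Theorem together with \cite[\S~IV.1]{Weibel} and the Schur multiplier computations of \cite{St71,vdKStein}, and your write-up supplies precisely that standard argument — the universal central extension $\widetilde{\mathrm E}\to\mathrm E_{\mathrm{sc}}(\Phi,R)$ factors through the central extension $\mathrm{st}$, the induced map $\phi$ is the universal central extension of the perfect group $\St(\Phi,R)$ by superperfectness of $\widetilde{\mathrm E}$, and restricting to kernels yields the exact sequence. The residue-field analysis in your final paragraph is likewise the intended elementary verification, so there is nothing to correct.
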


Since the proof of the main theorem is rather long and technical and the setting of the papers~\cite{Vor1,Vor2} is already rather general, let us briefly sketch the key ideas of our proof assuming, for simplicity, that the root system $\Phi$ is simply-laced and the ring $R$ is an integral domain.

The proof is based on a variant of Quillen--Suslin's local-global principle.
Consider an arbitrary element $g$ from $\mathrm K_{2}(\Phi, R)$. We want to show that $g$ commutes with $x_{\alpha}(r)$ for every $r\in R$ and $\alpha\in\Phi$.
For $\alpha \in \Phi$ consider the set \[I_\alpha=\{a\in R\mid [g,\,x_{\alpha}(aR)]=1\},\] which is easily seen to be, in fact, an ideal.
To prove the centrality of $\mathrm{K}_{2}(\Phi, R)$ it suffices to show that $I_\alpha=R$ for all $\alpha\in \Phi$.
Assume the contrary and consider an arbitrary maximal ideal $M \trianglelefteq R$ containing $I_\alpha$ for some $\alpha$. 
By the centrality of $\mathrm K_{2}(\Phi, R_M)$ one has the identity $[\lambda_{M}(g),\,x_{\alpha}(r)]=1$ for all $r\in R_{M}$ (we denote by $\lambda_M$ the localization homomorphism $R \to R_M$). 
If we were able to show for some $s_0\in R\setminus M$ that the identity \begin{equation} \label{intro} [g,\,x_{\alpha}(s_0R)]=1, \end{equation} holds in $\St(\Phi, R)$ or, in other words, that $s_0\in I_\alpha\cap R\setminus M$, this would lead to a contradiction and hence would complete the proof of the centrality of $\mathrm K_{2}(\Phi, R)$. Unfortunately, formally proving this is not easy. 

In the earlier papers the proof went as follows; cf.~\cite{Tul,Sin,LavSin,Lavlgp}.
First, a formal variable was thrown in and the original question was reduced to the question of the triviality of $[g, x_{\alpha}(t)]$ in $\St(\Phi, R[t])$, which, in turn, was further reduced to proving the {\it Dilation principle}, an assertion about principal localizations similar to~\eqref{intro} (see e.\,g.~\cite[Lemma~15]{Sin}). The proof of the Dilation principle was based on the isomorphism of {\it relative Steinberg groups} $\St(\Phi, R_a[t], tR_a[t]) \cong \St(\Phi, R \ltimes tR_a[t], tR_a[t])$; cf.~\cite[Remark~3.12]{Sin}. Essentially this isomorphism means that it is possible to define a conjugation action of $\St(\Phi, R_a)$ on the group $\St(\Phi, R\ltimes tR_a[t], tR_a[t])$.
This last step, however, required finding nontrivial ad hoc presentations of relative Steinberg groups.
For example~\cite{Lavlgp} relied on a variant of van der Kallen's ``another presentation''~\cite{vdK} for symplectic groups, while~\cite{Sin, LavSin} were based on the amalgamation theorem for relative Steinberg groups; see~\cite[Theorem~9]{Sin}. However, it was not easy to transfer to all Chevalley groups either of these two approaches.

A remarkable idea of the third-named author was to abandon relative Steinberg groups completely and instead work with sequences of {\it unrelativized} Steinberg groups $\St(\Phi, a^nR)$, $n \in \mathbb N$ given by the usual Steinberg presentations.
It turns out that it is possible to naturally define the ``conjugation'' action of the Steinberg group $\St(\Phi, R_a)$ on such a sequence. Naively, for $g \in \St(\Phi, R_a)$ the automorphism of conjugation by $g$ corresponds to a collection of group homomorphisms \[\{\St(\Phi, a^{c^*(m)}R) \to \St(\Phi, a^{m}R)\}_{m \in \mathbb N },\] where $c^*\colon \mathbb N \to \mathbb N$ is some function depending on $g$.

In fact, it is possible to even further generalize this construction to arbitrary localizations, not just principal ones.
Indeed, if $S \subseteq R$ is an arbitrary multiplicative subset of a domain $R$, then one can consider the following diagram.
Its objects are unrelativized Steinberg groups $\St(\Phi, sR)$, $s \in S$, while its arrows are the homomorphisms
\[ \St(\Phi, s_2R) \to \St(\Phi, s_1R),\ x_\alpha(a) \mapsto x_\alpha(a), \]
which may exist, obviously, only if $s_1$ divides $s_2$. 
The above diagram is called the {\it Steinberg pro-group} and its arrows are called the {\it structure morphisms}. 
The key claim is that it is possible to define an action of $\St(\Phi, S^{-1}R)$ on the Steinberg pro-group. 

For each $x_{\alpha}(a/s)$, $a\in R$, $s\in S = R\setminus M$ the automorphism of conjugation by $x_\alpha(a/s)$ corresponds to a collection of group homomorphisms \begin{equation} \label{eq:collection} \{\mathrm{conj}_{x_\alpha(a/s)}(t) \colon  \St(\Phi, c^*(t)R) \to \St(\Phi, tR)\}_{t \in S},\end{equation} where $c^*\colon S \to S$ is the function $t \mapsto ts$, called the {\it index function}. The construction of the homomorphisms $\mathrm{conj}_{x_\alpha(a/s)}(t)$ will be outlined below.

In fact, a morphism between pro-groups should be defined not just as a collection of group homomorphisms, but as an {\it equivalence class} of such collections. By definition, two pro-group morphisms $m_1$ and $m_2$ with index functions $m_1^*$,$m_2^* \colon S \to S$ are equivalent if for every $t$ one can choose $s$ divisible by both $m_1^*(t)$ and $m_2^*(t)$ so that the homomorphisms \[m_i(t) \colon \St(\Phi, m_i^*(t)R) \to \St(\Phi, tR),\ i=1,2\] coincide after precomposition with the structure morphisms \[\St(\Phi, sR) \to \St(\Phi, m^*_i(t)R),\ i=1,2.\]
We refer the reader to~\cref{sec:pro-obj} for a more thorough treatment of the formalism of pro-groups.

After the automorphisms of conjugation by $x_\alpha(a/s)$ are defined, we verify that they satisfy the Steinberg relations defining $\St(\Phi, S^{-1}R)$ and, moreover, that the Steinberg symbols $\{b_1, b_2\}$, $b_1, b_2 \in {S^{-1}R}^\times$ act trivially on the pro-group. Technically, this is an easier step since it is enough to verify the corresponding equalities on suitable group generators.
Now, for $g = \prod x_{\alpha_i}(a_i)$ the pro-group automorphism $\mathrm{conj}_{\lambda_M(g)}$ can be defined as the composition of $\mathrm{conj}_{x_{\alpha_i}(a_i/1)}$.
A key point here is that the resulting pro-group automorphism is compatible with the usual conjugation action of $g \in \St(\Phi, R)$, i.\,e. for some $s_0 \in S$ the group homomorphism \[\mathrm{conj}_{\lambda_M(g)}(1) \colon \St(\Phi, s_0R) \to \St(\Phi, R)\] coincides with the obvious homomorphism $h \mapsto ghg^{-1}$. Since our conjugation action factorizes through $\St(\Phi, S^{-1}R)$ we conclude that the conjugation by $g \in \mathrm K_2(\Phi, R)$ acts trivially on $x_\alpha(s_0R)$, which finishes the proof of~\eqref{intro}. This part of the proof is formalized in~\cref{sec:local-action,sec:proof-main}.

Now let us focus on the construction of the action of $\St(\Phi, S^{-1}R)$ on the Steinberg pro-group. In order to construct the automorphism of conjugation by $x_{\alpha}(a/s)$ we need to introduce the group $\St(\mathrm \Phi\setminus\{-\alpha\}, tR)$ whose presentation is obtained from the standard presentation of the Steinberg group $\St(\Phi, tR)$ by omitting all the generators $x_{-\alpha}(tr)$, $r \in R$ and all the relations involving such generators. Now the analogues of the homomorphisms~\eqref{eq:collection} between such groups can be constructed fairly easily based on the remaining part of the Steinberg presentation (below $N$ denotes a sufficiently large natural number):
\begin{equation} \label{eq:conj-def} x_\beta(s^Nb) \mapsto \left\{\begin{array}{ll} x_{\alpha+\beta}(N_{\alpha,\beta} s^{N-1}ab) x_\beta(s^Nb) & \text{if $\alpha+\beta\in\Phi$,}\\ x_\beta(s^Nb) & \text{if $\alpha+\beta\not\in\Phi\cup \{0\}$.}\end{array}\right. \end{equation}
Notice that there is no simple expression similar to the right-hand side of~\eqref{eq:conj-def} in the case $\beta=-\alpha$, which explains our need to use groups $\St(\Phi\setminus\{-\alpha\}, tR)$ as domains for the homomorphisms~\eqref{eq:collection}.
If follows from~\cref{endomor-elim}) that~\eqref{eq:conj-def} defines a collection of well-defined homomorphisms:
\[\mathrm{conj}'_{x_\alpha(r/s)}(t)\colon \St(\Phi\setminus\{-\alpha\}, s^N tR) \to \St(\Phi\setminus\{-\alpha\}, tR).\]

The obvious defect of the formula~\eqref{eq:conj-def} is that its correctness depends on the integrality of $R$.
However, it is easy to fix this using the notion of a {\it homotope of a ring} introduced in~\cite{Vor1,Vor2}. By definition, for $t \in R$ the {\it $t$-homotope} $R^{(t)}$ of $R$ is the ring without unit, multiplication in which is given by the formula $x*y=txy$. The homotope $R^{(t)}$ is isomorphic to $tR$ as a ring without unit if $R$ happens to be a domain. 
Now, replacing everywhere above the group $\St(\Phi, tR)$ with $\St(\Phi, R^{(t)})$, one can formulate the analogue of~\eqref{eq:conj-def} for not necessarily integral rings.
We believe that the proof which works for arbitrary commutative rings has extra elegance as compared to the proof which first reduces to the case of domains. However, the reader not comfortable with the notion of a ring homotope may follow the latter path and think of $R^{(t)}$ as just a synonym for $tR$; see~\cref{no-homotopes} for an outline of such a proof.

Although, the groups $\St(\Phi, tR)$ and $\St(\Phi\setminus\{-\alpha\}, tR)$ need not be isomorphic in general, the pro-groups composed of them turn out to be isomorphic in the category of pro-groups, see~\cref{SingleRootElimination}. Notice also that the corresponding assertion for unital rings, i.\,e. the isomorphism $\St(\Phi, R) \cong \St(\Phi\setminus\{-\alpha\}, R)$ is an easy corollary of a much stronger result called {\it Curtis--Tits} presentation, which allows one to omit most of the Steinberg generators from the presentation of $\St(\Phi, R)$; see e.\,g.~\cite[Corollary~1.3]{A13}. In fact, the main difference of the present paper from~\cite{Vor1, Vor2} is that our proof is based on Steinberg presentations omitting roots, while~\cite{Vor1, Vor2} rely on computations with relative root systems. 

If written in down-to-earth terms, the aforementioned isomorphism of pro-groups amounts to the following collection of group homomorphisms in the ``non-obvious'' direction ($M$ is a sufficiently large natural number):
\[\{F_\alpha(t)\colon \St(\Phi, t^{M}R)\rightarrow \St(\Phi\setminus\{-\alpha\}, tR) \}_{t\in S}.\]
Of course, the natural choice for the image of $x_\alpha(t^Mr)$ would be the commutator $[x_{\alpha-\beta}(t^{M-1}b),\,x_{\beta}(\pm t)]$ for some $\beta \in \Phi$ such that $\alpha-\beta\in \Phi$.
The fact that this definition does not depend on the choice of $\beta$ can be checked using the Hall--Witt identity~\eqref{eq:HW} (notice that we can further decompose $x_{\alpha-\beta}(t^{M-1}b)$ into a commutator in $\St(\Phi, tR)$).
Similarly, we can verify that these commutators satisfy the relations missing from the presentation of $\St(\Phi\setminus\{-\alpha\}, t^MR)$. Keeping track of index functions and individual homomorphisms $F_\alpha(t)$ in such computations, however, is not very convenient, so, to simplify the proof of~\cref{SingleRootElimination}, we prefer to write down all commutator formula calculations inside the category of pro-groups. In addition, to reduce the number of cases that need to be considered, we also assume throughout the paper that $\Phi$ is not of type $\rB_\ell$ or $\rC_\ell$. The latter assumption makes it possible to decompose any root of $\Phi$ into a sum of roots having the same length. Now, composing $F_\alpha(t)$ and $\mathrm{conj}'_{x_\alpha(r/s)}(t)$ in a suitable way we obtain the sought pro-group morphism $\mathrm{conj}_{x_\alpha(r/s)}$.

\subsection{Acknowledgements}
The authors of the paper would like to express their sincere gratitude to the anonymous referee for a careful reading of the first version of this paper and numerous helpful suggestions.

The authors of the paper were supported by ``Native towns'', a social investment program of PJSC ``Gazprom Neft''. The first-named author was also supported by the Ministry of Science and Higher Education of the Russian Federation, agreement No. 075-15-2019-1619. The second-named author was also supported by RFBR grant No. 18-31-20044. 
The second- and third-named authors were also supported by the Foundation for the Advancement of Theoretical Physics and Mathematics ``BASIS``. The first-named author is a winner of Young Russian Mathematics contest and would like to thank its sponsors and the jury.

\section{Preliminaries}
Throughout this paper all commutators are left normed, i.\,e. $[x, y] = xyx^{-1}y^{-1}$. We denote by $x^y$ and $\up{y}x$ the elements $xyx^{-1}$ and $yxy^{-1}$.
In this paper we also make use of the following commutator identities:
\begin{align}
\label{eq:comm-mult-rhs}[x,\ yz]& =   [x,\ y] \cdot \up{y}[x,\ z],
\\ \label{eq:comm-mult-lhs}[xy,\ z]& = \up{x}[y,\ z] \cdot [x,\ z],
\end{align}
Recall that the Hall--Witt identity asserts that
\begin{equation} \label{eq:HW} [[x,\,y],\,\up{y} z] \cdot [[y,\,z],\,\up{z} x] \cdot [[z,\,x],\,\up{x} y] = 1. \end{equation}
As a corollary we obtain
\begin{equation} \label{eq:HW-corr} [x,z] = 1 \text{ implies } [x, [y,z]] = [[x,y],\up{y}z].
\end{equation}
Finally, it is not hard to check that
\begin{equation} \label{eq:comm-new}
[x, y] = 1 \text{ implies } [x, [y, z]] = [y, \up xz]\, [z, y].
\end{equation}

\subsection{Generalities on pro-objects} \label{sec:pro-obj}
Let \(\mathcal C\) be an arbitrary category.
In this section we recall the construction of the pro-completion of \(\mathcal C\) (cf. \cite[Section~6.1]{SK06}).

Recall that a nonempty small category \(\mathcal I\) is called {\it filtered} if
\begin{itemize}
 \item for any two objects \(i, k \in Ob(\mathcal{I})\) there is a diagram \(i \to j \leftarrow k\) in \(\mathcal I\) (i.\,e. $i$ and $k$ have an upper bound);
 \item every two parallel morphisms \(i \rightrightarrows j\) are equalized by some morphism \(j \to k\) in \(\mathcal I\).
\end{itemize}
A {\it pro-object} in \(\mathcal C\) is, by definition, a functor $X^{(\infty)}\colon \mathcal{I}_X^{\op} \to \mathcal{C}$, i.\,e. a contravariant functor from a filtered category \(\mathcal I_X\), called {\it the category of indices of $X^{(\infty)}$}, to the category \(\mathcal C\). 

We denote by \(X^{(i)}\) the value of \(X^{(\infty)}\) on an index \(i\).
The values of the functor $X^{(\infty)}$ on the arrows of $\mathcal{I}$ are called the {\it structure morphisms} of $X^{(\infty)}$.

The category of pro-objects is denoted by \(\Pro(\mathcal C)\). The hom-sets in this category are given by the formula
\begin{equation} \label{eq:pro-c-hom} \Pro(\mathcal C)(X^{(\infty)}, Y^{(\infty)}) = \varprojlim_{j \in \mathcal I_Y} \varinjlim_{i \in \mathcal I_X} \mathcal C(X^{(i)}, Y^{(j)}). \end{equation}
Let us recall a more explicit description of morphisms in \(\Pro(\mathcal C)\). 
By definition, a {\it pre-morphism} \(f \colon X^{(\infty)} \to Y^{(\infty)}\) consists of the following data:
\begin{itemize}
\item a set-theoretic function \(f^* \colon Ob(\mathcal{I_Y}) \to Ob(\mathcal{I_X})\);
\item a collection of morphisms \(f^{(i)} \colon X^{(f^*(i))} \to Y^{(i)}\) in $\mathcal{C}$ parametrized by $i \in Ob(\mathcal{I_Y})$. These morphisms are required to satisfy the following additional assumption: for every morphism \(i \to j\) in \(\mathcal I_Y\) there exists a sufficiently large index \(k \in Ob(\mathcal{I_X})\) such that the composite morphisms \(X^{(k)} \to X^{(f^*(i))} \to Y^{(i)}\) and \(X^{(k)} \to X^{(f^*(j))} \to Y^{(j)} \to Y^{(i)}\) are equal. \end{itemize}
The composition of two pre-morphisms \(f \colon X^{(\infty)} \to Y^{(\infty)}\) and \(g \colon Y^{(\infty)} \to Z^{(\infty)}\) is defined as the pre-morphism \(g \circ f\), where \((g \circ f)^*(i) = f^*(g^*(i))\) and \((g \circ f)^{(i)} = g^{(i)} \circ f^{(g^*(i))}\).

Two parallel pre-morphisms \(f, g \colon X^{(\infty)} \to Y^{(\infty)}\) are called equivalent if for every \(i \in Ob(\mathcal I_Y)\) there exists a sufficiently large index \(j \in Ob(\mathcal I_X)\) such that the composite morphisms \(X^{(j)} \to X^{(f^*(i))} \to Y^{(i)}\) and \(X^{(j)} \to X^{(g^*(i))} \to Y^{(i)}\) are equal. Finally, a {\it morphism} \(X^{(\infty)} \to Y^{(\infty)}\) is an equivalence class of pre-morphisms. Note that the equivalence relation is preserved by the composition operation.

There is a fully faithful functor $\mathcal{C} \to \Pro(\mathcal{C})$ sending \(X \in Ob(\mathcal C)\) to the pro-object $X \colon \mathbf{1}^\op \to \mathcal{C}$. It is clear from~\eqref{eq:pro-c-hom}  that
\begin{align}
 \Pro(\mathcal C)(X, Y) &\cong \mathcal C(X, Y);\\
 \Pro(\mathcal C)(X^{(\infty)}, Y) &\cong \varinjlim_{i \in \mathcal I_X} \mathcal C(X^{(i)}, Y); \label{eq:pro-c-hom-2}\\
 \Pro(\mathcal C)(X, Y^{(\infty)}) &\cong \varprojlim_{i \in \mathcal I_Y} \mathcal C(X, Y^{(i)}).
\end{align}

Moreover, it is also clear from~\eqref{eq:pro-c-hom} and~\eqref{eq:pro-c-hom-2} that the following assertion holds.
\begin{lemma} \label{lem:proobj-is-a-limit}
  \(X^{(\infty)}\) is the projective limit of \(X^{(i)}\) in the category \(\Pro(\mathcal C).\)
\end{lemma}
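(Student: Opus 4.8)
The plan is to verify the universal property of the projective limit via the representable-functor (Yoneda) criterion, rather than by manipulating cones by hand. Write $p^{(i)}\colon X^{(\infty)}\to X^{(i)}$ for the canonical projection, namely the pro-morphism corresponding under~\eqref{eq:pro-c-hom-2} (with constant target $X^{(i)}$) to the class of $\mathrm{id}_{X^{(i)}}\in\mathcal C(X^{(i)},X^{(i)})$; as a pre-morphism it has index function picking out $i$ and component $\mathrm{id}_{X^{(i)}}$. I would show that for every pro-object $Y^{(\infty)}$, postcomposition with the $p^{(i)}$ induces a bijection
\[
\Pro(\mathcal C)\bigl(Y^{(\infty)}, X^{(\infty)}\bigr) \;\xrightarrow{\ \sim\ }\; \varprojlim_{i\in\mathcal I_X}\Pro(\mathcal C)\bigl(Y^{(\infty)}, X^{(i)}\bigr),
\]
natural in $Y^{(\infty)}$. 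Since this asserts exactly that $X^{(\infty)}$, together with the cone $(p^{(i)})_i$, represents the functor $Y^{(\infty)}\mapsto \varprojlim_i\Pro(\mathcal C)(Y^{(\infty)},X^{(i)})$, it is equivalent to the statement $X^{(\infty)}=\varprojlim_i X^{(i)}$.

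First I would check that the $p^{(i)}$ form a cone over the diagram $i\mapsto X^{(i)}$, viewed as a functor $\mathcal I_X^{\op}\to\Pro(\mathcal C)$ through the fully faithful embedding $\mathcal C\hookrightarrow\Pro(\mathcal C)$: for an arrow $i\to j$ in $\mathcal I_X$ the associated structure morphism $X^{(j)}\to X^{(i)}$, regarded as a morphism of constant pro-objects, satisfies $p^{(i)}=(\text{structure morphism})\circ p^{(j)}$. This is immediate from the definition of the structure morphisms and the composition rule for pre-morphisms, and poses no difficulty.

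The heart of the argument is the computation of the left-hand side. Taking $Y^{(\infty)}$ with index category $\mathcal I_Y$ and applying~\eqref{eq:pro-c-hom} gives
\[
\Pro(\mathcal C)\bigl(Y^{(\infty)},X^{(\infty)}\bigr) \;=\; \varprojlim_{i\in\mathcal I_X}\;\varinjlim_{k\in\mathcal I_Y}\,\mathcal C\bigl(Y^{(k)},X^{(i)}\bigr).
\]
By~\eqref{eq:pro-c-hom-2}, applied with the constant target $X^{(i)}$, the inner colimit is canonically identified with $\Pro(\mathcal C)(Y^{(\infty)},X^{(i)})$; substituting this identification into the display yields precisely $\varprojlim_{i\in\mathcal I_X}\Pro(\mathcal C)(Y^{(\infty)},X^{(i)})$. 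All identifications used are natural in $Y^{(\infty)}$, so the resulting bijection is natural, which completes the argument.

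The step I expect to require the most care is the bookkeeping of variance. One must confirm that the outer projective limit in~\eqref{eq:pro-c-hom}, indexed by $i\in\mathcal I_X$, is genuinely the limit of the inverse system $i\mapsto X^{(i)}$ regarded as a functor on $\mathcal I_X^{\op}$, with transition maps induced by the structure morphisms of $X^{(\infty)}$; this is what makes the two outer limits literally the same diagram and not merely formally similar. One must also check that, under these identifications, the comparison map $\Pro(\mathcal C)(Y^{(\infty)},X^{(\infty)})\to\varprojlim_i\Pro(\mathcal C)(Y^{(\infty)},X^{(i)})$ is the one induced by postcomposition with the cone $(p^{(i)})_i$ rather than some unrelated map, which is where the explicit descriptions of pre-morphisms and of the embedding $\mathcal C\hookrightarrow\Pro(\mathcal C)$ enter. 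Everything else is formal.
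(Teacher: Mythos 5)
Your proof is correct and takes essentially the same route as the paper, which disposes of this lemma with the one-line observation that it is clear from~\eqref{eq:pro-c-hom} and~\eqref{eq:pro-c-hom-2}: the identification \(\Pro(\mathcal C)\bigl(Y^{(\infty)},X^{(\infty)}\bigr)=\varprojlim_{i}\varinjlim_{k}\mathcal C\bigl(Y^{(k)},X^{(i)}\bigr)=\varprojlim_{i}\Pro(\mathcal C)\bigl(Y^{(\infty)},X^{(i)}\bigr)\), natural in \(Y^{(\infty)}\), is exactly the intended argument. Your additional bookkeeping --- exhibiting the canonical cone \((p^{(i)})_i\), checking compatibility with the structure morphisms, and verifying that the comparison bijection is induced by postcomposition with that cone --- simply makes explicit what the paper leaves implicit, and is carried out correctly.
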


The category of pro-sets \(\Pro(\Set)\) has all finite limits by \cite[Prop.~6.1.18]{SK06} and therefore is a cartesian monoidal category.

Let us describe an explicit construction of limits in one important special case. Let $X^{(\infty)}$, $Y^{(\infty)}\colon\mathcal{I}^{\op}\to\Set$ be a pair of pro-sets with the same index category. Recall that the pointwise product of functors $X^{(\infty)}\times Y^{(\infty)}\colon\mathcal{I}^{\op}\to\Set$ is given by $(X^{(\infty)} \times Y^{(\infty)})^{(i)} = X^{(i)} \times Y^{(i)}$. Clearly, $X^{(\infty)} \times Y^{(\infty)}$ is a product of $X^{(\infty)}$ and $Y^{(\infty)}$ in the category $\Fun(\mathcal{I}^{\op}, {\Set})$. 

There is an obvious identity-on-objects functor $\Fun(\mathcal{I}^\op, \Set) \to \Pro(\Set)$ sending a natural transformation $\varphi \colon X^{(\infty)} \to Y^{(\infty)}$ to the morphism of pro-sets given by the pre-morphism $\varphi^* = \mathrm{id}_{Ob(\mathcal{I})}$, $\varphi^{(i)} = \varphi_i$. The diagonal morphism $\Delta \colon X^{(\infty)} \to X^{(\infty)} \times X^{(\infty)}$ and the canonical projection morphisms \(\pi_X\colon X^{(\infty)} \times Y^{(\infty)} \to X^{(\infty)}, \pi_Y\colon X^{(\infty)} \times Y^{(\infty)} \to Y^{(\infty)}\) can be defined in the category $\Pro(\Set)$ as the images of the corresponding natural transformations in $\Fun(\mathcal{I}^\op, \Set)$ under this functor.

We claim that the point-wise product $X^{(\infty)} \times Y^{(\infty)}$ satisfies the universal property of products in the category $\Pro(\Set)$. Indeed, this follows from~\eqref{eq:pro-c-hom} and the fact that filtered colimits commute with finite limits. This argument also shows that arbitrary finite limits of pro-sets with the same index category can be computed pointwise.

\subsection{Group and ring objects in pro-sets} \label{sec:group-objects}
In this paper we consider only commutative rings.
To distinguish between unital and non-unital rings we reserve the word ``ring'' only for unital rings and refer to non-unital rings as {\it rngs}.
We denote the category of rings (resp. rngs) as $\textbf{Ring}$ (resp. $\textbf{Rng}$).

Let $T$ be an algebraic theory. Throughout this paper we will be mostly interested in the case where $T$ is the theory of groups or the theory of rngs.

Since the category $\Pro(\Set)$ is cartesian monoidal, we may speak of the category $\Mod(T, \Pro(\Set))$ of models of $T$ in \(\Pro(\Set)\).
In the special case where $T$ is the theory of groups (resp. rngs) this category is precisely the category of group (resp. rng) objects in $\Pro(\Set)$.

The forgetful functor $F$ from the category of pro-groups \(\Pro(\Group)\) to the category of pro-sets \(\Pro(\Set)\) is faithful. Every pro-group $G^{(\infty)}$ defines the structure of a group object in \(\Pro(\Set)\) on $F\left(G^{(\infty)}\right)$. It is easy to see that a morphism \(f \in \Pro(\Set)\left(F\left(G^{(\infty)}\right), F\left(H^{(\infty)}\right)\right)\) comes from \(\Pro(\Group)\) if and only if it is a morphism of group objects. Indeed, if \(f \colon F(G^{(\infty)}) \to F(H^{(\infty)})\) is a pre-morphism and the corresponding morphism preserves the multiplication in the sense of group objects, then the maps \(f^{(i)} \colon G^{(f^*(i))} \to H^{(i)}\) are group homomorphisms after a restriction to \(G^{(j)}\) for sufficiently large \(j\). Thus, $\Pro(\Group)$ is equivalent to a full subcategory of the category of group objects in $\Pro(\Set)$.

\begin{df} \label{df-pro-set-morphisms} 
 Let $n$ be a natural number and $F_{n, T}$ be the free algebra on $n$ generators in the theory $T$.
 If $T$ is the theory of groups, then $F_n$ is the free group $F(t_1,\ldots, t_n)$.
 Similarly, if $T$ is the theory of rngs, $F_{n, T}$ is the sub-rng of the ring $\mathbb{Z}[t_1,\ldots, t_n]$ consisting of polynomials over $\mathbb{Z}$ without free terms.
 
 Given a word $w \in F_{n, T}$ and a $T$-model $X^{(\infty)}$ in $\Pro(\Set)$, one can construct the $\Pro(\Set)$-morphism
 \[ w^{(\infty)} \colon \underbrace{X^{(\infty)} \times \ldots \times X^{(\infty)}}_{n\text{ times}} \to X^{(\infty)}, \]
 which ``interprets'' the word $w$. This morphism can be obtained as an appropriate composition of diagonal morphisms $\Delta_{X^{(\infty)}}$, projections $\pi_i$ 
 (which are part of the structure of a cartesian monoidal category on $\Pro(\Set)$) 
 and the morphisms defining the structure of a $T$-model on $X^{(\infty)}$.
 
 In order to simplify the notation, to denote the interpretation morphism $w^{(\infty)}$ we often use the original term for $w$, in which every symbol of a free variable $t_i$ is replaced by the symbol $t_i^{(\infty)}$.
\end{df}

\begin{example}\label{example-commutator}
Consider the case where $T$ is the theory of groups.
Let $G^{(\infty)}$ be a group object in $\Pro(\Set)$ and $w = [t_1, t_2] = t_1 t_2 t_1^{-1} t_2^{-1} \in F(t_1, t_2)$ be the word representing the generic commutator.
In this case the interpretation morphism $w^{(\infty)}$ (also denoted $[t_1^{(\infty)}, t_2^{(\infty)}]$) can be defined as the composition
 \[ \xymatrix{G^{(\infty)} \times G^{(\infty)} \ar[r]^(.35){\Delta \times \Delta} & G^{(\infty)} \times G^{(\infty)} \times G^{(\infty)} \times G^{(\infty)} \ar[d]_{\langle \pi_1, \pi_3, i\pi_2, i\pi_4 \rangle} & \\
    & G^{(\infty)} \times G^{(\infty)} \times G^{(\infty)} \times G^{(\infty)} \ar[r]^(.76){m(m\times m)} & G^{(\infty)},} \]
where the structure of a group object on $G^{(\infty)}$ is given by the triple $(m, i, e)$.
\end{example}

\subsection{Homotopes of rings and pro-rings} \label{sec:rng-homotopes}
For the rest of the paper $R$ denotes an arbitrary ring and \(S\) denotes a fixed multiplicative subset of $R$ containing a unit. Denote by $\mathcal{S}$ the category, whose objects are the elements of \(S\) and whose morphisms \(\mathcal{S}(s, s')\) are all \(s'' \in S\) such that \(ss'' = s'\). The composition and the identity morphisms are induced by the ring structure on $R$. It is clear that $\mathcal{S}$ is a filtered category. Unless stated otherwise, all the pro-sets that we encounter in the sequel have \(\mathcal S\) as their category of indices.

We introduce the notion of a {\it homotope} of a ring inspired by a similar notion from nonassociative algebra.
\begin{df} \label{ring-homotope}
 Let $s$ be an element of $R$.  
 By definition, the {\it \(s\)-homotope} of \(R\) is the rng \(R^{(s)} = \{a^{(s)} \mid a \in R\}\) with the operations of addition and multiplication given by
 \[ a^{(s)} + b^{(s)} = (a + b)^{(s)},\ \ a^{(s)} b^{(s)} = (asb)^{(s)},\ a, b\in R.\]
 Clearly, $R^{(s)}$ has the structure of an \(R\)-algebra given by the formula \[a \cdot b^{(s)} = (ab)^{(s)},\ a, b \in R.\] For \(s, s' \in S\) there is a homomorphism of \(R\)-algebras \[R^{(ss')} \to R^{(s')}, a^{(ss')} \mapsto (as)^{(s')}.\]
 Denote by \(R^{(\infty)}\) the formal projective limit of the projective system \(R^{(s)}\), where \(s \in Ob(\mathcal S)\).
 Thus, $R^{(\infty)}$ is an object of the category $\Pro(\Rng)$. 
 It can also be considered as an rng object in \(\Pro(\Set)\).  
\end{df}

Notice that every pro-rng $R^{(\infty)}$ can be considered as a pro-group by forgetting its multiplicative structure pointwise.

\begin{rem}\label{rem:prorings-comment}
 Denote by $sR$ the principal ideal of $R$ generated by $s$. 
 There is an rng homomorphism $R^{(s)} \to sR$ given by $r^{(s)}\mapsto sr$.
 In the case where $R$ is an integral domain this homomorphism is easily seen to be an rng isomorphism. 
 
 The reason why we use homotopes rather than principal ideals in the definition of $R^{(\infty)}$ is that
 there is always a ``division by $s$'' homomorphism of $R$-modules $R^{(ss')} \to R^{(s')}$ given by $r^{(ss')} \mapsto r^{(s')},$
 while the similar map $ss'R \to s'R$ may not exist if $R$ does not happen to be a domain.
 The existence of this division homomorphism will be important in~\cref{sec:local-action}.
 
 Notice also that the projective limit of $R^{(s)}$ in $\Rng$ is often trivial.
 Indeed, if $R$ is a domain, then the limit of $R^{(s)}$ in $\Rng$ computes the intersection of the principal ideals $\bigcap_{s\in S} sR$, which often coincides with the zero rng. 
 This also shows that the set of {\it global elements} of $R^{(\infty)}$ (i.\,e. the hom-set in $\Pro(\Set)$ from the terminal object $1$ to $R^{(\infty)}$) is often trivial and therefore will be of little interest to us.
\end{rem}

Recall that a morphism \(f \in \mathcal C(X, Y)\) is called a split epimorphism (or a retraction) if it admits a section,
 i.\,e. there exists $g \in \mathcal{C}(Y, X)$ such that $fg = \mathrm{id}_{Y}$. Retractions are preserved under pullbacks.

\begin{lemma}\label{RingGeneration}
The rng multiplication morphism $m \colon R^{(\infty)} \times R^{(\infty)} \to R^{(\infty)}$ is a split epimorphism of pro-sets.
\end{lemma}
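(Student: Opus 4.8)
The plan is to produce an explicit section $g\colon R^{(\infty)}\to R^{(\infty)}\times R^{(\infty)}$ of $m$ in $\Pro(\Set)$. Since $R^{(\infty)}$ and $R^{(\infty)}\times R^{(\infty)}$ share the index category $\mathcal S$, such a $g$ is given by an index function $g^*\colon S\to S$ together with a coherent family of maps $g^{(s)}=\langle\alpha^{(s)},\beta^{(s)}\rangle\colon R^{(g^*(s))}\to R^{(s)}\times R^{(s)}$. As $m\circ g$ is the pointwise homotope product of the two coordinates, the task is to express the identity endomorphism of $R^{(\infty)}$ as a product $\alpha\cdot\beta$ of two pro-set endomorphisms of $R^{(\infty)}$.

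First I would record why the obvious guess fails. Taking $g^*(s)=s^2$ and sending $c^{(s^2)}\mapsto(c^{(s)},1^{(s)})$, where the first coordinate is the division homomorphism $R^{(s^2)}\to R^{(s)}$ of \cref{rem:prorings-comment}, one computes $m^{(s)}(c^{(s)},1^{(s)})=(cs)^{(s)}$, which is exactly the structure morphism $R^{(s^2)}\to R^{(s)}$ and hence represents $\mathrm{id}_{R^{(\infty)}}$. However the constant second coordinate is not a pre-morphism: the structure morphisms of $R^{(\infty)}$ act by multiplication by the index, so $1^{(ds)}$ is carried to $d^{(s)}$ rather than to $1^{(s)}$. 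Equivalently, $R^{(\infty)}$ has no global multiplicative unit (cf.\ \cref{rem:prorings-comment}), so no section can have a constant coordinate: both $\alpha$ and $\beta$ must be genuinely non-constant and individually coherent.

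The remedy is to split the scaling symmetrically between the two coordinates. Keeping $g^*(s)=s^2$, I would look for functions $A,B\colon R\to R$ such that
\begin{equation*}
A(c)\,B(c)=c\quad\text{and}\quad A(cd^2)=d\,A(c),\ B(cd^2)=d\,B(c)\ \ (d\in S),
\end{equation*}
and set $\alpha^{(s)}(c^{(s^2)})=A(c)^{(s)}$, $\beta^{(s)}(c^{(s^2)})=B(c)^{(s)}$. The first identity yields $m^{(s)}(\alpha^{(s)}(c),\beta^{(s)}(c))=(s\,A(c)B(c))^{(s)}=(cs)^{(s)}$, so $m\circ g$ is the structure-morphism family and therefore equals $\mathrm{id}_{R^{(\infty)}}$; the two equivariance identities are precisely the compatibilities making $\alpha$ and $\beta$ into pre-morphisms, since the factor $d$ produced by a structure morphism is absorbed by the $S$-equivariance of $A$ and $B$.

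The whole difficulty is thus concentrated in constructing the equivariant splitting $(A,B)$ for an arbitrary commutative ring, and this is the step I expect to be the main obstacle. When $R$ is a domain — the situation emphasised in the introduction — one may simply halve, between $A(c)$ and $B(c)$, the largest power of the relevant element of $S$ dividing $c$, after which both displayed identities are immediate. For a general $R$ the obstruction is the possible presence of zero divisors in $S$, which prevents a naive orbit-by-orbit definition of $A,B$ from being well defined; here one should instead carry out the splitting directly on the homotopes, using the division homomorphisms of \cref{rem:prorings-comment} together with the freedom to pass to a sufficiently deep index of $\mathcal S$ in order to verify coherence. Once $(A,B)$ is in hand, the remaining checks that $g$ is a pre-morphism and that $m\circ g=\mathrm{id}_{R^{(\infty)}}$ are routine.
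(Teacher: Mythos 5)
You never construct the section, and that construction is the entire content of the lemma. Your reduction — a section of $m$ amounts to two individually coherent pro-set endomorphisms whose pointwise homotope product represents $\mathrm{id}_{R^{(\infty)}}$, which with $g^*(s)=s^2$ comes down to set maps $A,B$ satisfying $A(c)B(c)=c$ and $A(cd^2)=d\,A(c)$, $B(cd^2)=d\,B(c)$ — is sound, but the existence of such a pair for arbitrary $(R,S)$ is then exactly the assertion to be proved, and you explicitly defer it (``the main obstacle''). Moreover, even your domain sketch does not work for a general multiplicative set: if $S$ contains a unit $d\neq 1$ with $d^2=1$ (e.g.\ $-1\in S$), then $A(c)=A(cd^2)=dA(c)$ forces $(1-d)A(c)=0$, hence $A\equiv 0$ in a domain, killing $A(c)B(c)=c$; and ``the largest power of the relevant element of $S$ dividing $c$'' is only well defined when $S$ is generated by a single non-unit $s$ with $\bigcap_n s^nR=0$, whereas coherence must hold for \emph{every} $d\in S$ simultaneously. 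The two sources of slack that might rescue this — the condition only needs to hold after deepening the index (i.e.\ $A(cd^2t)=dA(ct)$ for some $t\in S$ depending on $(s,d)$), and $A,B$ may depend on the index $s$ (an index-dependent sign twist handles $-1\in S$, for instance) — are precisely what a correct proof would have to exploit, and you exploit neither. So the proposal proves only the easy equivalence ``section exists iff equivariant splitting exists,'' not the lemma.

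The comparison with the paper is delicate, because the paper's proof is literally the ``obvious guess'' you reject: it takes $u^*(s)=s^2$, $u^{(s)}\colon c^{(s^2)}\mapsto(1^{(s)},c^{(s)})$, computes $m^{(s)}u^{(s)}(c^{(s^2)})=(sc)^{(s)}$ (the structure morphism, hence $mu=\mathrm{id}$), and asserts that $u$ is ``clearly'' a pre-morphism. Your coherence objection is accurate under the paper's own conventions: the structure morphism $R^{(sd)}\to R^{(s)}$ of \cref{ring-homotope} multiplies representatives by $d$, so along a morphism $d\colon s\to sd$ the two composites that the pre-morphism condition compares have first coordinates the constant maps $1^{(s)}$ and $d^{(s)}$, and no deepening of the index can reconcile constants unless $d=1$ in $R$. (The constant-$1$ coordinate \emph{would} be coherent for the division homomorphisms of \cref{rem:prorings-comment}, but with those as transition maps $mu$ is no longer the identity — the printed proof trades on this conflation.) So rather than taking a different route from the paper, you have flagged a genuine defect in its one-line argument; but that raises, not lowers, your burden: having discarded the paper's section, you must either produce a genuinely coherent one for arbitrary $(R,S)$ or repair the statement (the downstream uses of \cref{RingGeneration} invoke it, through products, to cancel $m$ on the right, so some epimorphism property must survive). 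The proposal does neither, and the lemma is left unproven on your route.
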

\begin{proof}
Consider the following pre-morphism of pro-sets:
\[u \colon R^{(\infty)} \to R^{(\infty)} \times R^{(\infty)}, \enskip u^*(s) = s^2, \enskip u^{(s)} \colon c^{(s^2)} \mapsto (1^{(s)}, c^{(s)}).\]
Clearly, this is indeed a pre-morphism and
\[m^{(s)}\bigl(u^{(s)}\bigl(c^{(s^2)}\bigr)\bigr) = 1^{(s)} c^{(s)} = (sc)^{(s)},\]
which shows that $mu = \mathrm{id}_{R^{(\infty)}}$.
\end{proof}

The majority of the calculations encountered in the present paper occur in the category of pro-sets or pro-groups.
Most often we need to prove certain equalities between composite pro-set or pro-group morphisms.
It turns out that the usual notation for categorical composition makes these calculations too lengthy and hardly readable. In order to remedy this and also make our computations look like the usual computations with root unipotents in Steinberg groups, we need to introduce a certain way to denote pro-set (pro-group) morphisms, specifically composite ones.

\begin{conv} \label{conv:notation}
First of all, notice that the present conventions apply only to algebraic expressions in which the symbol $(\infty)$ occurs in the upper index of all free variables (e.\,g. the identities of~\cref{RingPresentation} or~\cref{lem:elim-lhs}, but not the identities~\eqref{R1}--\eqref{R4}). We call such expressions {\it pro-expressions}.
 \begin{itemize}
  \item Any pro-expression encountered in the sequel is meant to denote a certain morphism of pro-sets. Apart from the variables marked with the index $(\infty)$, a pro-expression may also involve group or rng operations and other pro-set morphisms.
  \item Whenever a pro-expression does not involve other pro-set morphisms, it should be understood according to~\cref{df-pro-set-morphisms} (the domain and codomain will usually be clear from the context);
  \item To denote the composition of pro-set morphisms we use the syntax of substituted expressions. For example, if $f, g \colon X^{(\infty)} \to Y^{(\infty)}$ are morphisms of pro-groups and $[t_1^{(\infty)}, t_2^{(\infty)}]$ is the pro-group morphism from~\cref{example-commutator}, then $[f(a^{(\infty)}), g(b^{(\infty)})]$ denotes the composite morphism \[[t_1^{(\infty)}, t_2^{(\infty)}] \circ (f\times g) \colon X^{(\infty)} \times X^{(\infty)} \to Y^{(\infty)}.\]  
  \item Any equality of pro-expressions should be understood as the equality of pro-set morphisms defined by these expressions.
  \item The exact names of the variables occuring in a pro-expression are unimportant and are usually chosen arbitrarily. On the other hand, there is always some natural order on the variables that allows one to read the pro-expression unequivocally. In particular, $[t_1^{(\infty)}, t_2^{(\infty)}]$ and $[a^{(\infty)}, b^{(\infty)}]$ denote the same pro-set morphism $G^{(\infty)} \times G^{(\infty)} \to G^{(\infty)}$, while $[t_1^{(\infty)}, t_2^{(\infty)}]$ and $[t_2^{(\infty)}, t_1^{(\infty)}]$ are two different morphisms.
  \item The domain of the pro-set morphism defined by a pro-expression will usually be clear from the context. 
  Most often, it is a power of a single pro-object. In this case to determine the exponent one should count the number of different variables occuring in the expression.
  \item The notation for the multiplication operation is usually suppressed, i.\,e. we prefer the notation $a^{(\infty)} b^{(\infty)}$ to $m(a^{(\infty)}, b^{(\infty)})$ or $a^{(\infty)} \cdot b^{(\infty)}$.
  \item The syntax of tuples is used to denote the product of morphisms. For example, if $f,g \colon X^{(\infty)} \to Y^{(\infty)}$ are morphisms of pro-sets, then the notation $(f(x_1^{(\infty)}), g(x_2^{(\infty)}))$ means simply $f\times g$.
  \item If $g$ is a morphism of pro-sets with $X^{(\infty)} \times Y^{\infty}$ as its domain, then we write $g(a^{(\infty)}, b^{(\infty)})$ instead of $g((a^{(\infty)}, b^{(\infty)}))$.
  \item Notice that the trivial group $1$ is a zero object in the category $\Pro(\Group)$, therefore for any pro-groups $G^{(\infty)}, H^{(\infty)}$ there is a unique morphism $G^{(\infty)} \to H^{(\infty)}$ passing through $1$, this morphism will also be denoted by $1$.
 \end{itemize}
\end{conv} 
Now we are ready to formulate our next result.
\begin{lemma}\label{RingPresentation}
Let \(G^{(\infty)}\) be a pro-group, $R^{(\infty)}$ be the pro-rng defined above and \(g \colon R^{(\infty)} \times R^{(\infty)} \to G^{(\infty)}\) be a morphism of pro-sets. There is a morphism \(f \colon R^{(\infty)} \to G^{(\infty)}\) of pro-groups such that
\[g\bigl(a^{(\infty)} , b^{(\infty)}\bigr) = f\bigl(a^{(\infty)} b^{(\infty)}\bigr)\]
if and only if \(g\) satisfies the following identities in $\Pro(\Set)$:
\begin{itemize}
\item \(\bigl[g\bigl(a_1^{(\infty)}, b_1^{(\infty)}\bigr), g\bigl(a_2^{(\infty)}, b_2^{(\infty)}\bigr)\bigr] = 1\);
\item \(g\bigl(a_1^{(\infty)} + a_2^{(\infty)}, b^{(\infty)}\bigr) = g\bigl(a_1^{(\infty)}, b^{(\infty)}\bigr)\, g\bigl(a_2^{(\infty)}, b^{(\infty)}\bigr)\);
\item \(g\bigl(a^{(\infty)}, b_1^{(\infty)} + b_2^{(\infty)}\bigr) = g\bigl(a^{(\infty)}, b_1^{(\infty)}\bigr)\, g\bigl(a^{(\infty)}, b_2^{(\infty)}\bigr)\);
\item \(g\bigl(a^{(\infty)} b^{(\infty)}, c^{(\infty)}\bigr) = g\bigl(a^{(\infty)}, b^{(\infty)} c^{(\infty)}\bigr)\).
\end{itemize}
\end{lemma}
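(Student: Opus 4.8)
The plan is to prove the two implications separately: the forward direction by direct verification, and the converse by exploiting the split epimorphism produced in \cref{RingGeneration}.

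For the \emph{necessity} direction, I would assume that such an $f$ exists, so that $g = f \circ m$ with $m$ the rng multiplication of $R^{(\infty)}$, and read off the four identities. Since $f$ is a homomorphism out of the (additively) commutative pro-group $R^{(\infty)}$, any two of its values commute, $f(x^{(\infty)})\, f(y^{(\infty)}) = f(x^{(\infty)} + y^{(\infty)}) = f(y^{(\infty)})\, f(x^{(\infty)})$, and specializing $x^{(\infty)} = a_1^{(\infty)} b_1^{(\infty)}$, $y^{(\infty)} = a_2^{(\infty)} b_2^{(\infty)}$ yields the commutator identity. The two additivity identities then follow from left and right distributivity of the multiplication together with $f(p+q) = f(p)\, f(q)$, and the balance identity $g(ab,c)=g(a,bc)$ is associativity of the multiplication read through $f$. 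Each of these is an equality of pro-set morphisms obtained by substitution as in \cref{df-pro-set-morphisms}.

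For the \emph{sufficiency} direction I would start from the section $u$ of $m$ built in \cref{RingGeneration}, which satisfies $m \circ u = \mathrm{id}_{R^{(\infty)}}$. If a factorization $g = f \circ m$ exists, precomposing with $u$ forces $f = g \circ u$; conversely this choice gives $f \circ m = g \circ u \circ m$, so the whole problem reduces to the single equality $g \circ u \circ m = g$ in $\Pro(\Set)$, plus the check that $f = g \circ u$ is a morphism of pro-groups. The latter check is quick: using the description of $\Pro(\Group)$ among the group objects of $\Pro(\Set)$, it suffices to verify that $f$ respects the group multiplication (compatibility with unit and inverse being then automatic), and since $u$ inserts the level-wise unit into the first argument, the required identity $f(a^{(\infty)} + b^{(\infty)}) = f(a^{(\infty)})\, f(b^{(\infty)})$ is exactly the third hypothesis specialized with the unit in the first slot. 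To obtain the factorization equality $g \circ u \circ m = g$ I would use only the balance identity: composing $g$ with $u \circ m$ writes the left-hand side as $g$ evaluated on the level-wise unit in the first slot and a product in the second, and $g(a, bc) = g(ab, c)$ then moves one factor into the first argument, turning the expression into $g(a^{(\infty)}, b^{(\infty)})$ up to replacing $1 \cdot a^{(\infty)}$ by $a^{(\infty)}$.

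The hard part will be exactly this last replacement. The homotope $R^{(\infty)}$ has no global unit, and multiplication by the level-wise unit $1^{(s)}$ differs from the identity by one structure morphism of the pro-rng (multiplication by $s$), while the section $u$ shifts the index $s \mapsto s^2$; consequently $g \circ u \circ m$ and $g$ agree not as pre-morphisms but only after passing to a common refining index, i.e.\ under the equivalence relation defining $\Pro(\Set)$-morphisms. I expect the main effort to be the bookkeeping of index functions showing that, once the balance identity has transferred the factor, the leftover multiples of $s$ are absorbed by the structure and division morphisms of $R^{(\infty)}$. This is precisely the step that makes the pro-group formalism (rather than a naive computation inside a single $R^{(s)}$) indispensable.
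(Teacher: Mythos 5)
Your proposal is correct and is essentially the paper's own proof: the paper's construction $f'\bigl(c^{(s^2)}\bigr) = g'\bigl(1^{(s)}, c^{(s)}\bigr)$ is precisely your $f = g \circ u$ with the section $u$ from \cref{RingGeneration}, the homomorphism property comes from the third identity evaluated at the level-wise unit, uniqueness comes from $m$ being (split) epi, and the factorization is exactly the balance-identity computation $g'\bigl(1^{(s)}, (s^2 ab)^{(s)}\bigr) = g'\bigl((sa)^{(s)}, (sb)^{(s)}\bigr)$ with the leftover factors of $s$ absorbed by the structure morphisms $R^{(s^2)} \to R^{(s)}$. The only presentational difference is that the paper dispatches the index bookkeeping you flag as the hard part by first invoking \cref{lem:proobj-is-a-limit} to reduce to a constant target $G$, so that $g$ is represented by a single homomorphism $g'$ at one level $s$ and the whole verification happens on the nose rather than up to refinement.
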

\begin{proof}
The necessity of the identities is clear.
By~\cref{RingGeneration} the morphism \(f\) is unique, so it suffices to show that it exists.
By~\cref{lem:proobj-is-a-limit} it suffices to consider the case where \(G\) is a group. 
Let \(g\) be a morphism satisfying the above identities.
By definition, there exists $s\in S$ such that $g$ is given by a homomorphism \(g' \colon R^{(s)} \times R^{(s)} \to G\) satisfying the first three identities and the identity 
\[g'\bigl((asb)^{(s)}, c^{(s)}\bigr) = g'\bigl(a^{(s)}, (bsc)^{(s)}\bigr).\]
Consider the map \(f' \colon R^{(s^2)} \to G\) given by
\[f'\bigl(c^{(s^2)}\bigr) = g'\bigl(1^{(s)}, c^{(s)}\bigr),\]
it is a homomorphism by the first and the third identities.
From the last two identities we conclude that for all \(a, b \in R\) one has
\begin{align*}
f'\bigl(a^{(s^2)} b^{(s^2)}\bigr)
&= g' \bigl( 1^{(s)}, (s^2 ab)^{(s)} \bigr)\\
&= g' \bigl( (sa)^{(s)}, (sb)^{(s)} \bigr)\\
&= g' \bigl(a^{(s^2)}, b^{(s^2)}\bigr).
\end{align*}
It is clear that \(f'\) defines the required morphism \(f\) of pro-groups.
\end{proof}

\subsection{Steinberg groups and Steinberg pro-groups}
Let $\Phi$ be an irreducible root system of rank $\geq 3$.
We assume that the root system $\Phi$ is contained in a Euclidean space $V = \mathbb{R}^\ell$ whose inner product we denote by $(-, -)$.
For a pair of roots $\alpha, \beta \in \Phi$ we denote by $\langle \alpha, \beta \rangle$ the integer $\tfrac{2(\alpha, \beta)}{|\beta|^2}$.

Recall that a root subset $\Sigma \subseteq \Phi$ is called {\it closed} if $\alpha, \beta \in \Sigma$, $\alpha+\beta\in\Phi$ imply $\alpha+\beta\in \Sigma$. A closed root subset $\Sigma$ is called {\it symmetric} (resp. {\it special}) if $\Psi = -\Psi$ (resp. $\Psi \cap -\Psi = \emptyset$). By definition, a {\it root subsystem} $\Psi \subseteq \Phi$ is a symmetric and closed root subset.

We start by recalling the definition of the Steinberg group.
\begin{df} \label{def:Steinberg}
Let $R$ be a ring. The {\it Steinberg group $\St(\Phi, R)$} is given by generators $x_\alpha(a)$, where $\alpha \in \Phi$ and $a \in R$ and the following list of defining relations (cf.~\cite{Re75}):
\begin{align}
 x_\alpha(a) \cdot x_\alpha(b)    &= x_\alpha(a+b); \tag{R1} \label{R1} \\
 [x_\alpha(a),\ x_\beta(b)] &= 1, \tag{R2} \label{R2} \\ 
 \multispan2{\hfil if $\alpha + \beta \not\in\Phi \cup \{0\};$} \nonumber \\
 [x_\alpha(a),\ x_\beta(b)] &= x_{\alpha + \beta}(N_{\alpha,\beta} \cdot ab), \tag{R3} \label{R3} \\
 \multispan2{\hfill if $\alpha+\beta\in\Phi$ but $\alpha+2\beta,\ 2\alpha+\beta\not\in\Phi;$} \nonumber \\
 [x_\alpha(a),\ x_\beta(b)] &= x_{\alpha + \beta}(N_{\alpha,\beta} \cdot ab) \cdot x_{2\alpha+\beta}(N_{\alpha,\beta}^{2,1} \cdot a^2b), \tag{R4} \label{R4} \\ \multispan2{ \hfill if $\alpha+\beta,2\alpha+\beta\in\Phi$.} \nonumber  \end{align}
\end{df}
The constants $N_{\alpha,\beta}$ and $N_{\alpha,\beta}^{2,1}$ appearing in the above relations are called the {\it structure constants} of the Chevalley group of type $\Phi$. Let us take a closer look at them.

First of all, notice that we excluded the case $\Phi=\mathsf{G}_2$ so the only possibilities for $N_{\alpha, \beta}$ appearing in the above relations are $\pm 1$ or $\pm 2$.
Notice that $|N_{\alpha,\beta}| = 2$ only when $\alpha$ and $\beta$ are short but $\alpha+\beta$ is long, in which case we set $\widehat{N}_{\alpha, \beta} = \frac{1}{2} N_{\alpha, \beta}$.
In the other cases $|N_{\alpha, \beta}| = 1$.
Now, by definition, 
\begin{align}
\label{eq:N21def}
N_{\alpha,\beta}^{2,1} = N_{\alpha,\beta} \cdot \widehat{N}_{\alpha, \alpha+\beta}.
\end{align}
It is clear that $|N_{\alpha,\beta}^{2,1}|=1$.

Many different methods of the choice of signs of the structure constants have been proposed in the literature, see e.\,g.~\cite{VP}. 
Regardless of their concrete choice, however, the structure constants always must satisfy certain relations. First of all, recall from~\cite[\S~14]{VP} that
\begin{equation} \label{eq:sc-ids-sl} N_{\alpha, \beta} = -N_{\beta,\alpha} = - N_{-\alpha, -\beta} = \tfrac{|\alpha+\beta|^2}{|\alpha|^2} N_{\beta, -\alpha-\beta} = \tfrac{|\alpha+\beta|^2}{|\beta|^2} N_{-\alpha-\beta, \alpha}. \end{equation}
These identities will be used in the sequel without explicit reference. 

We also will need another identity for structure constants.
To formulate it succinctly, we extend the domain of the structure constant function $N_{-,-}$ by setting $N_{\alpha, \beta} = 0$ whenever $\alpha+\beta\not\in\Phi\setminus\{0\}$.
Now if $\alpha, \beta, \gamma$ is a triple of pairwise linearly independent roots such that $\alpha+\beta+\gamma\neq 0$, then one has 
\begin{equation}\label{eq:cocycle2} N_{\alpha,\beta+\gamma} N_{\beta,\gamma} + N_{\beta,\gamma+\alpha} N_{\gamma,\alpha} + N_{\gamma,\alpha+\beta}N_{\alpha,\beta} = 0. \end{equation}
This identity is an equivalent form of (N9) in~\cite[\S~14]{VP} (cf. also (H4) in~\cite{Re75}).

Sometimes it is convenient to write down the relations~\eqref{R2}--\eqref{R4} as a single relation
\begin{equation}\label{eq:R234} [x_\alpha(a),\ x_\beta(b)] = \prod\limits_{\substack{i\alpha + j\beta\in\Phi \\ i,j >0}} x_{i\alpha + j\beta}(N_{\alpha,\beta}^{i,j} \cdot a^i b^j), \end{equation}
in which $N^{1,2}_{\alpha,\beta} := -N^{2,1}_{\beta, \alpha}$ and $N^{1,1}_{\alpha,\beta}:=N_{\alpha,\beta}$.

Notice that the multiplicative identity of $R$ is actually never used in the definition of the Steinberg group, 
 which allows one to use it in the situation when $R$ is an rng.
\begin{df}\label{def:Steinberg-homotope}
 Let 
 $\Phi$ be a root system of rank $\geq 3$. Applying the Steinberg group functor $\St(\Phi, -)$ to the projective system of rngs $R^{(s)}$ (see~\cref{ring-homotope}) we obtain a projective system of groups, whose formal projective limit will be denoted by $\St^{(\infty)}(\Phi, R)$ and will be called the {\it Steinberg pro-group}. 
\end{df}

\begin{rem} \label{rem:pro-Steinberg-comment}
There is a group homomorphism $\St(\Phi, R^{(s)}) \to \St(\Phi, sR)$ given by $x_\alpha^{(s)}(a)\mapsto x_\alpha(sa)$. By~\cref{rem:prorings-comment} this homomorphism is an isomorphism in the case where $R$ is an integral domain. Thus, similarly to pro-rngs, Steinberg pro-groups often do not have global elements.
\end{rem}
 
For every root $\alpha \in \Phi$ there is a ``root subgroup'' morphism in $\Pro(\Group)$
\[x_{\alpha} \colon R^{(\infty)} \to \St^{(\infty)}(\Phi, R).\] 
defined by the pre-morphism $x_\alpha^* = \mathrm{id}_S$, $x_\alpha^{(s)}(a^{(s)}) = x_\alpha(a^{(s)})$.

We denote by $\GG_\mathrm{sc}(\Phi,-)$ the simply-connected Chevalley--Demazure group scheme corresponding to a root system $\Phi$.
We also fix a pinning for $\GG_\mathrm{sc}(\Phi,-)$. By this we mean that a split maximal torus $T$ of $\GG_\mathrm{sc}(\Phi,-)$ and a Borel subgroup $B$ containing $T$ are chosen, and for every $\alpha\in\Phi$ there is an embedding $t_\alpha\colon \mathbb{G}_a\to\GG_\mathrm{sc}(\Phi, -)$.
For $a \in R$ we call the element $t_\alpha(a)$ of $\GG_\mathrm{sc}(\Phi, R)$ an {\it elementary root unipotent}, cf.~\cite{VP}. Notice that sometimes different notation is used for these elements (e.\,g. $x_\alpha(a)$ or $e_\alpha(a)$, cf.~\cite{Ma69,St71,VP}).

Recall that for an ideal $I \trianglelefteq R$ the {\it congruence subgroup} $\GG_\mathrm{sc}(\Phi, R, I)$ is defined as 
 the kernel of the homomorphism $\GG_\mathrm{sc}(\Phi, R) \to \GG_\mathrm{sc}(\Phi, R/I)$.

Recall also that one can define the unitalization of an rng $R$ as the semidirect product $R \rtimes \ZZ$ (cf. e.\,g.~\cite[Definition~3.2]{Sin}).
 \begin{df}
Consider the projective system of congruence subgroups
 \[\GG(\Phi, R^{(s)}) := \GG_{\mathrm{sc}}(\Phi, R^{(s)} \rtimes \ZZ, R^{(s)}) = \Ker\left(\GG_{\mathrm{sc}}(\Phi, R^{(s)} \rtimes \ZZ) \to \GG_{\mathrm{sc}}(\Phi, \ZZ)\right)\] with the structure morphisms induced by the structure morphisms of \(R^{(\infty)}\). Define the {\it simply-connected Chevalley pro-group} $\GG^{(\infty)}(\Phi, R)$ as the formal projective limit of this system.
\end{df}

Recall that there is well-defined homomorphism $\mathrm{st}\colon \St(\Phi, R) \to \GG_{\mathrm{sc}}(\Phi, R)$ sending each generator $x_\alpha(a)$ to the root unipotent $t_\alpha(a)$. The pro-group analogue \(\mathrm{st} \colon \St^{(\infty)}(\Phi, R) \to \GG^{(\infty)}(\Phi, R)\) of this homomorphism is given by the pre-morphism \(\mathrm{st}^* = \mathrm{id}_S\), \(\mathrm{st}^{(s)}\bigl(x_{\alpha}(a^{(s)})\bigr) = t_\alpha(a^{(s)})\). 

\begin{rem}\label{rem:uni-rad}
Let $\Sigma \subseteq \Phi$ be a special root subset and $R$ be an rng. It is well-known that the restriction of the canonical map $\St(\Phi, R)\to\GG_{\mathrm{sc}}(\Phi, R)$ to the subgroup \[\mathrm{U}(\Sigma, R) = \langle x_{\alpha}(a) \mid a\in R,\ \alpha\in \Sigma \rangle \leq \St(\Phi, R)\] is injective. Morever, for any chosen linear order on \(\Sigma\) the map $R^{\Sigma} \to \mathrm{U}(\Sigma, R)$ given by $(r_\alpha)_\alpha \mapsto \prod_\alpha x_\alpha(r_\alpha)$ is a bijection. As a consequence, the subgroup $\mathrm{U}(\Sigma, R)$ admits presentation by means of generators $x_\alpha(a)$ for $\alpha \in \Sigma$ and the relations~\eqref{R1}--\eqref{R4} in which $\alpha, \beta \in \Sigma$.
\end{rem}

\begin{rem}
 The definition of the group $\GG(\Phi, R^{(s)})$ can be reformulated in terms of Hopf algebras. Denote by \(H_\Phi\) the Hopf \(\ZZ\)-algebra corresponding to the group scheme \(\GG_{\mathrm{sc}}(\Phi, -)\). Thus, $\GG_\mathrm{sc}(\Phi, R) = \mathbf{Ring}(H_\Phi, R)$.
 
 For a pair $A, B$ of abelian groups we define the product $A \otimeshat B$ as follows: \[A \otimeshat B := (A \otimes_{\ZZ} B) \oplus A \oplus B.\]  
 Recall that if $R_1$ and $R_2$ are rngs then $R_1 \otimeshat R_2$ realizes their coproduct in the category $\Rng$.
 The product structure is induced from that of $(R_1\rtimes\mathbb Z)\otimes_{\mathbb Z}(R_2\rtimes\mathbb Z)$. The canonical morphisms $R_i \to R_1 \otimeshat R_2$, $i=1,2$ are the obvious inclusions $r_1 \mapsto (0, r_1, 0)$, $r_2 \mapsto (0, 0, r_2)$.
 $\Rng$ has the structure of a symmetric monoidal category with zero rng as the unit object and $\otimeshat$ as the tensor product. For an rng $R$ we denote by $m$ the natural homomorphism $R^{\otimeshat 2} \to R$ given by $(s \otimes t, u, v) \mapsto st + u + v$.
 
 Notice also that that the usual $\ZZ$-module tensor product \(R_1 \otimes_{\mathbb Z} R_2\) has the natural structure of an rng and that $R_1 \otimes R_2 \subseteq R_1 \otimeshat R_2$. A priori there are no homomorphisms $R_i \to R_1 \otimes R_2$. However, if $R_1$ happens to be a ring, there is a canonical homomorphism $R_2 \to R_1 \otimes R_2$.

 Recall that a {\it Hopf monoid} $H$ in a symmetric monoidal category, by definition, is a bimonoid $(H, m, u, \Delta, \varepsilon)$ such that there is a morphism $\varsigma\colon H\to H$, called the {\it antipode}, satisfying $u \circ \varepsilon = m \circ (1 \otimes \varsigma) \circ \Delta$.
 We claim that the augmentation ideal \(\Ker(\varepsilon)\) of $H_\Phi$ has the structure of a Hopf monoid in \(\Rng\). The coproduct $\Delta$ and the antipode $\varsigma$ are induced from the corresponding morphisms defining the structure of a Hopf algebra on $H_\Phi$. The counit $\varepsilon$ is the unique homomorphism $\Ker(\varepsilon) \to 0$.
 
 Now if $R$ is an arbitary ring and $s \in R$ then there is a canonical isomorphism \(\Rng(\Ker(\varepsilon), R^{(s)}) \cong \GG(\Phi, R^{(s)})\) induced by the mapping 
 \[(h \colon \Ker(\varepsilon) \to R^{(s)}) \mapsto (\widetilde{h} \colon H_\Phi \to R^{(s)}\rtimes\ZZ), \]
 where $\widetilde{h}(a) = \left(h(a - \varepsilon(a)),\, \varepsilon(a)\right)$.
 The structure of a group on $\GG(\Phi, R^{(s)})$ is then induced from the Hopf monoid structure on $\Ker(\varepsilon)$ by duality.
 For instance, the neutral element in this group is the zero map \(0 \colon \Ker(\varepsilon) \to R^{(s)}\). The product of \(g, h \in \Rng(\Ker(\varepsilon), R^{(s)})\) is the composite map
 \[ \xymatrix{ \Ker(\varepsilon) \ar[r]^(0.45){\Delta} & \Ker(\varepsilon) ^{\otimeshat 2} \ar[r]^(0.47){g \otimeshat h} & R^{(s)} \otimeshat R^{(s)} \ar[r]^(0.6){m} & R^{(s).}} \]
\end{rem}

\begin{rem}
 To formulate the next lemma, we need to introduce a way of reinterpreting the usual Steinberg relations~\eqref{R2}--\eqref{R4} as relations between the morphisms $x_\alpha$ in the category $\Pro(\Group)$. We call these relations the {\it pro-analogues} of~\eqref{R2}--\eqref{R4}. To obtain these relations one has to add the upper index $(\infty)$ to the free variables $a$, $b$ occuring in~\eqref{R2}--\eqref{R4} and then read the resulting expressions according to~\cref{conv:notation}. The fact that these relations hold is an easy consequence of our definitions.
 
 Notice that the pro-analogue of~\eqref{R1} is the identity $x_\alpha(a^{(\infty)}) x_\alpha(b^{(\infty)}) = x_\alpha(a^{(\infty)} + b^{(\infty)})$, or in the usual notation $m \circ (x_\alpha \times x_\alpha) = x_\alpha \circ +_{R^{(\infty)}}$. It is clear that this relation simply means that $x_\alpha$ is a morphism of group objects in $\Pro(\Set)$, which is a consequence of the condition that $x_\alpha$ is a pro-group morphism.
\end{rem}

Let $\mathcal{X} = \{ x_i \}_{i \in I}$ be a collection of morphisms in $\Pro(\Group)$ with a common codomain, which we denote by $S^{(\infty)}$. We say that $\mathcal{X}$ {\it generates} $S^{(\infty)}$ if for every pair of pro-group morphisms $f_1, f_2 \colon S^{(\infty)} \to G^{(\infty)}$ in order to verify the equality $f_1 = f_2$ it is enough to check that $f_1 x_i = f_2 x_i$ for $x_i \in \mathcal{X}$. 

\begin{lemma}\label{SteinbergPresentation}
The morphisms $x_{\alpha} \colon R^{(\infty)} \to \St^{(\infty)}(\Phi, R), \ \alpha\in \Phi$ generate $\St^{(\infty)}(\Phi, R)$. Moreover, for every pro-group $G^{(\infty)}$ to obtain a morphism $f \colon \St^{(\infty)}(\Phi, R) \to G$ it suffices to construct a collection of pro-group morphisms \(f_{\alpha} \colon R^{(\infty)} \to G\), $\alpha\in\Phi$, satisfying the pro-analogues of~\eqref{R2}--\eqref{R4}, in which $x_\alpha$'s are replaced with $f_{\alpha}$'s.
\end{lemma}
\begin{proof}
Notice that by~\cref{lem:proobj-is-a-limit} both of the assertions can be verified in the special case where $G^{(\infty)} = G$ is a group.

Let us verify the first assertion. Since there is only a finite number of roots in $\Phi$ one can find a sufficiently large index $s \in S$ such that the homomorphisms $f_1^{(s)}, f_2^{(s)}\colon \St(\Phi, R^{(s)}) \to G$ become equal after precomposition with $x_\alpha$ for all $\alpha \in \Phi$.
Since the root elements $x_\alpha(a^{(s)})$ generate $\St(\Phi, R^{(s)})$ we conclude that the morphisms $f_1$ and $f_2$ are equal.

Let us verify the second assertion. By definition, a morphism $f_\alpha \colon R^{(\infty)} \to G$ corresponds to a single group homomorphism 
 $f'_\alpha \colon R^{(s_\alpha)} \to G$ for some $s_\alpha \in S$. Precomposing each $f'_\alpha$ with the structure morhism $R^{(s)} \to R^{(s_\alpha)}$ for sufficiently large $s \in S$, we obtain a collection of homomorphisms $\widetilde{f}_\alpha \colon R^{(s)} \to G$.
 
Notice that~\eqref{R2}--\eqref{R4} specify only a {\it finite} collection of identities in $\Pro(\Set)$ to which $f_\alpha$'s must satisfy.
Unwinding the definitions, we find a sufficiently large index $s \in S$ such that $\widetilde{f}_\alpha$ satisfy the same identities (with $f_\alpha$'s replaced by $\widetilde{f}_\alpha$'s).
Thus, we obtain a group homomorphism $\St(\Phi, R^{(s)}) \to G$, which, in turn, determines a morphism $\St^{(\infty)}(\Phi, R) \to G$.
\end{proof}

\section{Elimination of roots}
Throughout this section we always assume that \(R\) is an arbitrary commutative ring,
 \(S \subseteq R\) is a multiplicative subset of $R$ and \(\Phi\) is a root system of rank \(\geq 3\) different from \(\rB_\ell\) and \(\rC_\ell\). 

\begin{lemma}\label{ThreeRoots}
Let \(\alpha, \beta, \gamma \in \Phi\) be a triple of roots having the same length such that $\alpha = \beta + \gamma$.  
Then there exist roots \(\beta_1, \gamma_1 \in \Phi \setminus (\ZZ\beta + \ZZ\gamma)\) having the same length as $\beta$ and, moreover, such that \(\beta_1 + \gamma_1 = \beta\). The roots $\beta, \gamma, \beta_1, \gamma_1$ are contained in a root subsystem $\Phi_0 \subseteq \Phi$, whose type is either \(\rA_3\) or \(\rC_3\). \end{lemma}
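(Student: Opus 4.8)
The plan is to reduce the statement to a case analysis over the possible irreducible root subsystems of rank $3$ that can contain a fixed decomposition $\alpha = \beta + \gamma$ into equal-length roots. Since $\Phi$ is irreducible of rank $\geq 3$ and, by the standing assumption of this section, is not of type $\rB_\ell$ or $\rC_\ell$, every root of $\Phi$ has the same length as $\beta$ (in the simply-laced case) or can be grouped with roots of its own length. The key observation is that $\beta$, $\gamma$, and $\alpha = \beta+\gamma$ all have the same length and span a $2$-dimensional subspace $U = \mathbb{R}\beta + \mathbb{R}\gamma$. First I would determine the rank-$2$ subsystem $\Phi \cap U$: because $\beta, \gamma, \beta+\gamma$ are roots of equal length with $\beta+\gamma$ a root, the angle between $\beta$ and $\gamma$ is $120^\circ$, so $\Phi \cap U$ is a subsystem of type $\rA_2$ sitting inside $\Phi$.

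The heart of the argument is then to enlarge this $\rA_2$ to a rank-$3$ subsystem. I would look for a root $\delta \in \Phi$ outside $U$ that interacts with the $\rA_2$-configuration so as to produce a new decomposition $\beta = \beta_1 + \gamma_1$ with $\beta_1, \gamma_1 \notin \mathbb{Z}\beta + \mathbb{Z}\gamma$. Concretely, the roots $\beta_1, \gamma_1$ should be obtained by ``rotating'' the $\rA_2$-triangle out of the plane $U$: one sets, say, $\gamma_1 = \beta - \beta_1$ where $\beta_1$ is an equal-length root forming a $120^\circ$ angle with $\gamma_1$ and neither lies in $U$. The existence of such a $\delta$ is guaranteed by irreducibility together with $\operatorname{rank}\Phi \geq 3$: the $\rA_2$ cannot be a connected component, so some root must attach to it, and the simply-laced (or equal-length) hypothesis forces the attachment to extend the $\rA_2$ either to an $\rA_3$ (when the extra node attaches with a single bond) or, in the non-simply-laced equal-length situation, to the long subsystem of a $\rC_3$. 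These are exactly the two types $\rA_3$ and $\rC_3$ claimed, and in both of them the desired second decomposition $\beta = \beta_1 + \gamma_1$ into equal-length roots lying off $U$ is visible directly from the standard root diagrams.

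The cleanest way to organize this is to exhibit, for each admissible ambient type of $\Phi$ (namely $\rA_{\geq 3}$, $\rD_{\geq 4}$, $\rE_{6,7,8}$, and $\rF_4$), an explicit rank-$3$ subsystem $\Phi_0 \subseteq \Phi$ of type $\rA_3$ or $\rC_3$ containing the prescribed roots, and then to read off $\beta_1, \gamma_1$ from a fixed model of $\Phi_0$. For instance, realizing $\rA_3$ with simple roots $e_1 - e_2$, $e_2 - e_3$, $e_3 - e_4$ and writing $\beta = e_1 - e_3$, $\gamma = e_3 - e_4$, $\alpha = e_1 - e_4$, one takes $\beta_1 = e_1 - e_2$ and $\gamma_1 = e_2 - e_3$; these lie in $\rA_3 \setminus (\mathbb{Z}\beta + \mathbb{Z}\gamma)$ and satisfy $\beta_1 + \gamma_1 = \beta$. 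The $\rC_3$ case supplies the configuration needed when $\Phi = \rF_4$ and $\beta, \gamma$ are short. I expect the main obstacle to be a bookkeeping one rather than a conceptual one: verifying uniformly, across all the listed ambient types and both choices of root length, that a suitable $\Phi_0$ actually embeds containing the given $\alpha, \beta, \gamma$, and that the produced $\beta_1, \gamma_1$ genuinely fall outside the lattice $\mathbb{Z}\beta + \mathbb{Z}\gamma$. I would handle this by invoking the standard fact that any two roots of equal length spanning an $\rA_2$ lie in a common $\rA_3$ or $\rC_3$ subsystem whenever there is a third direction available, reducing the whole lemma to inspection of the rank-$3$ diagrams.
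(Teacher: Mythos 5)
Your proposal is workable and lands on the same conclusion as the paper, but it is organized differently and contains one genuine inaccuracy. The paper's proof avoids your case analysis over the ambient types $\rA_{\geq 3}$, $\rD_{\geq 4}$, $\rE_{6,7,8}$, $\rF_4$ entirely: it considers the subset $\Phi'$ of roots of the same length as $\beta$, observes that $\Phi'$ is either all of $\Phi$ (simply-laced case) or a copy of $\rD_4$ (when $\Phi = \rF_4$), and since in either case this is a simply-laced configuration on at least three nodes, the existence of a decomposition $\beta = \beta_1 + \gamma_1$ off the plane $\ZZ\beta + \ZZ\gamma$ follows at once. The type of $\Phi_0$ is then identified abstractly: the smallest root subsystem containing $\beta, \gamma, \beta_1, \gamma_1$ is irreducible of rank $3$, hence of type $\rA_3$ or $\rC_3$, with $\rC_3$ occurring exactly when $\Phi = \rF_4$ and $\beta$ is short. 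No Weyl-group transitivity on $\rA_2$-pairs and no explicit embeddings per ambient type are needed. Your route buys concreteness (explicit coordinates, which the paper does use in the \emph{later} lemmas), at the cost of bookkeeping that the uniform $\Phi'$ argument makes unnecessary.

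Two corrections are needed. First, ``the long subsystem of a $\rC_3$'' is wrong: the long roots of $\rC_3$ are $\pm 2e_i$ and form $\rA_1 \times \rA_1 \times \rA_1$, which contains no $\rA_2$ whatsoever. In the relevant case the four roots $\beta, \gamma, \beta_1, \gamma_1$ are \emph{short} roots of the $\rC_3$; the short roots $\pm e_i \pm e_j$ do form a $\rD_3 \cong \rA_3$ pattern, but that set is not closed in $\rC_3$ (e.g.\ $(e_1 - e_2) + (e_1 + e_2) = 2e_1$ is long), which is precisely why the generated subsystem is $\rC_3$ rather than $\rA_3$ here --- the paper flags the same subtlety when it notes that $\Phi'$ is closed only if $\beta$ is long. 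Second, the ``standard fact'' you invoke at the end --- that any equal-length $\rA_2$ lies in a common $\rA_3$ or $\rC_3$ whenever a third direction is available --- is essentially a restatement of the lemma being proved, so as written this step begs the question; it must be substantiated by the inspection you describe (or, more efficiently, by the $\Phi'$ argument), not merely cited. With those repairs your sketch goes through, since your explicit $\rA_3$ computation is correct and the analogous $\rC_3$ verification is routine.
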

\begin{proof}
Denote by $\Phi'$ the subset of $\Phi$ consisting of roots having the same length as $\beta$.
It follows from our assumptions that either $\Phi$ is simply-laced and $\Phi' = \Phi$ or $\Phi = \rF_4$, in which case $\Phi'$ is a root subset isomorphic to $\rD_4$ (notice that $\Phi'$ is closed only if $\beta$ is long). In either case the Dynkin diagram of $\Phi'$ is simply laced, connected and has at least $3$ vertices. Since all root subsystems of $\Phi'$ of type $\rA_2$ lie in the same orbit under the action of the Weyl group of $\Phi'$, we may assume $\beta$ and $\gamma$ to be basic roots corresponding to adjacent nodes on the Dynkin diagram of $\Phi'$, in which case the first assertion is clear.

For any choice of $\beta_1$ and $\gamma_1$ notice that the smallest root subsystem $\Phi_0$ containing $\beta, \gamma, \beta_1, \gamma_1$ is irreducible and has rank $3$, which implies that it is either of type $\rC_3$ (if $\Phi = \rF_4$ and $\beta$ is short) or $\rA_3$ (otherwise). \end{proof}

As an immediate application of the above lemma we prove the following generation result, which is a stronger version of the first claim of~\cref{SteinbergPresentation}.
 \begin{lemma}\label{DoubleRootElimination}
  Let \(\Phi_0 \subseteq \Phi\) be a rank \(2\) subsystem.
  The collection of root subgroup morphisms $x_\gamma \colon R^{(\infty)} \to \St^{(\infty)}(\Phi, R)$ for $\gamma \in \Phi\setminus \Phi_0$ generates $\St^{(\infty)}(\Phi, R)$.
 \end{lemma}
 \begin{proof}
  Thanks to~\cref{SteinbergPresentation} it suffices to show that $f_1 x_\delta = f_2 x_\delta$ for all $\delta \in \Phi_0$.
  Let $\delta$ be a root lying in $\Phi_0$. We claim that $\delta$ can be decomposed as a sum $\gamma_1 + \gamma_2$ for some $\gamma_1,\gamma_2 \in \Phi\setminus\Phi_0$ having the same length as $\delta$.
  We need to consider several cases. 
  
  In the case \(\Phi_0 \cong \rA_2\) the claim follows from~\cref{ThreeRoots}.
  Consider the case $\Phi_0 \cong \rB_2$, which is only possible for $\Phi=\rF_4$.
  Since all root subsystems of type $\rB_2$ lie in the same orbit under the action of $W(\rF_4)$ (cf. e.\,g.~\cite[Table~8]{Ca72}), we may assume that $\Phi_0$ corresponds to the central edge of the Dynkin diagram of $\rF_4$ and $\delta$ is one of the nodes incident to it, in which case the claim is obvious. Finally, consider the case $\Phi_0 \cong \rA_1+\rA_1$. If either $\Phi$ is simply-laced or $\Phi_0$ has roots of different length, acting by a suitable element of $W(\Phi)$ we may achieve that the basis vectors of $\Phi_0$ correspond to a pair of unjoined nodes of the Dynkin diagram of $\Phi$, in which case the claim is also obvious. On the other hand, if $\Phi=\rF_4$ and the roots of $\Phi_0$ have the same length, the claim can be proved by passing to the root subsystem $\Phi'\cong \rD_4$ consisting of roots of $\Phi$ having the same length as the roots of $\Phi_0$.    
  
  As an immediate consequence of the claim we conclude that
  \begin{multline*} f_1(x_\delta(a^{(\infty)}b^{(\infty)})) = [f_1(x_{\gamma_1}(N_{\gamma_1, \gamma_2}a^{(\infty)})),\ f_1(x_{\gamma_2}(b^{(\infty)}))] = \\
  = [f_2(x_{\gamma_1}(N_{\gamma_1, \gamma_2}a^{(\infty)})),\ f_2(x_{\gamma_2}(b^{(\infty)}))] = f_2(x_\delta(a^{(\infty)}b^{(\infty)})), \end{multline*}
  which together with~\cref{RingGeneration} implies the required equality $f_1 x_\delta = f_2 x_\delta$.
 \end{proof}

\begin{df}
We denote by $\St(\Phi\setminus\{\alpha\}, R)$ the group given by generators $x_\beta(a)$, $\beta \neq \alpha$, $a\in R$ and the subset of the set of relations \eqref{R1}--\eqref{R4} consisting of those relations, expressions for which do not contain $x_{\alpha}(a)$. 

Similarly to~\cref{def:Steinberg-homotope} one can define the pro-group $\St^{(\infty)}(\Phi \setminus\{ \alpha\}, R)$ as
the formal projective limit of the projective system of groups $\St(\Phi\setminus\{ \alpha\}, R^{(s)})$ where $s\in S$.
\end{df}

For $\beta \neq \alpha$ we use the same notation $x_\beta$ for the ``root subgroup'' morphisms 
$R^{(\infty)}\to \St^{(\infty)}(\Phi \setminus\{\alpha\}, R).$
Obviously, these morphisms satisfy the pro-analogues of~\eqref{R2}--\eqref{R4} not containing $\alpha$ in their notations.

We can formulate the analogue of~\cref{SteinbergPresentation} for $\St^{(\infty)}(\Phi \setminus \{ \alpha \}, R)$.
\begin{lemma}\label{rem:xpma-presentation}
The morphisms $x_\beta$ for $\beta\neq \alpha$ generate $\St^{(\infty)}(\Phi\setminus\{\alpha\}, R)$. To construct a morphism $f \colon \St^{(\infty)}(\Phi\setminus\{\alpha\}, R) \to G^{(\infty)}$ it suffices to construct a collection of pro-group morphisms $f_\beta \colon R^{(\infty)} \to G^{(\infty)}$ satisfying the pro-analogues of those relations \eqref{R2}--\eqref{R4} in which $x_\alpha$ does not appear (with $x_\beta$'s replaced with $f_\beta$'s). Moreover, among these pro-analogues we can omit all relations of the form $[x_\beta(b^{(\infty)}),\ x_\gamma(c^{(\infty)})] = 1$ for all pairs of long roots \(\beta, \gamma\) such that \(\beta + \gamma = 2\alpha\).
\end{lemma}
\begin{proof}
The proof of the first two assertions is analogous to~\cref{SteinbergPresentation}. Let us verify the last assertion. Notice that \(\Phi\) must be of type $\rF_4$, otherwise there are no such relations. Acting by a suitable element of $W(\rF_4)$ we may assume, without loss of generality, that \(\alpha = \frac{1}{2}(\mathrm e_1 + \mathrm e_2 + \mathrm e_3 + \mathrm e_4)\), \(\beta = \mathrm e_1 + \mathrm e_2\), \(\gamma = \mathrm e_3 + \mathrm e_4\) (we assume that $\rF_4$ is realized in $\mathbb{R}^4$ as in~\cite[Ch.~VI,\S~4.9]{Bou81}). Set \(\gamma_1 = \mathrm e_4\) and \(\gamma_2 = \mathrm e_3 - \mathrm e_4\). Since \(\beta + \gamma_1, \beta + \gamma_2, \beta + \gamma_1 + \gamma_2 \notin \Phi \cup \{0, 2\alpha\}\)
\begin{align*}
\bigl[x_\beta(b^{(\infty)}),\ x_\gamma(N_{\gamma_1, \gamma_2}^{2, 1} {c^{(\infty)}}^2 d^{(\infty)})\bigr] \\ = \bigl[x_\beta(b^{(\infty)}),\ x_{\gamma_1 + \gamma_2}(N_{\gamma_1, \gamma_2} c^{(\infty)} d^{(\infty)})\, x_\gamma(N_{\gamma_1, \gamma_2}^{2, 1} {c^{(\infty)}}^2 d^{(\infty)})\bigr] & \text{ by~\eqref{R2},\eqref{eq:comm-mult-rhs}}\\
= \bigl[x_\beta(b^{(\infty)}), \bigl[x_{\gamma_1}(c^{(\infty)}), x_{\gamma_2}(d^{(\infty)})\bigr]\bigr] = 1 &\text{ by~\eqref{R2},\eqref{R3},\eqref{eq:HW-corr}}.
\end{align*}
The morphism ${c^{(\infty)}}^2 d^{(\infty)} \colon R^{(\infty)} \times R^{(\infty)} \to R^{(\infty)}$ is a split epimorphism of pro-sets by an argument similar to the proof of~\cref{RingGeneration}. Thus, the identity \([x_\beta(b^{(\infty)}), x_\gamma(c^{(\infty)})] = 1\) is a consequence of the other defining relations for $\St^{(\infty)}(\Phi\setminus\{\alpha\}, R)$.
\end{proof}

There is a morphism of pro-groups
$F_\alpha \colon \St^{(\infty)}(\Phi \setminus\{\alpha\}, R) \to \St^{(\infty)}(\Phi, R).$
given by the pre-morphism $F_\alpha^{*} = \mathrm{id}_S$ and the homomorphisms \[F_\alpha^{(s)} \colon \St(\Phi \setminus\{\alpha\}, R^{(s)}) \to \St(\Phi, R^{(s)})\] induced by the obvious embedding of generators.

Notice that, in general, the individual homomorphisms $F_\alpha^{(s)}$ need {\it not} be isomorphisms (at least if $s \neq 1$). On the other hand, as the following result shows, the morphism of pro-groups $F_\alpha$ is an isomorphism. 
\begin{theorem}\label{SingleRootElimination} For every root \(\alpha \in \Phi\) the morphism $F_\alpha$ is an isomorphism of pro-groups. \end{theorem}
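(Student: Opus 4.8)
The plan is to construct an explicit inverse to $F_\alpha$ in the category of pro-groups. Since $F_\alpha$ is given by the obvious inclusion of generators, the content of the theorem is that after passing to the pro-group completion the ``missing'' root subgroups $x_{\pm\alpha}$ can be reconstructed from the remaining ones, at the cost of rescaling the homotope index. Concretely, I would build a pro-group morphism
\[ G_\alpha \colon \St^{(\infty)}(\Phi, R) \to \St^{(\infty)}(\Phi\setminus\{\pm\alpha\}, R) \]
and verify that $G_\alpha F_\alpha = \mathrm{id}$ and $F_\alpha G_\alpha = \mathrm{id}$. By \cref{SteinbergPresentation} (and its analogue \cref{rem:xpma-presentation}) it suffices to specify $G_\alpha$ on the generating morphisms $x_\gamma$ and to check that the images satisfy the pro-analogues of \eqref{R2}--\eqref{R4}.

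The first and main step is to define $G_\alpha$ on $x_{\pm\alpha}$. For $\gamma\neq\pm\alpha$ I set $G_\alpha x_\gamma = x_\gamma$, so the only real work is defining the image of $x_\alpha$. Following the introduction's outline, I would pick $\beta\in\Phi$ with $\alpha-\beta\in\Phi$ and $\beta,\alpha-\beta\neq\pm\alpha$, and define, as a pro-expression,
\[ G_\alpha\bigl(x_\alpha(a^{(\infty)})\bigr) = \bigl[x_{\alpha-\beta}(b^{(\infty)}),\ x_\beta(c^{(\infty)})\bigr] \]
where the product $bc$ reconstructs $a$ up to the structure constant; the rescaling of indices (the factor $t^3$ versus $t$ mentioned in the introduction) is absorbed into the index function and handled automatically by \cref{RingPresentation}, which guarantees that such a commutator factors through a single pro-group morphism $R^{(\infty)}\to\St^{(\infty)}(\Phi\setminus\{\pm\alpha\},R)$ provided the requisite bilinearity and associativity identities hold. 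I would use \cref{ThreeRoots} and \cref{DoubleRootElimination} here: the former supplies a decomposition $\alpha-\beta=\beta_1+\gamma_1$ with all roots of equal length lying outside $\ZZ\beta+\ZZ\gamma$, so that $x_{\alpha-\beta}$ itself can be expressed as a commutator of generators genuinely present in $\St^{(\infty)}(\Phi\setminus\{\pm\alpha\},R)$, and the latter lets me ignore roots inside the rank-$2$ subsystem when checking relations.

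The crux is well-definedness: I must show the commutator $[x_{\alpha-\beta}(b^{(\infty)}),x_\beta(c^{(\infty)})]$ is independent of the auxiliary root $\beta$, and that the resulting collection $\{G_\alpha x_\gamma\}$ satisfies the pro-analogues of the Steinberg relations involving $\alpha$ (which are precisely the relations omitted from $\St(\Phi\setminus\{\pm\alpha\},R)$). This is where I expect the difficulty to concentrate. Independence of $\beta$ I would establish via the Hall--Witt identity \eqref{eq:HW-corr}, rewriting both choices through a common refinement and appealing to the cocycle relation \eqref{eq:cocycle2} for the structure constants; the hypothesis that $\Phi\neq\rB_\ell,\rC_\ell$ ensures every relevant root decomposes into equal-length summands, which keeps all structure constants equal to $\pm1$ and makes the Hall--Witt bookkeeping manageable. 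The verification of \eqref{R3}--\eqref{R4} for $G_\alpha x_\alpha$ against the other generators amounts to standard commutator-formula manipulations, but the analogue of \eqref{R1} (additivity of $a^{(\infty)}\mapsto G_\alpha(x_\alpha(a^{(\infty)}))$) is exactly the kind of identity that \cref{RingPresentation} is designed to package, so I would route it through that lemma rather than checking it by hand.

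Finally, once $G_\alpha$ is a bona fide pro-group morphism, the two composites are checked on generators using \cref{SteinbergPresentation} and \cref{rem:xpma-presentation}. The composite $F_\alpha G_\alpha$ fixes $x_\gamma$ for $\gamma\neq\pm\alpha$ on the nose, and sends $x_\alpha(a^{(\infty)})$ to the commutator $[x_{\alpha-\beta}(b^{(\infty)}),x_\beta(c^{(\infty)})]$ computed now inside $\St^{(\infty)}(\Phi,R)$, which equals $x_\alpha(a^{(\infty)})$ by the genuine relation \eqref{R3}; hence $F_\alpha G_\alpha=\mathrm{id}$ by \cref{SteinbergPresentation}. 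Conversely $G_\alpha F_\alpha$ fixes every generator $x_\gamma$, $\gamma\neq\pm\alpha$, of $\St^{(\infty)}(\Phi\setminus\{\pm\alpha\},R)$ and is therefore the identity by \cref{rem:xpma-presentation}. The only genuinely delicate point throughout is controlling the index functions so that all these generator-level equalities hold after a single common refinement of the multiplicative system; the pro-group formalism of \cref{sec:pro-obj} is what makes this possible, since equalities need only hold after precomposition with sufficiently deep structure morphisms.
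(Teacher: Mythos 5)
Your proposal follows essentially the same route as the paper's own proof: define the missing generator as a commutator $[x_\beta(N_{\beta,\gamma}b^{(\infty)}), x_\gamma(c^{(\infty)})]$ for an equal-length decomposition $\alpha=\beta+\gamma$, package additivity and independence of the decomposition through \cref{RingPresentation}, \cref{ThreeRoots} and the Hall--Witt identity \eqref{eq:HW-corr}, verify the pro-analogues of \eqref{R2}--\eqref{R4} for the new morphism, and conclude by checking both composites on generators via \cref{SteinbergPresentation} and \cref{rem:xpma-presentation}. The one point you undersell is the $\rF_4$ case where $\widetilde x_\alpha$ occurs on the \emph{right-hand side} of \eqref{R3}--\eqref{R4} with roots of different lengths (the paper's \cref{lem:elim-rhs-r4,lem:elim-rhs-r3}): there the structure constants are not all $\pm1$, and identities such as $2\varepsilon_1=\varepsilon_2 N_{\beta,\gamma}$ must be extracted from \eqref{eq:cocycle2} inside $\rB_2$, $\rB_3$ and $\rC_3$ subsystems, so this step is substantially more than ``standard commutator-formula manipulations,'' though it does not change the architecture of your argument.
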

The proof of~\cref{SingleRootElimination} occupies the rest of this subsection.
Our immediate goal is to construct the root subgroup morphism
\[\widetilde x_{ \alpha} \colon R^{(\infty)} \to \St^{(\infty)}(\Phi \setminus \{ \alpha\}, R) \] ``missing'' from the presentation of $\St^{(\infty)}(\Phi\setminus\{\alpha\}, R)$. 

\begin{df} \label{def:xbg}
Let $\beta, \gamma \in \Phi$ be an arbitrary fixed pair of roots such that $\alpha = \beta+\gamma$ and $\alpha$, $\beta$, and $\gamma$ have the same length. 
Denote by $x_{\beta,\gamma}(b^{(\infty)}, c^{(\infty)})$ the morphism
 $[x_\beta(N_{\beta,\gamma}b^{(\infty)}),\ x_\gamma(c^{(\infty)})]\colon R^{(\infty)} \times R^{(\infty)} \to \St^{(\infty)}(\Phi \setminus\{\alpha\}, R).$
\end{df}
Below we will construct the morphism $\widetilde x_\alpha$ based on the morphism $x_{\beta,\gamma}$, see~\cref{lem:new-root}. However, in order to be able to do this we first need to prove the following assertion.

\begin{lemma} \label{lem:elim-lhs}
 The morphism $x_{\beta, \gamma}$ satisfies the following relations:
 \begin{multline} \label{R2-bg}
  [x_{\beta, \gamma}(b^{(\infty)}, c^{(\infty)}),\ x_\delta(d^{(\infty)})] = 1, \hfill \text{if $\alpha + \delta \not\in\Phi \cup \{0\},\ \delta\neq\alpha;$}
 \end{multline}
 \begin{multline} \label{R3-bg}
   [x_{\beta, \gamma}(b^{(\infty)}, c^{(\infty)}),\ x_\delta(d^{(\infty)})] = x_{\alpha + \delta}(N_{\alpha,\delta} \cdot b^{(\infty)}c^{(\infty)}d^{(\infty)}), \hfill \\ \text{if $\alpha+\delta\in\Phi$ but $\alpha+2\delta,\ 2\alpha+\delta\not\in\Phi;$}
 \end{multline}
 {\setlength{\abovedisplayskip}{0pt} \setlength{\belowdisplayskip}{0pt}
 \begin{multline} \label{R4-bg1}
  [x_{\beta, \gamma}(b^{(\infty)}, c^{(\infty)}),\ x_\delta(d^{(\infty)})] = \\ = x_{\alpha + \delta}(N_{\alpha,\delta} \cdot b^{(\infty)}c^{(\infty)}d^{(\infty)}) \cdot  x_{2\alpha+\delta}(N_{\alpha,\delta}^{2,1} \cdot {b^{(\infty)}}^2{c^{(\infty)}}^2d^{(\infty)}) \\
  \text{if $\alpha+\delta,2\alpha+\delta\in\Phi.$}
 \end{multline} 
 \begin{multline} \label{R4-bg2}
  [x_\delta(d^{(\infty)}), x_{\beta, \gamma}(b^{(\infty)}, c^{(\infty)})] = \\ = x_{\alpha + \delta}(N_{\delta, \alpha} \cdot b^{(\infty)}c^{(\infty)}d^{(\infty)}) \cdot x_{\alpha+2\delta}(N_{\delta, \alpha}^{2,1} \cdot {b^{(\infty)}}{c^{(\infty)}}{d^{(\infty)}}^2) \\
  \text{if $\alpha+\delta,\alpha+2\delta\in\Phi.$}
 \end{multline}}
\end{lemma}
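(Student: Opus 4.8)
The plan is to verify each of the four identities after reducing to a single group. By \cref{lem:proobj-is-a-limit} and \cref{conv:notation}, an equality of the pro-set morphisms in question holds as soon as the corresponding equality of group homomorphisms holds in $\St(\Phi\setminus\{\pm\alpha\}, R^{(s)})$ for all sufficiently large $s\in S$; I would therefore fix such an $s$, work with ordinary generators, and abbreviate $X=x_\beta(N_{\beta,\gamma}b)$, $Y=x_\gamma(c)$, $Z=x_\delta(d)$ and $P=[X,Y]$, the level-$s$ component of $x_{\beta,\gamma}$. The point to keep in mind throughout is that, since roots of equal length satisfy $N_{\beta,\gamma}^2=1$, the element $P$ plays exactly the role that the absent generator $x_\alpha(bc)$ plays in the full Steinberg group; accordingly \eqref{R2-bg}--\eqref{R4-bg2} are nothing but the Chevalley relations \eqref{R2}--\eqref{R4} that $x_\alpha(bc)$ satisfies, and the whole difficulty is to reprove them \emph{inside} $\St(\Phi\setminus\{\pm\alpha\}, R^{(s)})$, i.e. without ever writing down the forbidden letters $x_{\pm\alpha}$.

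The main computational tool is the Hall--Witt identity in the form \eqref{eq:HW-corr}, together with the expansion rules \eqref{eq:comm-mult-rhs}--\eqref{eq:comm-mult-lhs}. I would split into cases according to which of $\beta+\delta$, $\gamma+\delta$ lie in $\Phi$. In the typical case, where $x_\delta$ commutes with one of $X,Y$ --- say $\beta+\delta\notin\Phi\cup\{0\}$, so that $[X,Z]=1$ --- applying \eqref{eq:comm-mult-lhs} to $[XY,Z]$ and to $[PYX,Z]$ and comparing yields $[P,Z]={}^{P}[Z,Y]\cdot{}^{X}[Y,Z]$. Here $[Y,Z]=x_{\gamma+\delta}(N_{\gamma,\delta}cd)$ is a single root element, and expanding the two conjugations letter by letter using the pro-analogues of \eqref{R2}--\eqref{R4} produces only root elements indexed by $\gamma+\delta$ and by its sums with $\beta$ and $\gamma$ (in particular $\alpha+\delta=\beta+(\gamma+\delta)$, and in the two-length situation also $2\alpha+\delta$ or $\alpha+2\delta$), but never by $\pm\alpha$. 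The remaining cases are handled symmetrically, so that each left-hand side is rewritten as an explicit product of powers of $x_{\alpha+\delta}$, $x_{2\alpha+\delta}$ and $x_{\alpha+2\delta}$, with coefficients that are products of structure constants.

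It then remains to check that these coefficients add up to the asserted values, and this is where the cocycle identity \eqref{eq:cocycle2} is decisive. Applied to the triple $(\beta,\gamma,\delta)$ and combined with the convention that $N_{\mu,\nu}=0$ when $\mu+\nu\notin\Phi$ (which kills the contributions of non-root partial sums), it converts the representative product $N_{\beta,\gamma+\delta}N_{\gamma,\delta}$ into $N_{\alpha,\delta}N_{\beta,\gamma}$; together with $N_{\beta,\gamma}^2=1$ and the symmetries \eqref{eq:sc-ids-sl} this collapses the coefficient of $x_{\alpha+\delta}$ to exactly $N_{\alpha,\delta}$, establishing \eqref{R3-bg}, and an entirely parallel collapse produces the coefficients $N_{\alpha,\delta}^{2,1}$ and $N_{\delta,\alpha}^{2,1}$ of the quadratic terms in \eqref{R4-bg1}--\eqref{R4-bg2}. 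The identity \eqref{R2-bg} is the degenerate case $\alpha+\delta\notin\Phi$, in which every coefficient produced already vanishes, so that the left-hand side is trivial. Since all four statements are equalities of pro-set morphisms out of a power of $R^{(\infty)}$, \cref{RingGeneration} allows me to test them on the generating tuples, matching $a=bc$ and $a^2=b^2c^2$ with the corresponding terms of \eqref{R4}.

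The part I expect to be the genuine obstacle is twofold. First, one must ensure that the intrinsic computation never tacitly invokes the omitted relation. The threat comes from configurations in which a conjugation forced by \eqref{eq:HW-corr} would expand to contain the forbidden letter $x_{\beta+\gamma}=x_\alpha$: for instance when $\delta=-\gamma$, the element $x_\delta$ fails to commute with $x_\gamma$ and one is driven to form ${}^{Y}\!X$, whose expansion involves $x_\alpha$. I would neutralize such cases by first re-expressing $x_\beta(N_{\beta,\gamma}b)$ as a commutator $[x_{\beta_1}(\cdot),x_{\gamma_1}(\cdot)]$ with $\beta_1,\gamma_1$ of the same length and outside $\ZZ\beta+\ZZ\gamma$, as provided by \cref{ThreeRoots}, so that every elementary commutator formed thereafter is clear of $\pm\alpha$; length computations (such as $\alpha+\gamma=\beta+2\gamma$ having squared length $3|\gamma|^2$, hence not a root) conveniently eliminate several of the remaining degenerate configurations. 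Second, the two root lengths present in $\Phi=\rF_4$ force the elementary commutators to occasionally carry the quadratic tails of \eqref{R4}, and keeping track of whether $2\alpha+\delta$ or $\alpha+2\delta$ is a root --- according to whether $\alpha$ is short or long --- is exactly what distinguishes \eqref{R4-bg1} from \eqref{R4-bg2}. This case bookkeeping, rather than any single commutator calculation, is the most laborious aspect of the argument.
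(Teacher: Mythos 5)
Your proposal is correct in outline and shares the skeleton of the paper's proof: a dichotomy between generic $\delta$ and the dangerous degenerate cases, the Hall--Witt identity \eqref{eq:HW-corr}, the decomposition of $\beta$ into roots outside $\ZZ\beta+\ZZ\gamma$ via \cref{ThreeRoots} to keep the forbidden letters $x_{\pm\alpha}$ out of the computation, and \cref{RingGeneration} to pass from product arguments $b^{(\infty)}c^{(\infty)}$ to arbitrary ones. It diverges in two instructive ways. First, where you determine the coefficients by hand through the cocycle identity \eqref{eq:cocycle2}, the paper sidesteps the sign chase entirely: for $\delta\notin\ZZ\beta+\ZZ\gamma$ it notes that $\Sigma=\Phi\cap(\ZZ_{\geq0}\beta+\ZZ_{\geq0}\gamma+\ZZ_{>0}\delta)$ is a special subset avoiding $\pm\alpha$, $-\beta$, $-\gamma$, expands the commutator into a product $\prod x_{i\beta+j\gamma+k\delta}(A_{i,j,k}\cdots)$ with \emph{undetermined} integers, and then pins down the $A_{i,j,k}$ by running the identical expansion on $[x_\alpha(1),x_\delta(1)]$ in $\St(\Phi,\ZZ)$, where uniqueness of the unipotent factorization (\cref{rem:uni-rad}) forces them to equal the structure constants of \eqref{R2}--\eqref{R4}; in the paper, \eqref{eq:cocycle2} only enters the later \cref{lem:new-root,lem:elim-rhs-r4,lem:elim-rhs-r3}. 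Your explicit cocycle computation would also work, but it is markedly more error-prone in $\rF_4$, and the paper's ``universal coefficient'' trick is what makes all out-of-plane $\delta$ tractable uniformly. Second, your primary case split --- whether $Z=x_\delta(d)$ commutes with $X$ or with $Y$ --- is not exhaustive up to symmetry: in $\rF_4$ there exist out-of-plane $\delta$ with \emph{both} $\beta+\delta$ and $\gamma+\delta$ roots (e.g.\ $\beta=\mathrm e_4$, $\gamma=\tfrac12(\mathrm e_1-\mathrm e_2-\mathrm e_3-\mathrm e_4)$, $\delta=\mathrm e_2$), where your identity $[P,Z]={}^{P}[Z,Y]\cdot{}^{X}[Y,Z]$ is unavailable since it needs $[X,Z]=1$. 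There you must fall back on the unconditional letter-by-letter expansion of ${}^{XYX^{-1}Y^{-1}}Z\cdot Z^{-1}$, which is exactly what the paper does for \emph{every} out-of-plane $\delta$; since this repair lies within your own toolkit, it is a rough edge rather than a genuine gap. Your handling of the in-plane cases $\delta\in\{\pm\beta,\pm\gamma\}$ via \cref{ThreeRoots} and \eqref{eq:HW-corr} matches the paper's argument essentially exactly.
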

\begin{proof}
 First of all, observe that the above relations are obtained from \eqref{R2}--\eqref{R4} by replacing $x_\alpha(a^{(\infty)})$ with $x_{\beta, \gamma}(b^{\infty}, c^{\infty})$.
 
 Let us first consider the case \(\delta \in \Phi \setminus (\ZZ \beta + \ZZ \gamma)\).
 We claim that the relations between the remaining root subgroup morphisms of $\St^{(\infty)}(\Phi\setminus\{\alpha\}, R)$ suffice to rewrite the left-hand side of any of the relations~\eqref{R2-bg}--\eqref{R4-bg2} as follows:
 \begin{align}
  &\bigl[x_{\beta, \gamma}(b^{(\infty)}, c^{(\infty)}), x_\delta(d^{(\infty)})\bigr] \nonumber \\
  &= \up{x_\beta(N_{\beta, \gamma} b^{(\infty)})
   x_\gamma(c^{(\infty)})
   x_\beta(-N_{\beta, \gamma} b^{(\infty)})
   x_\gamma(-c^{(\infty)})}
  {x_\delta(d^{(\infty)})}\,
  x_\delta(-d^{(\infty)}) \label{eq:elim-lhs_1} \\
  &= \prod_{\substack{i\beta + j\gamma + k\delta \in \Phi\\ i, j \geq 0; k > 0}}
  x_{i\beta + j\gamma + k\delta} \bigl(A_{i, j, k} {b^{(\infty)}}^i {c^{(\infty)}}^j {d^{(\infty)}}^k\bigr). \nonumber
 \end{align}
 Indeed, observe that the root subset $\Sigma = \Phi \cap (\ZZ_{\geq 0}\beta + \ZZ_{\geq 0}\gamma + \ZZ_{> 0}\delta)$ is special and does not contain $-\beta$, $-\gamma$ or $\alpha$. Thus, we can iteratively simplify the expression for the conjugate of $x_\delta$ in the above formula obtaining a product of root subgroup morphisms corresponding to a subset of roots of $\Sigma$ at each step.
 The integers \(A_{i, j, k}\) in~\eqref{eq:elim-lhs_1} depend only on the roots \(\beta\), \(\gamma\), \(\delta\) (a priori they also depend on the chosen order of factors). To determine them one can compute the commutator $[x_\alpha(1),\ x_\delta(1)]$ in $\St(\Phi, \ZZ)$ via the same procedure as in~\eqref{eq:elim-lhs_1}:
 \begin{equation} \label{eq:elim-lhs_2}
  [x_\alpha(1), x_\delta(1)] = \prod_{\substack{i\beta + j\gamma + k\delta \in \Phi\\ i, j \geq 0; k > 0}}
  x_{i\beta + j\gamma + k\delta}(A_{i, j, k}).
 \end{equation}
  By~\cref{rem:uni-rad} the integers $A_{i,j,k}$ are uniquely determined by~\eqref{eq:elim-lhs_2}.
  It is easy to see that, depending on the relative position of $\alpha$ and $\delta$, the integers $A_{i,j,k}$ coincide with the structure constants in the right-hand sides of~\eqref{R2},~\eqref{R3} or~\eqref{R4}, in particular, $A_{i,j,k}=0$ for $i\neq j$.
  Thus, \eqref{R2-bg}--\eqref{R4-bg2} follow from~\eqref{eq:elim-lhs_1} in the specified case.
 
 It remains to verify Steinberg relations in the case \(\delta \in \Phi \cap (\ZZ \beta + \ZZ \gamma)\).
 By~\cref{ThreeRoots} there exist roots \(\beta_1, \beta_2 \in \Phi \setminus (\ZZ \beta + \ZZ \gamma)\) such that \(|\beta_1| = |\beta_2| = |\beta|\) and \(\beta = \beta_1 + \beta_2\). 
 Recall that the root subsystem $\Phi_0 \subseteq \Phi$ containing $\beta,\gamma,\beta_1,\beta_2$ is of type $\rA_3$ or $\rC_3$. 
 In both cases we may assume that $\alpha + \beta_2 \not\in\Phi$.
 
 Let us first verify the relation~\eqref{R2-bg} in the case $\delta \in \{\beta, \gamma\}$. By symmetry we may assume that $\delta = \beta$. 
 Direct computation shows that
 \begin{align}
  [x_{\beta,\gamma}(b^{(\infty)}, c^{(\infty)}),\ x_\beta(N_{\beta_1, \beta_2} b_1^{(\infty)}b_2^{(\infty)})] \nonumber \\
   = [x_{\beta,\gamma}(b^{(\infty)}, c^{(\infty)}),\ [x_{\beta_1}(b_1^{(\infty)}),\ x_{\beta_2}(b_2^{(\infty)})]]  & \text{ by~\eqref{R3}} \label{eq:elim-lhs3} \\ 
   = [[x_{\beta, \gamma}(b^{(\infty)}, c^{(\infty)}),\ x_{\beta_1}(b_1)],\ \up{x_{\beta_1}(b_1^{(\infty)})} x_{\beta_2}(b_2^{(\infty)})]  & \text{ by~\eqref{eq:HW-corr},\eqref{R2-bg}.} \nonumber \end{align}
In the case $\Phi_0\cong \rA_3$ the inner commutator in the last expression is trivial by~\eqref{R2-bg},
 therefore so is the outer commutator (notice that $\alpha$ and $\beta_1$ form an acute angle).
In the case $\Phi_0\cong \rC_3$ the last expression can be further simplified using~\eqref{R3} and the already proved relation~\eqref{R3-bg} as follows:
\[ \ldots = [x_{\alpha+\beta_1}(N_{\alpha, \beta_1} b^{(\infty)}c^{(\infty)}b_1^{(\infty)}),\ x_{\beta}(N_{\beta_1, \beta_2} b_1^{(\infty)} b_2^{(\infty)}) x_{\beta_2}(b_2^{(\infty)})]. \]
Since $\alpha+\beta_1$ forms an acute angle with both $\beta$ and $\beta_2$, the latter commutator is trivial by~\eqref{eq:comm-mult-rhs} and~\eqref{R2-bg}.  
  In both cases the right-hand side of~\eqref{eq:elim-lhs3} is trivial. Consequently, from~\cref{RingGeneration} we obtain the equality
 $[x_{\beta,\gamma}(b^{(\infty)}, c^{(\infty)}),\ x_\beta(d^{(\infty)})] = 1.$
 Thus, we have verified~\eqref{R2-bg} in all cases. 
 
 Finally, it remains to verify~\eqref{R3-bg} in the case $\delta \in \{-\beta, -\gamma\}$.
 By symmetry it suffices to consider the case $\delta = -\beta$.
 From the already proved relations~\eqref{R2-bg}--\eqref{R3-bg} and the other Steinberg relations we obtain
 \begin{align*}
  &[x_{\beta, \gamma}(b^{(\infty)}, c^{(\infty)}),
  x_{-\beta}(N_{-\beta_1, -\beta_2} b_1^{(\infty)} b_2^{(\infty)})] \\
  &= [\up{x_{\beta, \gamma}(b^{(\infty)}, c^{(\infty)})}
   {x_{-\beta_1}(b_1^{(\infty)})},
  \up{x_{\beta, \gamma}(b^{(\infty)}, c^{(\infty)})}
   {x_{-\beta_2}(b_2^{(\infty)})}]\,
  x_{-\beta}(-N_{-\beta_1, -\beta_2} b_1^{(\infty)} b_2^{(\infty)})\\
  &= \prod_{\substack{i\alpha - j\beta_1 - k\beta_2 \in \Phi\\ i, j, k \geq 0}}
  x_{i\alpha - j\beta_1 - k\beta_2}\bigl(B_{i, j, k} {b^{(\infty)}}^i {c^{(\infty)}}^i {b_1^{(\infty)}}^j {b_2^{(\infty)}}^k\bigr),  
 \end{align*}
where $B_{i,j,k}$ are certain integers.
Direct computation (or an argument similar to the one used in the first part of the proof) shows that the constants \(B_{i, j, k}\) are all zero with the sole exception of \(B_{1, 1, 1}$, which is equal to $N_{\alpha, -\beta}\). Consequently, from~\cref{RingGeneration} we obtain
 \[[x_{\beta, \gamma}(b^{(\infty)}, c^{(\infty)}), x_{-\beta}(d^{(\infty)})] = x_\gamma(N_{\alpha, -\beta} b^{(\infty)} c^{(\infty)} d^{(\infty)}),\]
which finishes the proof of~\eqref{R3-bg}.
\end{proof}

\begin{lemma}\label{lem:new-root}
 The morphism $x_{\beta, \gamma}$ satisfies the identities listed in~\cref{RingPresentation} and therefore gives rise to a pro-group morphism $R^{(\infty)} \to \St^{(\infty)}(\Phi\setminus\{\alpha\}, R),$ which we denote by $\widetilde{x}_\alpha$. The resulting morphism $\widetilde{x}_\alpha$ does not depend on the choice of $\beta$ and $\gamma$.
\end{lemma}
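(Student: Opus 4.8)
The plan is to verify the four identities of \cref{RingPresentation} for $g = x_{\beta,\gamma}$ and then to establish independence of the resulting morphism $\widetilde{x}_\alpha$ from the choice of $\beta,\gamma$. Throughout I would freely use the two special cases $\delta\in\{\beta,\gamma\}$ of \eqref{R2-bg} already proved in \cref{lem:elim-lhs}, namely that $x_{\beta,\gamma}$ commutes with both $x_\beta$ and $x_\gamma$; these are the only inputs needed for the first three identities.

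For the two bilinearity identities I would expand $x_{\beta,\gamma}(a_1^{(\infty)}+a_2^{(\infty)}, b^{(\infty)}) = [x_\beta(N_{\beta,\gamma}a_1^{(\infty)})\,x_\beta(N_{\beta,\gamma}a_2^{(\infty)}),\ x_\gamma(b^{(\infty)})]$ via \eqref{eq:comm-mult-lhs} and discard the resulting conjugation using $[x_{\beta,\gamma}(\cdot,\cdot), x_\beta(\cdot)]=1$; the identity additive in the second slot is symmetric, via \eqref{eq:comm-mult-rhs} and $[x_{\beta,\gamma}(\cdot,\cdot), x_\gamma(\cdot)]=1$. The commutativity identity follows from \eqref{eq:HW-corr}: since $x_{\beta,\gamma}(b_1^{(\infty)},c_1^{(\infty)})$ commutes with $x_\gamma(c_2^{(\infty)})$, applying \eqref{eq:HW-corr} to $[x_{\beta,\gamma}(b_1^{(\infty)},c_1^{(\infty)}),\ [x_\beta(N_{\beta,\gamma}b_2^{(\infty)}), x_\gamma(c_2^{(\infty)})]]$ rewrites the outer commutator as one whose first entry is $[x_{\beta,\gamma}(b_1^{(\infty)},c_1^{(\infty)}), x_\beta(N_{\beta,\gamma}b_2^{(\infty)})]=1$, so the whole expression is trivial.

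The crux is the associativity identity $x_{\beta,\gamma}(a^{(\infty)}b^{(\infty)}, c^{(\infty)}) = x_{\beta,\gamma}(a^{(\infty)}, b^{(\infty)}c^{(\infty)})$. Here I would first invoke \cref{ThreeRoots} to split $\beta=\beta_1+\gamma_1$ with $\beta_1,\gamma_1\notin\ZZ\beta+\ZZ\gamma$ of the same length; from the geometry of the rank-$3$ subsystem $\Phi_0$ one checks (possibly after interchanging $\beta_1$ and $\gamma_1$) that $\gamma'':=\gamma_1+\gamma\in\Phi$, while $\beta_1+\gamma$, $\beta+\gamma''$ and $\alpha+\gamma''$ all lie outside $\Phi\cup\{0\}$. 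By \cref{RingGeneration} it suffices to check the identity after replacing the first argument by a genuine product, so I may rewrite $x_\beta(N_{\beta,\gamma}\cdot)$ of that product as a commutator $[x_{\beta_1}(\cdot), x_{\gamma_1}(\cdot)]$ via \eqref{R3}. The key step is the re-association
\[ [[x_{\beta_1}(P^{(\infty)}), x_{\gamma_1}(Q^{(\infty)})],\ x_\gamma(W^{(\infty)})] = [x_{\beta_1}(P^{(\infty)}),\ x_{\gamma''}(N_{\gamma_1,\gamma}\,Q^{(\infty)}W^{(\infty)})], \]
which I would obtain from \eqref{eq:HW-corr} (using $[x_{\beta_1}(P^{(\infty)}), x_\gamma(W^{(\infty)})]=1$) followed by \eqref{eq:comm-mult-rhs}, the conjugations collapsing because $x_\beta(\cdot)$ commutes with $x_{\gamma''}(\cdot)$ and $x_{\beta,\gamma}(\cdot,\cdot)$ commutes with $x_{\gamma''}(\cdot)$ by \eqref{R2-bg}. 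Applying this formula to both sides collapses them to a single commutator $[x_{\beta_1}(\cdot), x_{\gamma''}(\cdot)]$ whose $\gamma''$-entry is $N_{\gamma_1,\gamma}$ times a triple product of pro-rng elements, and the two sides agree by associativity of pro-rng multiplication. This produces $\widetilde{x}_\alpha$.

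Finally, the same re-association yields independence: rewriting $[x_{\beta_1}(\cdot), x_{\gamma''}(\cdot)]$ backwards identifies the morphism built from $(\beta_1,\gamma'')$ with the one built from $(\beta,\gamma)$ (again checking on products via \cref{RingGeneration}), and since any two admissible decompositions of $\alpha$ into two equal-length roots of $\Phi_0$ are linked by a chain of such elementary moves, this gives full independence. I expect the main obstacle to be essentially bookkeeping: verifying that all the auxiliary root sums avoid $\Phi$ so that the Hall--Witt conjugations in the re-association genuinely vanish, and tracking the signs of the structure constants (via \eqref{eq:sc-ids-sl} and the cocycle relation \eqref{eq:cocycle2}) so that they cancel and the two decompositions give literally the same morphism rather than ones differing by a sign.
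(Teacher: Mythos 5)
Your handling of the first three identities of \cref{RingPresentation} is correct and agrees with the paper's argument, and in the simply-laced cases ($\Phi_0 \cong \rA_3$) your re-association for the fourth identity is sound: it is essentially the paper's identity \eqref{eq:balance}, with your pair $(\beta,\gamma)$, $(\beta_1,\gamma'')$ being two decompositions of $\alpha$ connected by $\gamma_1$ in place of the paper's $\delta$. The genuine gap is the case $\Phi_0 \cong \rC_3$, i.e. $\Phi = \rF_4$ with $\alpha$ short --- exactly the case this paper exists to treat. Your geometric claim that, possibly after interchanging $\beta_1$ and $\gamma_1$, one has $\gamma'' := \gamma_1 + \gamma \in \Phi$ while $\beta_1 + \gamma \notin \Phi \cup \{0\}$ is false there. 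Concretely, realize $\Phi_0 \cong \rC_3$ as in the paper and take $\beta = \mathrm e_1 - \mathrm e_3$, $\gamma = \mathrm e_2 + \mathrm e_3$, $\alpha = \mathrm e_1 + \mathrm e_2$ (all short). The only decomposition of $\beta$ into equal-length roots outside $\ZZ\beta + \ZZ\gamma$ is $\{\mathrm e_1 - \mathrm e_2,\ \mathrm e_2 - \mathrm e_3\}$ (the other candidate $\{\mathrm e_1 + \mathrm e_2,\ -\mathrm e_2 - \mathrm e_3\}$ contains $\alpha$, lies in the plane, and degenerates). For $\beta_1 = \mathrm e_1 - \mathrm e_2$ one gets $\beta_1 + \gamma = \mathrm e_1 + \mathrm e_3 \in \Phi$, and for $\beta_1 = \mathrm e_2 - \mathrm e_3$ one gets $\beta_1 + \gamma = 2\mathrm e_2 \in \Phi$; so in either order the hypothesis $[x_{\beta_1}(\cdot), x_\gamma(\cdot)] = 1$ needed to apply \eqref{eq:HW-corr} fails, and your key re-association step breaks down (moreover, in the first option $\gamma'' = 2\mathrm e_2$ is long, so $[x_{\beta_1}, x_{\gamma''}]$ is governed by an \eqref{R4}-type relation with an extra factor in any case). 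Working directly in $\rF_4$ with $\alpha$ short, e.g. $\alpha = \mathrm e_1$, the same failure occurs for every admissible decomposition.

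The paper circumvents precisely this obstruction by a different placement of the Hall--Witt identity: it compares two decompositions of $\alpha$ itself, $\alpha = \beta_1 + \gamma_1 = \beta_2 + \gamma_2$, sets $\delta = \beta_2 - \beta_1 = \gamma_1 - \gamma_2$ (which in the $\rC_3$ model is the \emph{long} root $2\mathrm e_2$), expands $x_{\gamma_1}$ as $[x_\delta, x_{\gamma_2}]$ up to an \eqref{R4}-correction, and applies \eqref{eq:HW-corr} to $[x_{\beta_1}, [x_\delta, x_{\gamma_2}]]$; the vanishing this requires is $[x_{\beta_1}, x_{\gamma_2}] = 1$, which does hold ($\beta_1 + \gamma_2 = \mathrm e_1 - 2\mathrm e_2 - \mathrm e_3 \notin \Phi$), and the long-root correction factors are discarded via \eqref{R2} and \eqref{eq:comm-mult-rhs}. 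The resulting identity \eqref{eq:balance} delivers both the fourth identity of \cref{RingPresentation} and the independence of $\widetilde x_\alpha$ from the decomposition in one stroke; this also removes the need for your chain-of-elementary-moves argument for independence, which you left unverified. Any repair of your proof would have to reproduce something equivalent: in the $\rC_3$ case one cannot stay among equal-length roots, and a long connecting root together with the relations \eqref{R4} must enter the computation.
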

\begin{proof}
 First of all, observe that~\eqref{R2-bg} implies the identity \[[x_{\beta, \gamma}(b_1^{(\infty)}, c_1^{(\infty)}),
 x_{\beta, \gamma}(b_2^{(\infty)}, c_2^{(\infty)})] = 1.\] 
 Notice also that~\eqref{eq:comm-mult-lhs} and~\eqref{R2-bg} imply that
 \begin{align*}
 x_{\beta, \gamma}\bigl(b_1^{(\infty)} + b_2^{(\infty)}, c^{(\infty)}\bigr)
 = \bigl[x_\beta(N_{\beta, \gamma} b_1^{(\infty)})
 x_\beta(N_{\beta, \gamma} b_2^{(\infty)}),
 x_\gamma(c^{(\infty)})\bigr]\\
 = x_{\beta, \gamma}(b_1^{(\infty)}, c^{(\infty)})\,
 x_{\beta, \gamma}(b_2^{(\infty)}, c^{(\infty)}).
 \end{align*}
 Similarly, one can show the equality
 \[x_{\beta, \gamma}\bigl(b^{(\infty)}, c_1^{(\infty)} + c_2^{(\infty)}\bigr)
 = x_{\beta, \gamma}(b^{(\infty)}, c_1^{(\infty)})\,
 x_{\beta, \gamma}(b^{(\infty)}, c_2^{(\infty)}).\]
 Thus, the morphism \(x_{\beta, \gamma}\) satisfies the first three requirements of~\cref{RingPresentation}. 
 
 Now suppose that \(\alpha = \beta_1 + \gamma_1 = \beta_2 + \gamma_2\) are two different decompositions for $\alpha$ such that $\alpha$, $\beta_i$ and $\gamma_i$ have the same length and \(\alpha\), \(\beta_1\), \(\beta_2\) are linearly independent. By~\cref{ThreeRoots} such decompositions exist and the roots $\beta_i$, $\gamma_i$ are contained in a root subsystem $\Phi_0$ of type \(\rA_3\) or \(\rC_3\). Swapping $\beta_1$ with $\gamma_1$, if necessary, we also may assume that $\beta_1$ and $\beta_2$ form an acute angle. Set $\delta = \gamma_1 - \gamma_2 = \beta_2 - \beta_1$ and \[\varepsilon_1 = N_{\beta_1, \gamma_1},\ \varepsilon_2 = N_{\beta_2, \gamma_2},\ \varepsilon_3 = N_{\delta,\gamma_2},\ \varepsilon_4 = N_{\beta_1, \delta}.\]
 
In order to simplify keeping track of the relative angles and the length of the roots, the reader may assume that  $\beta_i$, $\gamma_i$ are concretely realized in the space $\mathbb{R}^3$ as follows:
\(\beta_1 = \mathrm e_1 - \mathrm e_2,\) \(\gamma_1 = \mathrm e_2 - \mathrm e_3\) and
 \begin{itemize}
 \item $\beta_2 = \mathrm e_1 - \mathrm e_4$, $\gamma_2 = \mathrm e_4 - \mathrm e_3$, $\delta = \mathrm e_2 - \mathrm e_4$ in the case \(\Phi_0\cong\rA_3\), \item $\beta_2 = \mathrm e_1 + \mathrm e_2$, $\gamma_2 = -\mathrm e_2 - \mathrm e_3$, $\delta = 2 \mathrm e_2$ in the case \(\Phi_0\cong\rC_3\).
\end{itemize}
Here $\mathrm e_i$ denote the standard basis vectors of $\mathbb{R}^3$ and we assume that the root systems $\rA_3$ and $\rC_3$ are realized in $\mathbb{R}^3$ as in~\cite[Ch.~VI,~\S\S~4.6--4.7]{Bou81}. 

 Substituting the triple $(\beta_1, \gamma_1, -\beta_2)$ into~\eqref{eq:cocycle2}, it is not hard to conclude that in both cases one has $\varepsilon_1 \varepsilon_2 \varepsilon_3 \varepsilon_4 = 1$. Now direct calculation shows that
 \begin{align*}
x_{\beta_1, \gamma_1}(b^{(\infty)}, c^{(\infty)} d^{(\infty)}) = [x_{\beta_1}(\varepsilon_1b^{(\infty)}), x_{\gamma_1}(c^{(\infty)} d^{(\infty)})] & \nonumber \\
= [x_{\beta_1}(\varepsilon_1b^{(\infty)}), [x_{\delta}(\varepsilon_3c^{(\infty)}), x_{\gamma_2}(d^{(\infty)})]]&\\
\text{  by~\eqref{R3} if } \Phi_0 \cong \rA_3 \text{ and by ~\eqref{R2},\eqref{R4},\eqref{eq:comm-mult-rhs} if } \Phi_0 &\cong \rC_3 \nonumber \\ 
= [[x_{\beta_1}(\varepsilon_1b^{(\infty)}),\ x_\delta(\varepsilon_3c^{(\infty)})],\ {}^{x_\delta(\varepsilon_3c^{(\infty)})}x_{\gamma_2}(d^{(\infty)})] & \text{ by~\eqref{R2},\eqref{eq:HW-corr}}\\
= [x_{\beta_2}(\varepsilon_2b^{(\infty)}c^{(\infty)}),\ x_{\gamma_2}(d^{(\infty)}) x_{\gamma_1}(c^{(\infty)}d^{(\infty)}) ] & \text{ by~\eqref{R3}}\\
= [x_{\beta_2}(\varepsilon_2b^{(\infty)}c^{(\infty)}),\ x_{\gamma_2}(d^{(\infty)})] &\text{ by~\eqref{R2},\eqref{eq:comm-mult-rhs}} \\
= x_{\beta_2, \gamma_2}(b^{(\infty)} c^{(\infty)}, d^{(\infty)}). & \end{align*}

 Thus, in both cases we obtain the equality
 \begin{equation}\label{eq:balance}
 x_{\beta_1, \gamma_1}(b^{(\infty)}, c^{(\infty)} d^{(\infty)}) = x_{\beta_2, \gamma_2}(b^{(\infty)} c^{(\infty)}, d^{(\infty)}).
 \end{equation}
 It follows from~\eqref{eq:balance} that 
 \[x_{\beta_1,\gamma_1}(b^{(\infty)}, u_1^{(\infty)}u_2^{(\infty)}d^{(\infty)}) = x_{\beta_1,\gamma_1}(b^{(\infty)}u_1^{(\infty)}u_2^{(\infty)},d^{(\infty)})\] therefore by~\cref{RingGeneration}
 $x_{\beta_1, \gamma_1}(b^{(\infty)}, u^{(\infty)} c^{(\infty)}) = x_{\beta_1, \gamma_1}(b^{(\infty)} u^{(\infty)}, c^{(\infty)}).$
 Thus, the last requirement of~\cref{RingPresentation} is satisfied. Consequently, there exists a unique morphism of pro-groups \(\widetilde x_\alpha \colon R^{(\infty)} \to \St^{(\infty)}(\Phi \setminus \{\alpha\}, R)\) satisfying
 \begin{equation} \label{eq:new-root} x_{\beta_1, \gamma_1}(b^{(\infty)}, c^{(\infty)}) = \widetilde x_\alpha(b^{(\infty)} c^{(\infty)}) = x_{\beta_2, \gamma_2}(b^{(\infty)}, c^{(\infty)}). \qedhere\end{equation}
\end{proof}

The next step in the proof of~\cref{SingleRootElimination} is to verify that $\widetilde{x}_\alpha$ satisfies the relations~\eqref{R2}--\eqref{R4}.
Combining \cref{lem:elim-lhs,lem:new-root} we immediately obtain the following.
\begin{corollary} \label{cor:elim-lhs}
 The root subgroup morphism $\widetilde{x}_\alpha$ satisfies the pro-analogues of Steinberg relations of type~\eqref{R2}--\eqref{R4}, in which it occurs on the left-hand side.
\end{corollary}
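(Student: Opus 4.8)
The plan is to obtain the corollary as a formal consequence of \cref{lem:new-root}, \cref{lem:elim-lhs}, and the split epimorphism property of the rng multiplication recorded in \cref{RingGeneration}. The pro-analogues of \eqref{R2}--\eqref{R4} in which $\widetilde x_\alpha$ occupies the left slot of the commutator are exactly the equalities of pro-set morphisms
\[ [\widetilde x_\alpha(a^{(\infty)}),\ x_\delta(d^{(\infty)})] = (\text{the appropriate right-hand side}), \]
in the two variables $a^{(\infty)}, d^{(\infty)}$, one for each admissible $\delta$; I would prove them uniformly.

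The engine of the reduction is the factorization $\widetilde x_\alpha(b^{(\infty)} c^{(\infty)}) = x_{\beta, \gamma}(b^{(\infty)}, c^{(\infty)})$ supplied by \cref{lem:new-root} for the fixed pair $\beta, \gamma$ of \cref{def:xbg}. To check such an identity of morphisms $R^{(\infty)} \times R^{(\infty)} \to \St^{(\infty)}(\Phi\setminus\{\pm\alpha\}, R)$, it suffices to precompose the $a^{(\infty)}$-slot with the multiplication $m\colon R^{(\infty)} \times R^{(\infty)} \to R^{(\infty)}$, that is, to replace $a^{(\infty)}$ by $b^{(\infty)} c^{(\infty)}$. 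This is legitimate because, by \cref{RingGeneration}, $m$ is a split epimorphism of pro-sets; split epimorphisms are stable under products and are in particular epimorphisms, so $m \times \mathrm{id}_{R^{(\infty)}}$ is epic and any equality surviving this precomposition already holds on the nose.

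After the substitution the left-hand side turns into $[x_{\beta, \gamma}(b^{(\infty)}, c^{(\infty)}),\ x_\delta(d^{(\infty)})]$, whereas each right-hand side becomes, upon replacing $a^{(\infty)}$ by $b^{(\infty)} c^{(\infty)}$, precisely the right-hand side of the matching identity of \cref{lem:elim-lhs}: the trivial word in the case of \eqref{R2-bg}, the single monomial $x_{\alpha+\delta}(N_{\alpha,\delta} b^{(\infty)} c^{(\infty)} d^{(\infty)})$ in the case of \eqref{R3-bg}, and the two-factor product in the case of \eqref{R4-bg1}. Thus every relation to be verified reduces term by term to one already established in \cref{lem:elim-lhs}, and the corollary follows. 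I expect no genuine obstacle here, since the substantive computations were carried out in \cref{lem:elim-lhs}; the only point demanding attention is the bookkeeping of the epimorphism argument, namely ensuring that the precomposition touches only the $a^{(\infty)}$-variable and that the integer structure constants on the right-hand sides are unaffected by it.
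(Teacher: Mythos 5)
Your argument is correct and is essentially the paper's own: the paper obtains \cref{cor:elim-lhs} by combining \cref{lem:elim-lhs} with \cref{lem:new-root}, and the mechanism you make explicit --- precomposing the $a^{(\infty)}$-slot with the multiplication $m$, which is legitimate because $m$ is a split epimorphism by \cref{RingGeneration}, hence $m\times\mathrm{id}$ is epic --- is exactly what justifies the paper's ``immediately''. One small bookkeeping correction: the corollary also covers the \eqref{R4}-type relations in which $\widetilde x_\alpha$ occupies the \emph{second} slot of the commutator (the case $\alpha+\delta,\,\alpha+2\delta\in\Phi$), so your term-by-term matching should additionally invoke \eqref{R4-bg2} of \cref{lem:elim-lhs}, to which the identical substitution argument applies verbatim.
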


Thus, to finish the proof of~\cref{SingleRootElimination} it suffices to consider the case where $\widetilde{x}_\alpha$ occurs in the right-hand side of the commutator formula.
\begin{lemma} \label{lem:elim-rhs-r4}
 The root subgroup morphism $\widetilde{x}_\alpha$ satisfies the pro-analogues of Steinberg relations of type~\eqref{R4} in which it occurs on the right-hand side.
\end{lemma}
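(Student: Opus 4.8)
The plan is to mimic the commutator computation in the proof of \cref{lem:elim-lhs}. First I would note that a relation of type~\eqref{R4} can occur only when $\Phi$ has roots of two different lengths; since $\rB_\ell$, $\rC_\ell$ and $\rG_2$ are excluded, this forces $\Phi=\rF_4$, and in the simply-laced cases $\rA_{\geq3}$, $\rD_{\geq4}$, $\rE_{6,7,8}$ there are no relations of type~\eqref{R4} at all, so the statement is vacuous. Thus I may assume $\Phi=\rF_4$. A relation of type~\eqref{R4} in which $\widetilde x_\alpha$ occurs on the right has the form
\[ [x_\delta(d^{(\infty)}),\, x_\epsilon(e^{(\infty)})] = x_{\delta+\epsilon}(N_{\delta,\epsilon}\, d^{(\infty)} e^{(\infty)})\cdot x_{2\delta+\epsilon}(N_{\delta,\epsilon}^{2,1}\, {d^{(\infty)}}^2 e^{(\infty)}), \]
with $\delta$ short, $\epsilon$ long and $\delta,\epsilon\neq\pm\alpha$, where exactly one of the output roots $\delta+\epsilon$ (short) and $2\delta+\epsilon$ (long) equals $\alpha$ and the corresponding factor is reread through $\widetilde x_\alpha$. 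Replacing $\alpha$ by $-\alpha$ if necessary, two configurations remain: (A) $\alpha=\delta+\epsilon$ is short, and (B) $\alpha=2\delta+\epsilon$ is long. In either configuration the other output root differs from $\pm\alpha$, hence is a genuine generator of $\St^{(\infty)}(\rF_4\setminus\{\pm\alpha\},R)$, while the cases $\delta\in\{\pm\alpha\}$ or $\epsilon\in\{\pm\alpha\}$ are precisely those already settled in \cref{cor:elim-lhs}.

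In each configuration the idea is to make $\widetilde x_\alpha$ surface as a commutator $[x_\beta,x_\gamma]$ of two roots of the same length as $\alpha$ with $\beta+\gamma=\alpha$; such a decomposition exists by \cref{ThreeRoots}, and by \cref{lem:new-root} any one may be used. Following the template of \cref{lem:elim-lhs}, I would decompose one of the commutator arguments into a commutator of two equal-length roots via~\eqref{R3} and \cref{ThreeRoots} --- the long root $\epsilon$ in configuration (A), the short root $\delta$ in configuration (B) --- chosen to lie outside the plane $\ZZ\delta+\ZZ\epsilon$ and to avoid $\pm\alpha$, and then reorganize the resulting iterated commutator with the Hall--Witt identity~\eqref{eq:HW-corr} (schematically $[x,[y,z]]=[[x,y],\up{y}{z}]$ when $[x,z]=1$) together with~\eqref{eq:comm-mult-lhs} and~\eqref{eq:comm-mult-rhs}. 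After this reorganization every inner commutator involves only genuine roots, or $\widetilde x_\alpha$ paired with a genuine root --- the latter already computed in \cref{cor:elim-lhs} --- while one distinguished inner commutator is a product of two equal-length roots summing to $\alpha$, i.e. a value of $\widetilde x_\alpha$. The quadratic factor ${d^{(\infty)}}^2$ attached to $x_{2\delta+\epsilon}$ appears through the cascading of the Chevalley commutator formula during this reorganization.

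Finally, exactly as in \cref{lem:elim-lhs}, all the relations in question are equalities of pro-set morphisms $R^{(\infty)}\times R^{(\infty)}\to\St^{(\infty)}(\rF_4\setminus\{\pm\alpha\},R)$, so by \cref{RingGeneration} it suffices to compare, monomial by monomial, the two sides after the decompositions have been substituted. I would pin the integer structure constants down not by tracking signs by hand but by performing the identical sequence of manipulations on $[x_\alpha(1),x_\delta(1)]$ inside the genuine Steinberg group $\St(\rF_4,\ZZ)$ and invoking the free parametrization of unipotent elements from \cref{rem:uni-rad}; this forces the coefficients to be precisely $N_{\delta,\epsilon}$ and $N_{\delta,\epsilon}^{2,1}$. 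The step I expect to be the main obstacle is the bookkeeping that keeps the entire computation inside $\St^{(\infty)}(\rF_4\setminus\{\pm\alpha\},R)$: one must choose the decompositions furnished by \cref{ThreeRoots} so that no intermediate root equals $\pm\alpha$ and so that $\widetilde x_\alpha$ emerges only at the designated output position, while simultaneously assembling the second-order term $x_{2\delta+\epsilon}$ with its correct structure constant. Deferring the sign computation to $\St(\rF_4,\ZZ)$ via \cref{rem:uni-rad} is what makes this last and most delicate step manageable.
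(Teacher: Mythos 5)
Your overall template is the right one --- reduction to $\Phi=\rF_4$, the two configurations, decomposing one commutator argument via \cref{ThreeRoots}, reorganizing with \eqref{eq:HW-corr} and \eqref{eq:comm-mult-rhs}, concluding via \cref{RingGeneration} --- but you have chosen the \emph{wrong} argument to decompose in both configurations, and this breaks the pivotal step. Recall that by \cref{def:xbg} and \cref{lem:new-root} the morphism $\widetilde x_\alpha$ is defined only through commutators $[x_\beta,x_\gamma]$ with $\beta+\gamma=\alpha$ and $\beta,\gamma$ \emph{of the same length as $\alpha$}. In configuration (A), where $\alpha=\delta+\epsilon$ is short, you propose to decompose the long root $\epsilon=\epsilon_1+\epsilon_2$. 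A decomposition of $\epsilon$ into two short roots is useless here, since then $N_{\epsilon_1,\epsilon_2}=\pm2$ and the image of the multiplication-plus-doubling map is too small for the \cref{RingGeneration} argument; so $\epsilon_1,\epsilon_2$ must be long. But then every pair of roots in $\Phi\cap(\ZZ_{\geq0}\delta+\ZZ_{\geq0}\epsilon_1+\ZZ_{\geq0}\epsilon_2)$ summing to $\alpha$ --- namely $\{\delta,\epsilon\}$, $\{\delta+\epsilon_1,\epsilon_2\}$, $\{\delta+\epsilon_2,\epsilon_1\}$ --- consists of one short and one long root, so your claim that ``one distinguished inner commutator is a product of two equal-length roots summing to $\alpha$'' is false. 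Worse, such a mixed pair is itself an unproven instance of the relation you are establishing: if $s$ is short, $l$ is long and $s+l=\alpha$ is a (short) root, then $\langle l,s\rangle=-2$, so $2s+l$ is automatically a root and $[x_s(u),x_l(v)]$ is exactly an \eqref{R4}-commutator with $\widetilde x_\alpha$ on its right-hand side. The Hall--Witt reorganization therefore runs into the very relation under proof, and the argument is circular. The same happens in configuration (B): with $\alpha=2\delta+\epsilon$ long and $\delta=\delta_1+\delta_2$ decomposed into short roots, the pairs producing $\alpha$ in the cascade are either long--short (e.g.\ $\{\epsilon+2\delta_1,\,\delta_2\}$ with $(\epsilon+2\delta_1)+2\delta_2=\alpha$, again an unproven \eqref{R4} instance) or short--short pairs such as $\{\epsilon+\delta,\,\delta\}$, which have structure constant $\pm2$ and in any case are not defining expressions for $\widetilde x_\alpha$ since their members are not of $\alpha$'s length.

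The repair is to decompose the argument whose length \emph{equals} that of $\alpha$, which is precisely what the paper does: in configuration (A) decompose the short root $\delta=\delta_1+\delta_2$ (the paper's $\beta=\beta_1+\beta_2$ inside a $\rC_3$ subsystem), so that the distinguished commutator is the short--short pair $[x_{\delta_1},x_{\delta_2+\epsilon}]$ with $\delta_1+(\delta_2+\epsilon)=\alpha$, a genuine defining value of $\widetilde x_\alpha$; and in configuration (B) decompose the long root $\epsilon=\epsilon_1+\epsilon_2$ (the paper's $\gamma=\gamma_1+\gamma_2$ inside a $\rB_3$ subsystem), so that the distinguished commutator is the long--long pair $[x_{\epsilon_1},x_{2\delta+\epsilon_2}]$. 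With this choice all other inner commutators involve only genuine generators of $\St^{(\infty)}(\rF_4\setminus\{\pm\alpha\},R)$ or $\widetilde x_\alpha$ on the left, which \cref{cor:elim-lhs} handles, and the computation closes. Two minor remarks: your reduction to $\rF_4$ and the observation that $\delta,\epsilon\neq\pm\alpha$ is automatic are correct; and while deferring the structure constants to $\St(\rF_4,\ZZ)$ via \cref{rem:uni-rad} is legitimate in principle, the paper instead pins them down by substituting explicit triples into the cocycle identity \eqref{eq:cocycle2}, which is somewhat more direct once the cascade has been set up correctly.
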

\begin{proof}
We only need to consider the case $\Phi = \rF_4$.
We need to verify the relation obtained from \begin{equation}\label{eq:elim-rhs-r4-1} [x_{\beta}(b^{(\infty)}),\ x_{\gamma}(c^{(\infty)})] = x_{\beta+\gamma}(N_{\beta,\gamma}b^{(\infty)} c^{(\infty)}) x_{2\beta+\gamma}(N_{\beta,\gamma}^{2,1}{b^{(\infty)}}^2c^{(\infty)})\end{equation} by replacing either $x_{\beta+\gamma}$ or $x_{2\beta+\gamma}$ with $\widetilde{x}_{\alpha}$.

First, let us consider the case \(\alpha = \beta + \gamma\), in which \(\alpha\), \(\beta\) are short and \(\gamma\) is long. Consider a decomposition \(\beta = \beta_1 + \beta_2\) for some short roots \(\beta_i\). The smallest root subsystem containing \(\beta_i\) and \(\gamma\) is of type \(\rC_3\), so we may assume that \(\alpha = \mathrm e_1 + \mathrm e_3\), \(\beta = \mathrm e_1 - \mathrm e_3\), \(\beta_1 = \mathrm e_1 - \mathrm e_2\), \(\beta_2 = \mathrm e_2 - \mathrm e_3\), \(\gamma = 2\mathrm e_3\). Set $\varepsilon_1 = N_{\beta_1, \beta_2},$ $\varepsilon_2 = N_{\beta_2,\gamma}^{2,1},$ $\varepsilon_3 = N_{\beta_2, \gamma},$ $\varepsilon_4 = N_{\beta_1, 2\beta_2+\gamma}^{2,1},$ $\varepsilon_5 = N_{\beta_1, 2\beta_2+\gamma},$ $\varepsilon_6 = N_{\beta_1,\beta_2+\gamma},$ $\varepsilon_7 = N_{\beta_2+\gamma, \beta}.$
Substituting triples $(\beta_1, \beta_2+\gamma, \beta_2)$, $(\beta_1, \beta_2, \gamma)$, $(\beta_1, \beta_1+\beta_2, \beta_2+\gamma)$ into~\eqref{eq:cocycle2} we obtain the equalities 
\begin{equation*} \varepsilon_2 \varepsilon_3 \varepsilon_5 = - \varepsilon_1 \varepsilon_7,\ \varepsilon_3 \varepsilon_6  = N_{\beta, \gamma}\varepsilon_1,\ 2 \varepsilon_4 \varepsilon_5 \varepsilon_7 = - \varepsilon_6 N_{\beta,\alpha},\end{equation*}
which together with~\eqref{eq:N21def} imply the equality 
\begin{equation*}  \varepsilon_2 \varepsilon_4 = \tfrac{1}{2} \varepsilon_1 \varepsilon_3 \varepsilon_5 \varepsilon_7 \cdot \varepsilon_5 \varepsilon_6 \varepsilon_7 N_{\beta,\alpha}= \tfrac{1}{2} \varepsilon_1 \varepsilon_3 \varepsilon_6 N_{\beta, \alpha} = \tfrac{1}{2} N_{\beta,\gamma} N_{\beta,\alpha}= N_{\beta, \gamma}^{2, 1}.\end{equation*}
Now a direct computation using~\eqref{R2}--\eqref{R4} and \cref{cor:elim-lhs} shows that
\begin{align*}
  \bigl[x_\gamma(c^{(\infty)}), x_\beta(\varepsilon_1 b_1^{(\infty)} b_2^{(\infty)})\bigr] \\ = \bigl[x_{\beta_1}(b_1^{(\infty)}), \up{x_\gamma(c^{(\infty)})} {x_{\beta_2}(b_2^{(\infty)})}\bigr]\cdot x_\beta(-\varepsilon_1 b_1^{(\infty)} b_2^{(\infty)}) & \text{ by~\eqref{eq:comm-new}} \\
  = \bigl[x_{\beta_1}(b_1^{(\infty)}),\ x_{2\beta_2+\gamma}(-\varepsilon_2{b_2^{(\infty)}}^2c^{(\infty)}) x_{\beta_2+\gamma}(-\varepsilon_3b_2^{(\infty)}c^{(\infty)}) x_{\beta_2}(b_2^{(\infty)})\bigr] \cdot \\ \cdot x_{\beta}(-\varepsilon_1b_1^{(\infty)}b_2^{(\infty)})
  = \bigl[x_{\beta_1}(b_1^{(\infty)}), x_{2\beta_2+\gamma}(-\varepsilon_2{b_2^{(\infty)}}^2c^{(\infty)})\bigr] \cdot \qquad\qquad \\ \cdot \up{x_{2\beta_2+\gamma}(-\varepsilon_2{b_2^{(\infty)}}^2c^{(\infty)})}{\bigl[x_{\beta_1}(b_1^{(\infty)}), x_{\beta_2+\gamma}(-\varepsilon_3b_2^{(\infty)}c^{(\infty)})\bigr]} \cdot\qquad\\
 \cdot \bigl[x_{2\beta_2+\gamma}(-\varepsilon_2{b_2^{(\infty)}}^2c^{(\infty)}) x_{\beta_2+\gamma}(-\varepsilon_3b_2^{(\infty)}c^{(\infty)}), x_{\beta}(\varepsilon_1b_1^{(\infty)}b_2^{(\infty)})\bigr] & \text{ by~\eqref{eq:comm-mult-rhs}} \\
 = x_{\beta_1 + 2\beta_2 + \gamma}(-\varepsilon_2 \varepsilon_5 b_1^{(\infty)} {b_2^{(\infty)}}^2 c^{(\infty)})\cdot x_{2\beta + \gamma}(-\varepsilon_2 \varepsilon_4 {b_1^{(\infty)}}^2 {b_2^{(\infty)}}^2 c^{(\infty)}) \cdot \qquad \\
 \cdot \widetilde x_\alpha(-\varepsilon_3 \varepsilon_6 b_1^{(\infty)} b_2^{(\infty)}c^{(\infty)}) \cdot
 x_{\beta_1 + 2\beta_2 + \gamma}(-\varepsilon_1 \varepsilon_3 \varepsilon_7 b_1^{(\infty)} {b_2^{(\infty)}}^2 c^{(\infty)}) &\text{ by~\eqref{eq:comm-mult-lhs},~\eqref{eq:new-root}} \\
 = x_{2\beta + \gamma}( - N_{\beta,\gamma}^{2,1} {b_1^{(\infty)}}^2{b_2^{(\infty)}}^2c^{(\infty)}) \cdot \widetilde{x}_{\alpha}(-N_{\beta,\gamma}\varepsilon_1 b_1^{(\infty)}b_2^{(\infty)}c^{(\infty)}).
\end{align*}
 Thus, by~\cref{RingGeneration} \[[x_\gamma(c^{(\infty)}), x_\beta(b^{(\infty)})] =
 x_{2\beta+\gamma}(-N_{\beta,\gamma}^{2,1} {b^{(\infty)}}^2 {c^{(\infty)}}) \cdot \widetilde{x}_\alpha(-N_{\beta, \gamma} b^{(\infty)} c^{(\infty)}),\] which is an equivalent form of~\eqref{eq:elim-rhs-r4-1}.
 
 Now consider the case \(\alpha = 2\beta + \gamma\), in which \(\beta\) is short and \(\alpha\), \(\gamma\) are long. We can choose a decomposition \(\gamma = \gamma_1 + \gamma_2\) for some long roots \(\gamma_i\). The smallest root subsystem containing \(\beta\) and \(\gamma_i\) is of type \(\rB_3\), so we may assume that \(\alpha = \mathrm e_1 + \mathrm e_3\), \(\beta = \mathrm e_3\), \(\gamma = \mathrm e_1 - \mathrm e_3\), \(\gamma_1 = \mathrm e_1 - \mathrm e_2\), \(\gamma_2 = \mathrm e_2 - \mathrm e_3\). Set $\varepsilon_1 = N_{\gamma_1, \gamma_2}$, $\varepsilon_2 = N_{\beta,\gamma_2}^{2,1}$, $\varepsilon_3 = N_{\beta,\gamma_2}$, $\varepsilon_4 = N_{\gamma_1, 2\beta+\gamma_2}$, $\varepsilon_5 = N_{\gamma_1, \beta+\gamma_2}$.
 Substituting the triples $(\beta, \gamma_2, \gamma_1)$, $(\beta, \beta+\gamma_2, \gamma_1)$ into~\eqref{eq:cocycle2} we obtain the identities 
 \begin{equation*}
  \varepsilon_3 \varepsilon_5 = \varepsilon_1 N_{\beta,\gamma},\ 2\varepsilon_2\varepsilon_3 \varepsilon_4 = \varepsilon_5N_{\beta,\beta+\gamma}.
 \end{equation*}
 Notice that these identities imply that $\varepsilon_2\varepsilon_4 = \tfrac{1}{2} \varepsilon_3 \varepsilon_5 N_{\beta,\beta+\gamma} = \varepsilon_1 N_{\beta,\gamma}^{2,1}$.
 A direct computation using \eqref{R2}--\eqref{R4}, \eqref{eq:comm-mult-rhs},\eqref{eq:new-root} and \cref{cor:elim-lhs} shows that
 \begin{align*}
  \bigl[x_\beta(b^{(\infty)}), x_\gamma(\varepsilon_1 c_1^{(\infty)} c_2^{(\infty)})\bigr]
   =  \bigl[x_{\gamma_1}(c_1^{(\infty)}), \up{x_\beta(b^{(\infty)})} {x_{\gamma_2}(c_2^{(\infty)})}\bigr]\,
  x_\gamma(-\varepsilon_1 c_1^{(\infty)} c_2^{(\infty)})  \\  
  = \bigl[x_{\gamma_1}(c_1^{(\infty)}), x_{2\beta+\gamma_2}(\varepsilon_2 {b^{(\infty)}}^2 c_2^{(\infty)}) x_{\beta+\gamma_2}(\varepsilon_3 b^{(\infty)} c_2^{(\infty)}) x_{\gamma_2}(c_2^{(\infty)})\bigr] x_\gamma(-\varepsilon_1 c_1^{(\infty)} c_2^{(\infty)}) \\
  = \widetilde{x}_\alpha( \varepsilon_2 \varepsilon_4 {b^{(\infty)}}^2 c_1^{(\infty)}c_2^{(\infty)}) \cdot x_{\beta+\gamma}(\varepsilon_3 \varepsilon_5 b^{(\infty)} c_1^{(\infty)}c_2^{(\infty)}). \end{align*}
Now \cref{RingGeneration} and the above identity imply~\eqref{eq:elim-rhs-r4-1}. \end{proof}

\begin{lemma} \label{lem:elim-rhs-r3}
 The root subgroup morphism $\widetilde{x}_\alpha$ satisfies the pro-analogues of Steinberg relations~of type \eqref{R3} in which it occurs on the right-hand side. 
\end{lemma}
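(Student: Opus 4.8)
The plan is to follow the template of \cref{lem:elim-rhs-r4}. First I would dispose of the trivial cases. If $\Phi$ is simply-laced, then any type-\eqref{R3} relation having $\widetilde x_\alpha$ on the right-hand side arises from a decomposition $\alpha = \delta + \varepsilon$ with $\delta,\varepsilon$ of the same length as $\alpha$; here $N_{\delta,\varepsilon} = \pm 1$, so the identity $[x_\delta(d^{(\infty)}), x_\varepsilon(e^{(\infty)})] = \widetilde x_\alpha(N_{\delta,\varepsilon}\, d^{(\infty)} e^{(\infty)})$ is immediate from the definition of $\widetilde x_\alpha$ in \cref{lem:new-root} (using $N_{\delta,\varepsilon}^2 = 1$). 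Thus only $\Phi = \rF_4$ remains. Inspecting the possible lengths of $\delta,\varepsilon$ with $\delta + \varepsilon = \alpha$, the sole configuration not already covered is the one in which $\alpha$ is long while $\delta,\varepsilon$ are short; the remaining length combinations either produce a chain (type \eqref{R4}, handled in \cref{lem:elim-rhs-r4}) or keep $\delta,\varepsilon$ of the same length as $\alpha$. In the surviving case the two short roots are forced to be orthogonal, whence $N_{\delta,\varepsilon} = \pm 2$; and since $W(\rF_4)$ acts transitively on pairs of orthogonal short roots, I may assume the concrete realization $\alpha = \mathrm e_1 + \mathrm e_2$, $\delta = \mathrm e_1$, $\varepsilon = \mathrm e_2$.

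The genuine difficulty is that $\widetilde x_\alpha$ is defined through a decomposition of the long root $\alpha$ into two \emph{long} roots, whereas the relation to be verified involves only the short roots $\mathrm e_1,\mathrm e_2$; a naive expansion into short roots merely reproduces short–short commutators and is therefore circular. To break this I would manufacture a long–long subcommutator summing to $\alpha$ using the auxiliary short root $\mathrm e_3$. Concretely, the chain relation \eqref{R4} applied to the pair $(\mathrm e_3, \mathrm e_2 - \mathrm e_3)$ lets me rewrite $x_{\mathrm e_2}$ in terms of $[x_{\mathrm e_3}(a^{(\infty)}), x_{\mathrm e_2 - \mathrm e_3}(b^{(\infty)})]$ (the spurious $x_{\mathrm e_2 + \mathrm e_3}$ factor will drop out). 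Substituting this into the commutator of $x_{\mathrm e_1}(d^{(\infty)})$ with $x_{\mathrm e_2}$ and using that $\mathrm e_1 + (\mathrm e_2 - \mathrm e_3) \notin \Phi$ while $\mathrm e_1 + \mathrm e_3 \in \Phi$ is long, an application of \eqref{eq:comm-mult-rhs} together with the Hall–Witt identity \eqref{eq:HW-corr}, whose hypothesis $[x_{\mathrm e_1}(d^{(\infty)}), x_{\mathrm e_2 - \mathrm e_3}(b^{(\infty)})] = 1$ holds, produces the long root $\mathrm e_1 + \mathrm e_3$ and, after commuting it against $x_{\mathrm e_2 - \mathrm e_3}$, the subcommutator $[x_{\mathrm e_1 + \mathrm e_3}(N_{\mathrm e_1,\mathrm e_3}\, d^{(\infty)} a^{(\infty)}), x_{\mathrm e_2 - \mathrm e_3}(b^{(\infty)})]$. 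Since $(\mathrm e_1 + \mathrm e_3) + (\mathrm e_2 - \mathrm e_3) = \alpha$ and both summands are long, this subcommutator is exactly $\widetilde x_\alpha$ of something by \cref{lem:new-root}.

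The rest of the computation is to check that every other root appearing in the expansion is harmless: the combinations $\mathrm e_1 + \mathrm e_2 + \mathrm e_3$ and $\mathrm e_1 + \mathrm e_2 + 2\mathrm e_3$ are not roots of $\rF_4$, so the corresponding commutators vanish after splitting with \eqref{eq:comm-mult-rhs} (and \cref{cor:elim-lhs} is available should any $\widetilde x_\alpha$ term need to be moved past a root subgroup). I would then read off the integer coefficient of $\widetilde x_\alpha$, which is a product of structure constants, and match it with $N_{\delta,\varepsilon}$ by substituting the relevant triples of roots into the cocycle identity \eqref{eq:cocycle2}; finally \cref{RingGeneration} strips off the auxiliary product introduced by the chain decomposition and yields the desired identity in the original variables $d^{(\infty)}, e^{(\infty)}$. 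I expect the main obstacle to be precisely this last bookkeeping: verifying that, after all cancellations, a single long–long subcommutator survives carrying the correct structure constant $N_{\delta,\varepsilon}$, which requires the signs of the several structure constants involved to conspire through \eqref{eq:cocycle2}.
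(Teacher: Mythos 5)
Your proposal is correct, but it follows a genuinely different route from the paper's in the one nontrivial case ($\Phi=\rF_4$, $\alpha$ long, $\beta,\gamma$ short and orthogonal). The paper stays entirely inside the rank-$2$ subsystem spanned by $\alpha,\beta,\gamma$ (type $\rB_2$) and leans on the already-established \cref{lem:elim-rhs-r4}: it expands $[x_\beta(b^{(\infty)}), x_{\gamma-\beta}(c^{(\infty)})] = x_\gamma(\varepsilon_2 b^{(\infty)}c^{(\infty)})\,\widetilde x_\alpha(\varepsilon_1 {b^{(\infty)}}^2 c^{(\infty)})$, conjugates this identity by $x_\beta(d^{(\infty)})$, and uses additivity \eqref{R1} in the $\beta$-slot so that the quadratic term ${b^{(\infty)}}^2$ polarizes into the cross term $\widetilde x_\alpha(2\varepsilon_1 b^{(\infty)}d^{(\infty)}\cdots)$; the crucial factor $2$ in $N_{\beta,\gamma}=\pm2$ is produced by this polarization via the single identity $2\varepsilon_1=\varepsilon_2 N_{\beta,\gamma}$. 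You instead move up to a rank-$3$ configuration with the auxiliary short root $\mathrm e_3$, rewrite $x_{\mathrm e_2}$ through the chain relation for $(\mathrm e_3,\mathrm e_2-\mathrm e_3)$ (a genuine defining relation of $\St(\Phi\setminus\{\pm\alpha\},\cdot)$, since none of $\mathrm e_3,\mathrm e_2\pm\mathrm e_3$ equals $\pm\alpha$), and apply Hall--Witt \eqref{eq:HW-corr} — whose hypothesis $[x_{\mathrm e_1},x_{\mathrm e_2-\mathrm e_3}]=1$ does hold, as $\mathrm e_1+\mathrm e_2-\mathrm e_3\notin\rF_4$ — to land on the long--long commutator $[x_{\mathrm e_1+\mathrm e_3}(\cdot),x_{\mathrm e_2-\mathrm e_3}(\cdot)]$, which is $\widetilde x_\alpha$ of its argument directly by \cref{def:xbg} and \cref{lem:new-root}; here the factor $2$ enters instead through $N_{\mathrm e_1,\mathrm e_3}=\pm2$, and the constants match by \eqref{eq:cocycle2} applied to $(\mathrm e_1,\mathrm e_3,\mathrm e_2-\mathrm e_3)$, where the middle term vanishes because $\mathrm e_1+\mathrm e_2-\mathrm e_3$ is not a root. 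Your computation closes: the spurious factors $x_{\mathrm e_2+\mathrm e_3}$ and the conjugated commutators die by \eqref{R2} since $\mathrm e_1+\mathrm e_2+\mathrm e_3$ and $\mathrm e_1+\mathrm e_2+2\mathrm e_3$ are not roots, and \cref{RingGeneration} strips the product $a^{(\infty)}b^{(\infty)}$ exactly as the paper does elsewhere. What each approach buys: the paper's argument needs only rank-$2$ geometry but is logically downstream of \cref{lem:elim-rhs-r4}, whereas yours uses the ambient rank $\geq 3$ (harmlessly, given the standing hypotheses) and is essentially independent of \cref{lem:elim-rhs-r4}, being structurally parallel to the Hall--Witt manipulations already used in \cref{lem:elim-lhs} and \cref{lem:new-root}; it is arguably the more uniform proof, at the cost of slightly heavier bookkeeping of vanishing root sums.
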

\begin{proof}
We need to verify the relation 
\begin{equation}\label{eq:elim-rhs-r3-1} [x_{\beta}(b^{(\infty)}),\ x_{\gamma}(c^{(\infty)})] = \widetilde{x}_{\alpha}(N_{\beta,\gamma} b^{(\infty)}c^{(\infty)}).\end{equation}
In the case where $\alpha$, $\beta$ and $\gamma$ have the same length the assertion follows from \cref{lem:new-root}. Thus, we only need to consider the case where \(\alpha = \beta + \gamma\) and \(\alpha\) is long while \(\beta\) and \(\gamma\) are short. The smallest root subsystem containing \(\alpha\), \(\beta\), \(\gamma\) is of type \(\rB_2\). Without loss of generality we may assume \(\alpha = \mathrm e_1 + \mathrm e_2\), \(\beta = \mathrm e_1\), and \(\gamma = \mathrm e_2\).
Set $\varepsilon_1 = N_{\beta,\gamma-\beta}^{2,1}$, $\varepsilon_2 = N_{\beta,\gamma-\beta}$. Direct computation shows that \begin{align*}
  \up{x_\beta(d^\infty)} x_\gamma(\varepsilon_2b^{(\infty)}c^{(\infty)}) \cdot \widetilde{x}_\alpha(\varepsilon_1 {b^{(\infty)}}^2{c^{(\infty)}}) \\
  = \up{x_\beta(d^{(\infty)})}
   {\bigl(x_\gamma(\varepsilon_2 b^{(\infty)} c^{(\infty)})\,
   \widetilde x_\alpha(\varepsilon_1
   {b^{(\infty)}}^2 c^{(\infty)})\bigr)} & \text{ by \cref{cor:elim-lhs}}\\
  = \bigl[x_\beta(b^{(\infty)}),
  \up{x_\beta(d^{(\infty)})}
   {x_{\gamma - \beta}(c^{(\infty)})}\bigr] & \text{ by~\eqref{R2}, \cref{lem:elim-rhs-r4}}\\
  = \up{x_\beta(b^{(\infty)} + d^{(\infty)})}
   {x_{\gamma - \beta}(c^{(\infty)})}\cdot
  \up{x_\beta(d^{(\infty)})}
   {x_{\gamma - \beta}(-c^{(\infty)})} & \text{ by~\eqref{R1}} \\  
  = \widetilde x_\alpha(\varepsilon_1 (b^{(\infty)} + d^{(\infty)})^2 c^{(\infty)})\, x_\gamma(\varepsilon_2 (b^{(\infty)} + d^{(\infty)}) c^{(\infty)})\, x_{\gamma - \beta}(c^{(\infty)})\\
  \cdot \widetilde x_\alpha(-\varepsilon_1 c^{(\infty)} {d^{(\infty)}}^2)\, x_\gamma(-\varepsilon_2 c^{(\infty)} d^{(\infty)})\, x_{\gamma - \beta}(-c^{(\infty)})& \text{ by~\eqref{R4}}\\
  = x_{\gamma}(\varepsilon_2 b^{(\infty)}c^{(\infty)}) \widetilde{x}_{\alpha}(2\varepsilon_1{b^{(\infty)}} c^{(\infty)} {d^{(\infty)}} + \varepsilon_1 {b^{(\infty)}}^2{c^{(\infty)}})& \text{ by~\cref{lem:elim-rhs-r4}}
 \end{align*}
The relation~\eqref{eq:elim-rhs-r3-1} now follows from \cref{RingGeneration} and \eqref{eq:N21def}.
\end{proof}

\begin{proof}[Proof of~\cref{SingleRootElimination}]
To distinguish between the root subgroup morphisms of $\St^{(\infty)}(\Phi, R)$ and $\St^{(\infty)}(\Phi\setminus\{\alpha\}, R)$ we rename the root subgroup morphisms $x_\beta$, $\beta\neq \alpha$ of $\St(\Phi\setminus\{\alpha\}, R^{(\infty)})$ to $\widetilde{x}_{\beta}$. For the morphisms $x_\beta$ of $\St^{(\infty)}(\Phi, R)$ we continue to use the usual notation. 

By~\cref{SteinbergPresentation} the morphisms $\widetilde{x}_{\beta}$ give rise to a morphism $G_\alpha \colon \St^{(\infty)}(\Phi, R) \to \St^{(\infty)}(\Phi\setminus\{\alpha\}, R)$. By definition, $G_\alpha x_\beta = \widetilde{x}_\beta$ for all $\beta$. On the other hand, it is clear that $F_\alpha \widetilde{x}_\beta = x_\beta$ for $\beta\neq \alpha$. To see that $G_\alpha$ is inverse to $F_\alpha$ it remains to invoke the first part of~\cref{SteinbergPresentation} and~\cref{rem:xpma-presentation}.
\end{proof}


\section{The action of \texorpdfstring{$\GG_{\mathrm{sc}}(\Phi, S^{-1}R)$}{G(Ф, R)} on pro-groups} \label{sec:local-action}
Throughout this section \(R\) is a ring and \(S \subseteq R\) is a multiplicative subset.
\subsection{The case of Chevalley groups} \label{sec:local-Chevalley}
Our first goal is to show that the group \(\GG_{\mathrm{sc}}(\Phi, S^{-1} R)\) can be made to act on the Chevalley pro-group \(\GG^{(\infty)}(\Phi, R)\) by conjugation. Denote by \(H_\Phi\) the Hopf \(\ZZ\)-algebra of \(\GG_{\mathrm{sc}}(\Phi, -)\) with the structure of a Hopf algebra given by the triple $(\Delta, \varsigma, \varepsilon)$. 
For shortness, we denote by $K$ the augmentation ideal $\Ker(\varepsilon)$. Recall that \(H_\Phi= K \rtimes \ZZ\) is a finitely presented commutative \(\ZZ\)-algebra, therefore $K$ is a finitely generated rng. Indeed, if $x_i$ are the generators of $H_\Phi$, then $y_i:=x_i - \varepsilon(x_i)$ generate $K$ as an rng as seen e.\,g. from the identity
$$xy - \varepsilon(xy) = (x - \varepsilon(x)) (y - \varepsilon(y)) + (x - \varepsilon(x)) \varepsilon(y) + \varepsilon(x) (y - \varepsilon(y)).$$
There is a surjective homomorphism $\ZZ[X_1, \ldots, X_n] \to K \rtimes \ZZ$ sending $X_i$ to $x_i$,
whose kernel is generated by a finite collection of polynomials \(g_1, \ldots, g_m\).
Substituting $X_i \mapsto Y_i + \varepsilon(x_i)$ into $g_j$'s we obtain a new collection of polynomials $\{f_j(Y_1,\ldots Y_n)\}$ without constant terms.
It is clear that $f_j$'s form a complete system of defining relations between $y_i$. Denote by \(N\) be the maximum of the degrees of \(f_j\)'s.
Consider the ``coconjugation'' homomorphism (written down in Sweedler notation)
\[\mathrm{Coconj} \colon K \to H_\Phi \otimes K,\ h \mapsto \sum h_{(1)} \varsigma(h_{(3)}) \otimes h_{(2)}.\]

Recall that \(S^{-1} R\) is isomorphic as an $R$-module to the filtered colimit of \(R\)-modules \(\frac Rs\) over the category $\mathcal{S}$ (cf.~\cref{sec:rng-homotopes}), each of these modules being isomorphic to $R$. For $s, s' \in S$ the homomorphism \(\mathrm{can}\colon \frac R{s} \to \frac R{ss'}\) is given by $\tfrac{r}{s} \mapsto \tfrac{rs'}{ss'}$. The elements of modules $\frac Rs$ can be ``multiplied'' with each other via the natural $R$-module homomorphisms $\frac R s \otimes \frac R {s'} \to \frac R {ss'}$ compatible with the canonical homomorphisms $\frac R s \to S^{-1}R$.

Let $g$ be an element of $\GG_{\mathrm{sc}}(\Phi, S^{-1} R) \cong \mathbf{Ring}(H_\Phi, S^{-1}R)$. Consider the rng homomorphism \(g \otimes \mathrm{id} \colon H_\Phi \otimes K \to S^{-1} R \otimes K\).
Since tensor products commute with colimits, there exists \(s_0 \in S\) such that \((g \otimes \mathrm{id})\circ \mathrm{Coconj}(y_i)\) are the images of certain elements $b_i$ of $\frac R{s_0} \otimes K$ under the canonical $R$-module homomorphism $ \tfrac R{s_0} \otimes K \to S^{-1}R \otimes K.$

Notice that the elements of $\tfrac R {s_0} \otimes K$ also can be multiplied with each other via the obvious multiplication homomorphisms
\[ \tfrac R {s} \otimes K \otimes \tfrac R {s'} \otimes K \to \tfrac R {ss'} \otimes K,\text{ where } s,s'\in S. \]
Using these homomorphisms we can compute the values $f_j(b_1,\ldots, b_n)$ of the polynomials $f_j$'s on $b_i$'s.
It is clear that these values can be interpreted as elements of $\frac R {s_0^N}\otimes K$ and that they are mapped to $0$ under the homomorphism $\frac R {s_0^N} \otimes K \to S^{-1}R \otimes K$. Consequently, there is an element $s_0' \in S$ annihilating all of them simultaneously. Set $s_1 = s_0^N s_0'$. It is clear that in $\frac R{s_1} \otimes K$ we have the identities \(f_j(b_1, \ldots, b_n) = 0\) for all \(j\)'s. Although $b_i$'s are not determined uniquely, their choice is ``almost'' unique in the sense that for any other collection of elements $\{b_i'\}$ fitting in the left square of the diagram below there is $s_2\in S$ making the the whole diagram commute:
\begin{equation} \label{eq:bis} \begin{aligned} \xymatrix{ \{y_i\} \ar@<3.5pt>[r]^(0.4){\{b_i\}} \ar@<-3.5pt>[r]_(0.4){\{b_i'\}} \ar@{^{(}->}[d] & \tfrac R {s_1} \otimes K \ar[rd] \ar[rr]^{\mathrm{can}\otimes \mathrm{id}} & & \tfrac R {s_1 s_2} \otimes K \ar[ld] \\ K \ar[rr]^(.4){(g\otimes \mathrm{id})\circ \mathrm{Coconj}} & & S^{-1} R \otimes K. }\end{aligned} \end{equation}

Now we are ready to construct a premorphism $\theta$ representing the ``conjugation by $g$'' automorphism of pro-sets
\begin{equation} \label{eq:conj-g} \up g{(-)} \colon \GG^{(\infty)}(\Phi, R) \to \GG^{(\infty)}(\Phi, R).\end{equation}
By definition, this amounts to constructing a collection of maps
\[\theta_{s}\colon \Rng(K, R^{(\theta^*(s))}) \to \Rng(K, R^{(s)})\]
for some function $\theta^* \colon S \to S$. Set $\theta^*(s) = ss_1$.

Observe that for every \(s, s' \in S\) the mapping $ \frac rs \otimes {r'}^{(ss')} \mapsto (rr')^{(s')}$ defines an isomorphism \(\frac Rs \otimes_R R^{(ss')} \cong R^{(s')}\) of $R$-modules. In particular, the \(R\)-module \(\frac Rs \otimes_R R^{(ss')}\) is equipped with a natural structure of an rng.

Let $h$ be an element of $\Rng(K, R^{(ss_1)})$.
Notice that the images of $b_i$ under $\mathrm{id}\otimes h$ satisfy all $f_j$'s.
Now the element $\theta_s(h)$ now can be constructed as the composite of $\mathrm{Coconj}$ and the unique rng homomorphism $K \to R^{(s)}$ making the following diagram commute:
\[ \xymatrix{ \{ y_i\} \ar@{^{(}->}[d] \ar[r]^(0.45){\{b_i\}} & \tfrac R {s_1} \otimes K \ar[d]^{\mathrm{id} \otimes h} \\
               K \ar@{-->}[r]_(0.33){\theta_s(h)} & \tfrac R {s_1} \otimes_R R^{(ss_1)} \ar@{=}[r] & R^{(s)}. } \]

It is clear that the morphism~\eqref{eq:conj-g} defined by $\theta$ is independent of the choices of $s_0$ and $s_1$. The diagram~\eqref{eq:bis} also shows that it is also independent of the choice of $b_i$'s.

Now let us check that \(\up g{(-)}\) is a morphism of pro-groups.
By a standard result on Hopf algebras and using the fact that $K$ is flat, we may choose the generators $y_i$ of \(K\) in such a way that $Y = \bigoplus_i \mathbb Z y_i \leq K$ is a finite free submodule and $\Delta(Y) \leq Y \otimeshat Y$. Consequently, there is a well-defined group homomorphism $Y \to \frac R{s_1} \otimes K$ given by $y_i \mapsto b_i$. From the identity $\up g{(xy)} = \up gx\, \up gy$ reformulated in the language of Hopf algebras we conclude that for sufficiently large $s_2 \in S$ the top pentagon in the following diagram commutes:
\begin{equation}\label{eq:house} \begin{aligned} \xymatrix@R=48pt@C=72pt@!0{
 & Y \ar[dr] \ar[dl]_(0.4){\Delta} & \\
Y \otimeshat Y \ar[d] &  & \frac R{s_1} \otimes K \ar[d]^{\mathrm{can} \otimes \Delta} \\
(\frac R{s_1} \otimes K)^{\otimeshat 2} \ar[rr]^{v} \ar[d]^{(\mathrm{id} \otimes h_1) \otimeshat (\mathrm{id} \otimes h_2)} && \frac R{s_1^2 s_2} \otimes K^{\otimeshat 2} \ar[d]^{\mathrm{id} \otimes (h_1 \otimeshat h_2)} \\
(R^{(ss_1s_2)})^{\otimeshat 2} \ar[d]^{m} && \frac R{s_1^2 s_2} \otimes (R^{(ss_1^2 s_2)})^{\otimeshat 2} \ar[d]^{\mathrm{id}\otimes m} \\
R^{(ss_1s_2)} \ar[r] & R^{(s)} & \frac R{s_1^2 s_2} \otimes R^{(ss_1^2 s_2).} \ar[l]
}\end{aligned}\end{equation}
The homomorphism $v$ in the above diagram is induced by canonical homomorphisms $\tfrac R {s_1} \to \tfrac R {s_1^2s_2}\text{ and }\tfrac{R} {s_1} \otimes \tfrac R{s_1} \to \tfrac R{s_1^2s_2}.$
It is clear that for any $h_1, h_2 \colon K \to R^{(ss_1^2 s_2)}$ the bottom square of the above diagram also commutes.
The commutativity of the whole diagram implies that the images of $\theta_{ss_1s_2}(h_1h_2)$ and $\theta_{ss_1s_2}(h_1)\, \theta_{ss_1s_2}(h_2)$ in $\GG_\mathrm{sc}(\Phi, R^{(s)})$ coincide. Consequently, the morphism~\eqref{eq:conj-g} is a morphism of group of objects in $\Pro(\Set)$ and hence a morphism of pro-groups (cf. the beginning of~\cref{sec:group-objects}).

It is not hard to check that $\up 1{(-)}$ is the identity automorphism of $\mathrm{id}_{\GG^{(\infty)}(\Phi, R)}$ and that for all \(g, g' \in \GG_{\mathrm{sc}}(\Phi, S^{-1} R)\) one has $\up{gg'}{(-)} = \up{g}{(\up{g'}{(-)})}$.

\subsection{The case of Steinberg groups}
Clearly, the set $\Pro(\Group)(R^{(\infty)})$ of endomorphisms of the additive pro-group of the pro-rng $R^{(\infty)}$ can be endowed with the structure of an associative ring: the addition is 
pointwise, while the multiplication is the usual composition of endomorphisms.

For every class of fractions \([\frac rs] \in S^{-1} R\) there is a well-defined pro-group endomorphism \(m_{[\frac rs]} \colon R^{(\infty)} \to R^{(\infty)}\) given by \(m_{[\frac rs]}^*(s') = ss'\) and \(m_{[\frac rs]}^{(s')}(a^{(ss')}) = (ra)^{(s')}\). It is easy to see that $m_{[\frac{r}{s}]}$ does not depend on the choice of the representative \(\tfrac r s\). Moreover, it is easy to see that \(m\) is actually a homomorphism of rings \(S^{-1} R \to \Pro(\Group)(R^{(\infty)})\) and that 
\begin{equation} \label{eq:m-mult} m_{[\frac rs]}(a^{(\infty)} b^{(\infty)}) = m_{[\frac rs]}(a^{(\infty)}) b^{(\infty)} = a^{(\infty)} m_{[\frac rs]}(b^{(\infty)}).\end{equation}
For shortness we write \(\frac rs a^{(\infty)}\) instead of \(m_{[\frac rs]}(a^{(\infty)})\). 

Our next step is to construct the action of the Steinberg group $\St(\Phi, S^{-1}R)$ on the Steinberg pro-group $\St^{(\infty)}(\Phi, R)$.
\begin{df}\label{root-action}
 Let \(R\) be a ring, \(S \subseteq R\) be a multiplicative subset, and \(\Phi\) be a root system of rank \(\geq 3\) different from \(\rB_\ell\) and \(\rC_\ell\).
 
 For $u \in S^{-1}R$ and a root subgroup morphism $x_\beta$ of $\St^{(\infty)}(\Phi\setminus\{-\alpha\}, R)$ we define the morphism 
 $\up{x_\alpha(u)}x_\beta \colon R^{(\infty)} \to \St^{(\infty)}(\Phi\setminus\{-\alpha\}, R)$
 via one of the following identities:
 \begin{align} 
 \multispan2{$\up{x_\alpha(u)}{x_\beta(b^{(\infty)})} := x_\beta(b^{(\infty)})$ \hfill if $\alpha + \beta \notin \Phi \cup \{0\}$;} \label{eq:action-R2} \\
 \up{x_\alpha(u)}{x_\beta(b^{(\infty)})}
 &:= x_{\alpha + \beta}\bigl(N_{\alpha, \beta} u b^{(\infty)}\bigr)\, x_\beta(b^{(\infty)}) \label{eq:action-R3} \\
 \multispan2{\hfill if $\alpha + \beta \in \Phi$ but $\alpha + 2\beta, 2\alpha + \beta \notin \Phi$;} \nonumber \\
 \up{x_\alpha(u)}{x_\beta(b^{(\infty)})}
 &:= x_{\alpha + \beta}\bigl(N_{\alpha, \beta} u b^{(\infty)}\bigr)\,
  x_{2\alpha + \beta}\bigl(N^{2,1}_{\alpha, \beta} u^2 b^{(\infty)}\bigr)\,
  x_\beta(b^{(\infty)}) \label{eq:action-R4a} \\
 \multispan2{\hfill if $\alpha + \beta, 2\alpha + \beta \in \Phi$;} \nonumber \\
 \up{x_\alpha(u)}{x_\beta(b^{(\infty)})}
 &:= x_{\alpha + \beta}\bigl(N_{\alpha, \beta} u b^{(\infty)}\bigr)\,
  x_{\alpha + 2\beta}\bigl(N^{1,2}_{\alpha, \beta} u {b^{(\infty)}}^2\bigr)\,
  x_\beta(b^{(\infty)}) \label{eq:action-R4b} \\
 \multispan2{\hfill if $\alpha + \beta, \alpha + 2\beta \in \Phi$.} \nonumber
 \end{align} 
\end{df}

\begin{lemma} \label{endomor-elim}
 The morphisms $\up{x_\alpha(u)}x_\beta(b^{(\infty)})$ defined above are pro-group morphisms. They give rise to an endomorphism 
 \[\up{x_\alpha(u)}(-) \colon \St^{(\infty)}(\Phi\setminus\{-\alpha\}, R) \to \St^{(\infty)}(\Phi\setminus\{-\alpha\}, R).\]
\end{lemma}
\begin{proof}
 By~\cref{rem:xpma-presentation} we need to verify the identities
 \begin{align}
 \up{x_\alpha(u)} x_\beta(b^{(\infty)}) \cdot \up{x_\alpha(u)} x_\beta(c^{(\infty)}) & = \up{x_\alpha(u)} x_\beta(b^{(\infty)} + c^{(\infty)}), \label{eq:additivity} \\
 [\up{x_\alpha(u)}{x_\beta(b^{(\infty)})}, \up{x_\alpha(u)}{x_\gamma(c^{(\infty)})}] & = \prod_{\substack{i\beta + j\gamma \in \Phi\\ i, j > 0}} \up{x_\alpha(u)}{x_{i\beta + j\gamma}\bigl(N_{\beta, \gamma}^{i, j} {b^{(\infty)}}^i {c^{(\infty)}}^j \bigr)}, \label{eq:main-equation} \end{align}
 in which \(\beta, \gamma \in \Phi \setminus \{- \alpha\}\), $\beta \neq -\gamma$, $-\alpha\not\in(\ZZ_{>0}\beta + \ZZ_{>0}\gamma) \cap \Phi$ and, moreover,
  $\beta+\gamma\neq -2\alpha$. A direct consideration of root systems $\rA_2$ and $\rB_2$ shows that under these assumptions the root subset $\Sigma = \Phi\cap (\ZZ_{\geq 0}\beta + \ZZ_{\geq 0}\gamma + \ZZ_{\geq 0}\alpha)$ is special (in particular, it does not contain $-\alpha$).
  
 Set $T = \ZZ[b,c,u]$, $I := \langle b, c \rangle \trianglelefteq T$. Denote by $\mathcal{X}_{\Sigma, I}$ the set $\{ x_\delta(d) \mid d\in I,\ \delta\in\Sigma \}$ of defining generators of $\mathrm{U}(\Sigma, I)$ and by $\mathcal{R}_{\Sigma, I}$ the normal subgroup of the free group $F:=F(\mathcal{X}_{\Sigma, I})$ generated by the relations $R_\beta(b, c) = x_\beta(b)x_\beta(c)x_\beta(b + c)^{-1}$ of type~\eqref{R1} and relations $R_{\beta,\gamma}(b,c)$ of type \eqref{R2}--\eqref{R4} with $\beta,\gamma \in \Sigma$. By~\cref{rem:uni-rad} the group $\mathrm{U}(\Sigma, I)$ is isomorphic to $F(\mathcal{X}_{\Sigma, I})/\mathcal{R}_{\Sigma, I}$.
 
 Notice that there is a well-defined conjugation action of $\mathrm{U}(\Sigma, T)$ on $\mathrm{U}(\Sigma, I)$. The action of $x_\alpha(u)$ on each $x_\beta(b) \in \mathcal{X}_{\Sigma, I}$ is given precisely by formulas~\eqref{eq:action-R2}--\eqref{eq:action-R4b} (with the superscript $(\infty)$ removed). The fact that this endomorphism of $F$ defines an endomorphism of $\mathrm{U}(\Sigma, I)$ implies that for any Steinberg relation $R$ (which is either $R_\beta(b,c)$ or $R_{\beta,\gamma}(b,c)$) the word $\up{x_\alpha(u)}R$ can be rewritten in $F$ as a product $\prod_i R_i^{g_i}$, where $R_i$ is either $R_{\beta_i}(f_{i}^1(b,c,u), f_{i}^2(b,c,u))$ or $R_{\beta_i,\gamma_i}(f^{1}_i(b,c,u), f^{2}_i(b,c,u))$, $f^1_i, f^2_i \in I$, $\beta_i, \gamma_i \in \Sigma$ and $g_i \in F$.
 
 Notice that a polynomial $f(b,c,u)\in I$ and $u_0 = [\tfrac r s] \in S^{-1}R$ give rise to a morphism $R^{(\infty)}\times R^{(\infty)} \to R^{(\infty)}$ of pro-sets.
 Indeed, this morphism is the result of reading the expression $f(b^{(\infty)}, c^{(\infty)}, u_0)$ according to~\cref{conv:notation}. It can be presented as a certain composition of morphisms $\Delta$, $m_{[\tfrac r s]}$, $m$, $+$ and morphisms of component reordering, cf.~\cref{example-commutator}. It follows from the previous paragraph that by inserting and deleting trivial subexpressions of the form $x_{\delta}(f)x_{\delta}(f)^{-1}$, $f = f(b^{(\infty)}, c^{(\infty)}, u_0)$ we can rewrite either of the equations \eqref{eq:additivity}--\eqref{eq:main-equation} into a product of conjugates of words \(R_{\beta_i}(f_{i}^1(b^{(\infty)},c^{(\infty)},u_0),\- f_{i}^2(b^{(\infty)},c^{(\infty)},u_0))\) or $R_{\beta_i,\gamma_i}(f^{1}_i(b^{(\infty)}, c^{(\infty)},u_0),\- f^{2}_i(b^{(\infty)},c^{(\infty)},u_0))$.
 The assertion of the lemma now follows from the fact that the root subgroup morphisms of $\St^{(\infty)}(\Phi\setminus\{-\alpha\}, R)$ obviously satisfy all the relations $R_\beta$, $R_{\beta, \gamma}$ for $\beta,\gamma \in \Sigma$.
\end{proof}

\begin{prop}\label{SteinbergLocalAction}
 Let $R$ and $\Phi$ be as in the above definition.
 The identities~\eqref{eq:action-R2}--\eqref{eq:action-R4b} specify a unique well-defined conjugation action of the group \(\St(\Phi, S^{-1} R)\) on the pro-group \(\St^{(\infty)}(\Phi, R)\).
 The morphism \(\mathrm{st} \colon \St^{(\infty)}(\Phi, R) \to \GG^{(\infty)}(\Phi, R)\) is equivariant with respect to this action.
\end{prop}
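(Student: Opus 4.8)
The plan is to build the action one root subgroup at a time and then assemble the pieces into a homomorphism out of $\St(\Phi, S^{-1}R)$. Fix a root $\alpha$ and $u \in S^{-1}R$. The formulas of~\cref{root-action} prescribe the effect of ``conjugation by $x_\alpha(u)$'' on each generator $x_\beta$, $\beta \neq \pm\alpha$, of $\St^{(\infty)}(\Phi\setminus\{\pm\alpha\}, R)$; note that every root appearing on the right-hand sides ($\alpha+\beta$, $2\alpha+\beta$, $\alpha+2\beta$) is automatically different from $\pm\alpha$, so these are genuine morphisms into $\St^{(\infty)}(\Phi\setminus\{\pm\alpha\}, R)$, interpreted using the $S^{-1}R$-module structure on $R^{(\infty)}$ recorded in~\eqref{eq:m-mult}. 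First I would check, via the analogue of~\cref{SteinbergPresentation} stated in~\cref{rem:xpma-presentation}, that the morphisms $\up{x_\alpha(u)}{x_\beta}$ satisfy the pro-analogues of those Steinberg relations~\eqref{R2}--\eqref{R4} which do not involve $\pm\alpha$. This yields an endomorphism $c_{x_\alpha(u)}$ of $\St^{(\infty)}(\Phi\setminus\{\pm\alpha\}, R)$.

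Next I would establish $c_{x_\alpha(0)} = \mathrm{id}$ and $c_{x_\alpha(u)}\,c_{x_\alpha(u')} = c_{x_\alpha(u+u')}$, both verified on the generators $x_\beta$ using~\cref{RingGeneration}; the point is that iterating the formulas produces no further $\alpha$-contributions (e.g.\ $3\alpha+\beta$ is not a root), so the terms in each root subgroup simply add. In particular each $c_{x_\alpha(u)}$ is invertible, hence an automorphism. Transporting along the isomorphism $F_\alpha$ of~\cref{SingleRootElimination}, I set $\theta_\alpha(u) := F_\alpha\, c_{x_\alpha(u)}\, F_\alpha^{-1} \in \Aut(\St^{(\infty)}(\Phi, R))$; unwinding $F_\alpha \widetilde x_\beta = x_\beta$ shows that $\theta_\alpha(u)$ acts on each $x_\gamma$ with $\gamma\neq\pm\alpha$ by the very same formula of~\cref{root-action}. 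The identity $\up{x_\alpha(u)}{x_\alpha} = x_\alpha$ then follows by writing $x_\alpha = \widetilde x_\alpha$ through a same-length decomposition $\alpha = \beta+\gamma$ as in~\cref{lem:new-root}: since $|\alpha+\beta|^2 = 3|\alpha|^2$ is not a root length, one has $\alpha+\beta, \alpha+\gamma \notin \Phi$, so $\theta_\alpha(u)$ fixes both $x_\beta$ and $x_\gamma$, hence fixes their commutator.

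It remains to verify that $\delta, u \mapsto \theta_\delta(u)$ respects~\eqref{R1}--\eqref{R4}, which furnishes the desired homomorphism $\St(\Phi, S^{-1}R) \to \Aut(\St^{(\infty)}(\Phi, R))$, i.e.\ the action. Relation~\eqref{R1} is the additivity already obtained. For~\eqref{R2}--\eqref{R4}, given the two nonparallel roots $\alpha, \delta$ occurring in the relation, I would apply~\cref{DoubleRootElimination} with $\Phi_0 = \Phi\cap(\ZZ\alpha+\ZZ\delta)$: it suffices to compare the two competing automorphisms on the generators $x_\gamma$ with $\gamma \notin \Phi_0$. This is the crucial simplification, because for such $\gamma$ every root produced by successively applying the $\theta$'s again lies outside $\Phi_0$ and is therefore distinct from $\pm\alpha, \pm\delta$; consequently both sides are computed purely by the explicit formulas of~\cref{root-action}, and the whole verification collapses to a finite set of identities among structure constants, whose uniqueness is guaranteed by~\cref{rem:uni-rad}. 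These identities are exactly the ones governing conjugation in an honest Steinberg group and reduce to~\eqref{eq:sc-ids-sl} and the cocycle identity~\eqref{eq:cocycle2}. Equivariance of $\mathrm{st}$ is handled in the same spirit: both $\mathrm{st}\circ\theta_\delta(u)$ and $\mathrm{conj}_{t_\delta(u)}\circ\mathrm{st}$ (the latter using the Chevalley-group action of~\cref{sec:local-Chevalley}) are morphisms out of $\St^{(\infty)}(\Phi, R)$, so by~\cref{DoubleRootElimination} they may be compared only on $x_\gamma$, $\gamma\notin\Phi_0$, where $\theta_\delta(u)$ acts by formula and $\mathrm{st}$ turns it into the Chevalley commutator expansion of $\up{t_\delta(u)}{t_\gamma}$.

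I expect the main obstacle to be the structure-constant bookkeeping in~\eqref{R4}, i.e.\ the long/short interactions that only occur for $\Phi = \rF_4$, where the coefficients $N^{2,1}$ enter and the commutators no longer truncate after a single term. The saving grace throughout is that~\cref{DoubleRootElimination} lets us avoid the genuinely hard cases --- the behaviour of $\theta_\alpha(u)$ on $x_{-\alpha}$, which would produce an uncontrolled expansion --- so that every relation and the equivariance statement are tested only on root subgroups lying outside the rank-$2$ subsystem, where the index functions of the pro-morphisms are tracked automatically by the $\Pro(\Group)$ formalism.
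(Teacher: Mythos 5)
Your architecture coincides with the paper's proof step for step: define $\up{x_\alpha(u)}{x_\beta}$ on the generators of $\St^{(\infty)}(\Phi\setminus\{\pm\alpha\}, R)$, check the defining relations to obtain an endomorphism via \cref{rem:xpma-presentation}, transport it through the isomorphism $F_\alpha$ of \cref{SingleRootElimination}, verify the relations \eqref{R2}--\eqref{R4} among the resulting automorphisms by testing only on $x_\gamma$ with $\gamma \notin \Phi_0$ via \cref{DoubleRootElimination} (with the same key observation that iterated applications of \cref{root-action} never produce a root of $\Phi_0$, so the whole computation runs on explicit formulas), and derive $\up{x_\alpha(u)}{x_\alpha} = x_\alpha$ from a same-length decomposition $\alpha = \beta + \gamma$ with $\alpha+\beta, \alpha+\gamma \notin \Phi$ --- this last computation is essentially word-for-word the paper's. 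One difference of technique: for the first verification you leave the method open (presumably direct structure-constant bookkeeping via \cref{rem:uni-rad}, \eqref{eq:sc-ids-sl} and \eqref{eq:cocycle2}), whereas the paper short-circuits it with a universal-coefficients device: the expanded identity is written once in $\St(\Phi\setminus\{\pm\alpha\}, \ZZ[b,c,t])$, which is isomorphic to $\St(\Phi, \ZZ[b,c,t])$ by the unital case of \cref{SingleRootElimination}, so it holds automatically as a consequence of the defining relations; your route would also work but is heavier. A small caution on your additivity step: in the \eqref{R4} case the coefficients do not literally ``simply add'' --- a cross term appears in the $x_{2\alpha+\beta}$ slot and cancels only because $2N^{2,1}_{\alpha,\beta} = N_{\alpha,\beta} N_{\alpha,\alpha+\beta}$ --- but the paper asserts this step at the same level of detail, so no gap relative to it.

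The one place where your proposal asserts rather than proves is equivariance. The reduction via \cref{DoubleRootElimination} is legitimate (and in fact handles $\beta = \pm\delta$ more cleanly than an appeal to \cref{SteinbergPresentation} would), but the clause ``$\mathrm{st}$ turns it into the Chevalley commutator expansion of $\up{t_\delta(u)}{t_\gamma}$'' presupposes exactly what must be shown: the action of $\GG_{\mathrm{sc}}(\Phi, S^{-1}R)$ on $\GG^{(\infty)}(\Phi, R)$ was defined in \cref{sec:local-Chevalley} abstractly, through $\mathrm{Coconj}_g$ and the subring $A_{s_0}$ of $S^{-1}R \otimes_R R^{(ss_0)}$, not by unipotent formulas, so its compatibility with the right-hand sides of \cref{root-action} (with the fractional coefficients $u$, $u^2$ acting through the pro-group endomorphisms $m_{[\frac rs]}$ of \eqref{eq:m-mult}) is a genuine verification, not a tautology. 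The paper supplies it by a commutative diagram of Hopf-algebra homomorphisms involving $\mathrm{Coconj}$ and the maps $\mathrm{Cot}_\alpha$, $\mathrm{Cot}_\beta$, postcomposed with evaluations $X \mapsto u$, $Y \mapsto b^{(s)}$, carried out for each of the four cases of \cref{root-action}. The statement you need is true and its proof is routine once set up, but as written your equivariance step is incomplete; everything else in the proposal is sound and matches the paper's argument.
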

\begin{proof}
 By~\cref{SingleRootElimination} the pro-group $\St^{(\infty)}(\Phi\setminus\{-\alpha\}, R)$ is isomorphic to $\St^{(\infty)}(\Phi, R)$, therefore from \cref{endomor-elim} we obtain an endomorphism of $\St^{(\infty)}(\Phi,\- R)$, for which we use the same notation $\up{x_\alpha(u)}(-)$. It is easy to see that the mapping $u \mapsto \up{x_\alpha(u)}(-)$, in fact, specifies a group homomorphism $S^{-1}R \to \Aut(\St^{(\infty)}(\Phi, R))$. 
 
 The next step of the proof is to verify that the constructed action satisfies the Steinberg relations~\eqref{R2}--\eqref{R4} defining the group $\St(\Phi, S^{-1}R)$. Let $R_{\alpha, \beta}(a, b)$ be one of these relations (as in the proof of the above lemma we interpret it as an element of the free group on generators $x_\alpha(a)$). Denote by $\Phi_0$ the root subsystem of $\Phi$ generated by $\alpha$ and $\beta$ and let $\gamma$ be a root of $\Phi \setminus \Phi_0$. Since the root subset $\Sigma = (\ZZ_{\geq 0} \alpha + \ZZ_{\geq 0}\beta + \ZZ_{\geq 0}\gamma)\cap \Phi$ is special, we can use an argument similar to the proof of~\cref{endomor-elim} in order to shows that the result of conjugation of $x_\gamma$ with $R_{\alpha, \beta}(a, b)$ coincides with $x_\gamma$. By~\cref{DoubleRootElimination} this implies that the automorphism of conjugation with $R_{\alpha,\beta}(a, b)$ coincides with the trivial automorphism of $\St^{(\infty)}(\Phi, R)$.

 Now let us verify the equivariance. It is enough to show that
 \begin{equation}\label{eq:equivariance-statement} \mathrm{st}\bigl(\up{x_\alpha(u)}{x_\beta(b^{(\infty)})}\bigr) = \up{t_\alpha(u)}{\mathrm{st}\bigl(x_\beta(b^{(\infty)})\bigr)},\text{ for all }\alpha,\beta \in \Phi, \beta \neq -\alpha.\end{equation}
 We will verify the equality in the case when \(\alpha + \beta \in \Phi\) but \(\alpha + 2\beta, 2\alpha + \beta \notin \Phi\), the proof in the other cases being similar.
 Consider the elementary root unipotent \(t_\beta(-) \colon \mathbb{G}_a \to \GG_{\mathrm{sc}}(\Phi, -)\) as a morphism of algebraic groups.
 Denote by \(\mathrm{Cot}_\beta \colon K \to X\ZZ[X]\) the corresponding homomorphism of augmentation ideals of Hopf algebras (as in~\cref{sec:local-Chevalley} we abbreviate $\Ker(\varepsilon)$ to $K$).
 Fix \(u \in S^{-1} R\). Since root unipotents $t_\alpha$ satisfy~\eqref{R3} the outer square of the following diagram commutes:
 \[\scalebox{0.8}{\begin{minipage}{1.1\textwidth}\[\xymatrix@R=2pc {
 K \ar[rr]^(.42){(t_\alpha(u)\otimes\mathrm{id})\,\mathrm{Coconj}} \ar[ddd]^{\Delta}  & & 
 S^{-1}R \otimes K \ar[rr]^(0.48){\mathrm{id}\otimes \mathrm{Cot}_\beta} & &
 S^{-1}R \otimes X \mathbb Z[X]\\
 & Y \ar[d] \ar@{^{(}->}[ul] \ar[r]^(0.4){\{b_i\}} & \tfrac R {s_1} \otimes K \ar[u] \ar[r]^(0.42){\mathrm{can}\otimes\mathrm{Cot}_\beta} & \tfrac R {s_1^2s_2} \otimes X\ZZ[X] \ar[ru]\\ 
 & Y \otimeshat Y  \ar@{^{(}->}[dl] \ar@{-->}[rr] & & (\tfrac R {s_1} \otimes X\ZZ[X])^{\otimeshat 2} \ar[r] \ar[u]^{v} & (S^{-1}R \otimes X\ZZ[X])^{\otimeshat 2} \ar[uu]^{m} \\
 K \otimeshat K \ar[rrrr]^{\mathrm{Cot}_\beta \otimeshat \mathrm{Cot}_{\alpha + \beta}} & & & &
 X_1 \mathbb Z[X_1] \otimeshat X_2 \mathbb Z[X_2]. \ar[u]^{\begin{psmallmatrix*}[l] X_1 & \mapsto & 1 \otimes X\\ X_2 & \mapsto & N_{\alpha,\beta} u \otimes X \end{psmallmatrix*}}  &
 }\]\end{minipage}}\]
 Similarly, to the construction of~\eqref{eq:house} we can choose a free abelian subgroup $Y\leq K$ of finite rank generating $K$ multiplicatively so that $\Delta(Y)\leq Y\otimeshat Y$. 
 Further, following the procedure described in~\eqref{eq:bis} we can choose $s_1$ and $\{b_i\}$ in such a manner that the top left square of the above diagram commutes. Also, there is a dashed arrow making the bottom pentagon commute. Finally, there exists sufficiently large $s_2$ such that the central square of the diagram commutes. This commutativity implies~\eqref{eq:equivariance-statement} (the top path gives the right-hand side while the bottom one gives the left-hand side; the homomorphism $v$ is has the same meaning as in~\eqref{eq:house}). Thus, by \cref{SteinbergPresentation} the morphism \(\mathrm{st} \colon \St^{(\infty)}(\Phi, S^{-1} R) \to \GG^{(\infty)}(\Phi, R)\) is \(\St(\Phi, S^{-1} R)\)-equivariant.
\end{proof}

\begin{theorem}\label{ChevalleyLocalAction}
 Let \(R\) be a ring, \(M \trianglelefteq R\) be a maximal ideal and \(\Phi\) be a root system of rank \(\geq 3\) different from \(\rB_\ell\) and \(\rC_\ell\). Set \(S = R \setminus M\). Then the action of $\St(\Phi, R_M)$ defined in~\cref{SteinbergLocalAction} gives rise to an action of the group \(\GG_{\mathrm{sc}}(\Phi, R_M)\) on \(\St^{(\infty)}(\Phi, R)\). The morphism \(\mathrm{st} \colon \St^{(\infty)}(\Phi, R) \to \GG^{(\infty)}(\Phi, R)\) is \(\GG_{\mathrm{sc}}(\Phi, R_M)\)-equivariant.
\end{theorem}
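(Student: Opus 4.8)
The plan is to deduce the theorem from~\cref{SteinbergLocalAction} by showing that the action of $\St(\Phi, R_M)$ on $\St^{(\infty)}(\Phi, R)$ constructed there factors through the quotient $\GG_{\mathrm{sc}}(\Phi, R_M) = \St(\Phi, R_M)/\mathrm{K}_2(\Phi, R_M)$. Writing $\rho \colon \St(\Phi, R_M) \to \Aut(\St^{(\infty)}(\Phi, R))$ for this action, it suffices to prove that the kernel $\mathrm{K}_2(\Phi, R_M) = \Ker(\mathrm{st})$ is contained in $\Ker(\rho)$. Since $R_M = S^{-1}R$ is local, $\mathrm{K}_2(\Phi, R_M)$ is central in $\St(\Phi, R_M)$ and is generated by the Steinberg symbols $\{u, v\}$ with $u, v \in R_M^\times$; see~\cite{Ste73}, cf.~\cite{Ma69}. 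Hence it is enough to show that each such symbol lies in $\Ker(\rho)$, and by~\cref{SteinbergPresentation} this reduces further to checking that $\rho(\{u, v\})$ fixes every root subgroup morphism $x_\beta \colon R^{(\infty)} \to \St^{(\infty)}(\Phi, R)$.

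First I would compute the action of the toral elements. For a unit $u \in R_M^\times$ set $w_\alpha(u) = x_\alpha(u)\,x_{-\alpha}(-u^{-1})\,x_\alpha(u)$ and $h_\alpha(u) = w_\alpha(u)\,w_\alpha(-1)$ in $\St(\Phi, R_M)$. As $\rho$ is a homomorphism, $\rho(h_\alpha(u))$ is the corresponding composite of the explicit automorphisms $\rho(x_{\pm\alpha}(\cdot))$ from~\cref{root-action}. The key claim is that, exactly as in the classical computation of the Weyl group action inside a Steinberg group, $\rho(w_\alpha(u))$ realizes the reflection $s_\alpha$ on root subgroups, so that
\[ \up{h_\alpha(u)}{x_\beta(b^{(\infty)})} = x_\beta\bigl(u^{\langle\beta, \alpha\rangle} b^{(\infty)}\bigr), \]
where the right-hand side is interpreted through the multiplication-by-unit endomorphism $m_{[u^{\langle\beta,\alpha\rangle}]}$ of $R^{(\infty)}$ and~\eqref{eq:m-mult}. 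I expect this Weyl-element calculation---carried out inside $\Pro(\Group)$ with careful bookkeeping of structure constants, signs, and of the index functions attached to the maps $m_{[\cdot]}$---to be the main obstacle; the last identity of~\cref{SteinbergLocalAction}, namely $\up{x_\alpha(u)}{x_\alpha(a^{(\infty)})} = x_\alpha(a^{(\infty)})$, will be used to dispose of the contributions supported on $\pm\alpha$.

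Granting the diagonal formula and recalling that $\{u, v\} = h_\alpha(uv)\,h_\alpha(u)^{-1}\,h_\alpha(v)^{-1}$, set $D_\alpha(u) := \rho(h_\alpha(u))$, so that $D_\alpha(u)$ acts by $x_\beta(b^{(\infty)}) \mapsto x_\beta(u^{\langle\beta,\alpha\rangle}b^{(\infty)})$. Because $m \colon S^{-1}R \to \Pro(\Group)(R^{(\infty)})$ is a ring homomorphism, one has $u^{\langle\beta,\alpha\rangle}v^{\langle\beta,\alpha\rangle} = (uv)^{\langle\beta,\alpha\rangle}$ as endomorphisms of $R^{(\infty)}$, whence $D_\alpha(uv) = D_\alpha(u)\,D_\alpha(v)$ by~\cref{SteinbergPresentation}. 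Therefore
\[ \rho(\{u, v\}) = D_\alpha(uv)\,D_\alpha(u)^{-1}\,D_\alpha(v)^{-1} = \mathrm{id}, \]
so every symbol acts trivially and $\rho$ descends to the desired action of $\GG_{\mathrm{sc}}(\Phi, R_M)$ on $\St^{(\infty)}(\Phi, R)$.

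Finally, the equivariance of $\mathrm{st}$ is inherited from~\cref{SteinbergLocalAction}. There $\mathrm{st}$ was shown to intertwine the $\St(\Phi, R_M)$-action on $\St^{(\infty)}(\Phi, R)$ with the $\GG_{\mathrm{sc}}(\Phi, R_M)$-action on $\GG^{(\infty)}(\Phi, R)$ of~\cref{sec:local-Chevalley}, the latter pulled back along $\mathrm{st} \colon \St(\Phi, R_M) \to \GG_{\mathrm{sc}}(\Phi, R_M)$. Since the descended action of $g$ depends only on $\mathrm{st}(g)$, this compatibility is exactly the $\GG_{\mathrm{sc}}(\Phi, R_M)$-equivariance of $\mathrm{st}$, which completes the proof.
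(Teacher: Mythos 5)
Your proposal is correct and follows essentially the same route as the paper: both factor the $\St(\Phi, R_M)$-action of~\cref{SteinbergLocalAction} through $\GG_{\mathrm{sc}}(\Phi, R_M)$ by invoking Stein's presentation of the simply-connected Chevalley group over a local ring as the quotient of the Steinberg group by the symbols $\{u,v\}_\alpha$, and both kill the symbols via the diagonal formula $\up{h_\alpha(u)}{x_\beta(b^{(\infty)})} = x_\beta(u^{\langle\beta,\alpha\rangle}b^{(\infty)})$ together with multiplicativity of $u \mapsto u^{\langle\beta,\alpha\rangle}$ in $\Pro(\Group)(R^{(\infty)})$. The only noteworthy difference is that the Weyl-element bookkeeping you flag as the main obstacle is dispatched in the paper by a cleaner device --- the conjugation by $h_\alpha(a)$ is expanded as $\prod x_{i\alpha+j\beta}(P_{i,j}(a)\cdot{b^{(\infty)}}^j)$ with universal Laurent polynomials $P_{i,j}$, which are then identified by comparing with the known identity in $\St(\Phi,\ZZ[t,t^{-1}])$ via~\cref{rem:uni-rad} --- and the paper verifies the formula only for $\beta \neq \pm\alpha$, which suffices since those root subgroup morphisms generate $\St^{(\infty)}(\Phi,R)$ categorically.
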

\begin{proof}
 Recall that for \(a, b \in R_M^\times\) and \(\alpha \in \Phi\) one can define the following elements of $\St(\Phi, R_M)$: 
  \begin{align*} w_\alpha(a) & =  x_\alpha(a) \cdot x_{-\alpha}(-a^{-1}) \cdot x_\alpha(a), \\
                 h_\alpha(a) & =  w_\alpha(a) \cdot w_\alpha(1)^{-1}, \\
                 \{ a,\-b \}_\alpha & = h_\alpha(ab) \cdot h_\alpha^{-1}(a) \cdot h_\alpha^{-1}(b). \end{align*}
 Since $R_M$ is local, the group $\GG_{\mathrm{sc}}(\Phi, R_M)$ coincides with its elementary subgroup $\E_{\mathrm{sc}}(\Phi, R)$.
 Consquently, by~\cite[Proposition~1.6]{Abe69} and \cite[Theorem~2.13]{Ste73} the group \(\GG_{\mathrm{sc}}(\Phi, R_M)\) is the quotient of \(\St(\Phi, R_M)\) by the relations $\{a,\-b\}_\alpha = 1$ for \(a, b \in R_M^\times\) and some fixed long root $\alpha\in\Phi$. Thus, it remains to check that the Steinberg symbols $\{a,\-b\}_\alpha$ act trivially on \(\St^{(\infty)}(\Phi, R)\), which, in turn, follows from the relations
 \begin{equation}\label{eq:h-conj} \up{h_\alpha(a)}x_\beta(b^{(\infty)}) = x_\beta(a^{\langle \beta, \alpha \rangle}b^{(\infty)}),\ \beta\neq\pm\alpha. \end{equation}
 
To prove~\eqref{eq:h-conj} take a root \(\beta \in \Phi\) linearly independent with \(\alpha\).
 Direct computation using~\cref{root-action} shows that
 \[\up{h_\alpha(a)}{x_\beta(b^{(\infty)})} = \prod_{\substack{i\alpha + j\beta \in \Phi\\ j > 0}} x_{i\alpha + j\beta}(P_{i, j}(a) \cdot {b^{(\infty)}}^j)\]
 for some Laurent polynomials \(P_{i, j}(t) \in \mathbb Z[t, t^{-1}]\) depending only on \(\alpha\) and \(\beta\) and the chosen order of factors. Since in the Steinberg group \(\St(\Phi, \mathbb Z[t, t^{-1}])\) we have a similar identity
 \[\up{h_\alpha(t)}{x_\beta(1)} = \prod_{\substack{i\alpha + j\beta \in \Phi\\ j > 0}} x_{i\alpha + j\beta}(P_{i, j}(t)) = x_{\beta}(t^{\langle \beta,\-\alpha \rangle}),\]
 we conclude by~\cref{rem:uni-rad} that \(P_{i, j}(t) = 0\) with the sole exception of \(P_{0, 1}(t) = t^{(\beta, \alpha)}\). Thus, the proof of~\eqref{eq:h-conj} is complete.
\end{proof}

\section{Proof of the main result} \label{sec:proof-main}
\subsection{The construction of a crossed module}
In this section we prove the main results of the paper, namely we construct a crossed module structure on the homomorphism $\mathrm{st}$ and prove the centrality of $\mathrm{K}_2$. Let us, first, briefly recall the definition of a crossed module. 

\begin{df} \label{df:crossed-module}
Let $N$ be a group acting on itself by left conjugation.
A group homomorphism $\varphi\colon M \to N$ is called a {\it crossed module} if one can define the action of the group $N$ on $M$ in such a way that that $\varphi$ preserves the action of $N$ and, moreover, the identity ${}^{\varphi(m)}\!m' = m m' m^{-1}$, called {\it Peiffer identity}, holds for all $m, m' \in M$.
\end{df}
It is an easy exercise to check that the kernel of $\varphi$ is always a central subgroup of $M$, while the image of $\varphi$ is normal in $N$.

For the rest of this section $R$ is an arbitrary commutative ring and $M$ is a maximal ideal of $R$.
As before, \(\Phi\) is a root system of rank \(\geq 3\) different from \(\rB_\ell\) and \(\rC_\ell\).

 We denote by $S$ the multiplicative system associated to $M$, i.\,e. $S= R\setminus M$. By~\cref{ChevalleyLocalAction} the group $\GG_{\mathrm{sc}}(\Phi, R_M)$ acts on both $\St^{(\infty)}(\Phi, R)$ and $\GG^{(\infty)}(\Phi, R)$ by conjugation and this action is preserved by the morphism $\mathrm{st}$. Consequently, $\GG_{\mathrm{sc}}(\Phi, R)$ act one these pro-groups by their images in $\GG_\mathrm{sc}(\Phi, R_M) $. Consider the following canonical morphisms 
\begin{align*}
 \pi_R \colon R^{(\infty)} &\to R,\\
 \pi_{\St} \colon \St^{(\infty)}(\Phi, R) &\to \St(\Phi, R),\\
 \pi_{\GG} \colon \GG^{(\infty)}(\Phi, R) &\to \GG_{\mathrm{sc}}(\Phi, R).
\end{align*}

 Notice that the action of $h\in \GG_{\mathrm{sc}}(\Phi, R)$ on the pro-group $\GG^{(\infty)}(\Phi, R)$ is compatible with the conjugation action of $\GG_{\mathrm{sc}}(\Phi, R)$ on itself, i.\,e. $\up{h}\pi_{\GG}(g^{(\infty)}) = \pi_{\GG}(\up{h} {g^{(\infty)}})$.
Similarly, the conjugation action of $h\in \St(\Phi, R)$ on $\St^{(\infty)}(\Phi, R)$ is compatible with the conjugation action of $\St(\Phi, R)$ on itself, i.\,e. $\up{h}\pi_{\St}(g^{(\infty)}) = \pi_{\St}(\up{h}{g^{(\infty)}})$.
Notice also that $x_\alpha \pi_R = \pi_{\St} x_\alpha$ and $\pi_{\GG}\, \mathrm{st} = \mathrm{st}\, \pi_{\St}$.

\begin{lemma}\label{CentralityK2}
 The subgroup \(\mathrm K_2(\Phi, R) = \Ker(\mathrm{st} \colon \St(\Phi, R) \to \GG(\Phi, R))\) is contained in the center of \(\St(\Phi, R)\).
\end{lemma}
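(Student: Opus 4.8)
The plan is to run a Quillen--Suslin local--global argument, using the localized action of~\cref{ChevalleyLocalAction} to transport the (classically known) centrality of $\mathrm K_2$ over each local ring $R_M$ back to $\St(\Phi, R)$. Fix $g \in \mathrm K_2(\Phi, R)$ and a root $\alpha \in \Phi$; since the elements $x_\alpha(r)$ generate $\St(\Phi, R)$, it suffices to prove $[g, x_\alpha(r)] = 1$ for every $r \in R$ and every $\alpha$. To this end I would consider the ``conductor''
\[ I_\alpha = \{\, a \in R \mid [g,\, x_\alpha(ab)] = 1 \text{ for all } b \in R \,\}. \]
A short computation with the commutator identities~\eqref{eq:comm-mult-rhs} and~\eqref{eq:comm-mult-lhs} shows that $I_\alpha$ is an ideal of $R$: closure under addition follows from~\eqref{eq:comm-mult-rhs}, and closure under multiplication by $R$ is immediate from $(ca)b = a(cb)$. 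Thus it is enough to prove $I_\alpha = R$. Assuming the contrary, fix a maximal ideal $M \trianglelefteq R$ with $I_\alpha \subseteq M$ and set $S = R \setminus M$, so that $S^{-1}R = R_M$.

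Next I would localize. Writing $\lambda_M \colon \St(\Phi, R) \to \St(\Phi, R_M)$ for the map induced by $R \to R_M$, naturality of $\mathrm{st}$ gives $\mathrm{st}(\lambda_M(g)) = \lambda_M(\mathrm{st}(g)) = 1$, so $\lambda_M(g)$ lies in $\mathrm K_2(\Phi, R_M) = \Ker\bigl(\St(\Phi, R_M) \to \GG_{\mathrm{sc}}(\Phi, R_M)\bigr)$. By~\cref{ChevalleyLocalAction} the action of $\St(\Phi, R_M)$ on $\St^{(\infty)}(\Phi, R)$ from~\cref{SteinbergLocalAction} descends to $\GG_{\mathrm{sc}}(\Phi, R_M) = \St(\Phi, R_M)/\mathrm K_2(\Phi, R_M)$; hence $\lambda_M(g)$, being in the kernel, acts as the identity automorphism of $\St^{(\infty)}(\Phi, R)$. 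In particular $\up{g}{x_\alpha(a^{(\infty)})} = x_\alpha(a^{(\infty)})$ as morphisms $R^{(\infty)} \to \St^{(\infty)}(\Phi, R)$, where $\up{g}{(-)}$ denotes the action through $\lambda_M(g)$.

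It remains to descend this triviality to $\St(\Phi, R)$ itself, and here I would apply $\pi_{\St}$ and use the compatibility recorded immediately before the statement, namely $\pi_{\St}(\up{g}{h^{(\infty)}}) = \up{g}\pi_{\St}(h^{(\infty)})$ (right-hand $\up{g}{(-)}$ being honest conjugation), together with $\pi_{\St}\, x_\alpha = x_\alpha\, \pi_R$. Applying $\pi_{\St}$ to the identity of the previous paragraph yields
\[ \up{g}\bigl((x_\alpha \pi_R)(a^{(\infty)})\bigr) = (x_\alpha \pi_R)(a^{(\infty)}) \]
as an equality of morphisms $R^{(\infty)} \to \St(\Phi, R)$ in $\Pro(\Group)$. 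Since the target is a constant pro-group, by~\eqref{eq:pro-c-hom-2} both sides lie in $\varinjlim_{s \in \mathcal{S}} \Group\bigl(R^{(s)}, \St(\Phi, R)\bigr)$, where the right-hand morphism is represented at index $s$ by $a^{(s)} \mapsto x_\alpha(sa)$ and the left-hand one by $a^{(s)} \mapsto g\, x_\alpha(sa)\, g^{-1}$. Equality in a filtered colimit means these two representatives agree after passing to some index $s_0 \in S$, i.e.\ $g\, x_\alpha(s_0 a)\, g^{-1} = x_\alpha(s_0 a)$ for all $a \in R$. Thus $s_0 \in I_\alpha$, contradicting $s_0 \in S = R \setminus M$ and $I_\alpha \subseteq M$. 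Hence $I_\alpha = R$, and letting $\alpha$ and $r$ vary shows $g$ is central.

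The main obstacle is this final descent: one must correctly extract the single index $s_0$ from an equality of pro-group morphisms into the \emph{constant} object $\St(\Phi, R)$ via~\eqref{eq:pro-c-hom-2}, which is precisely the mechanism converting the a priori index-dependent localized action into an honest dilation statement $[g, x_\alpha(s_0 R)] = 1$. All of the genuinely heavy input --- the construction of the localized action and its compatibility both with the quotient to $\GG_{\mathrm{sc}}(\Phi, R_M)$ and with $\pi_{\St}$ --- has already been carried out in~\cref{SteinbergLocalAction,ChevalleyLocalAction} and the remarks preceding the lemma, so the remaining work is bookkeeping with the colimit formula rather than new group-theoretic content.
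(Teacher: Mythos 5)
Your proposal is correct and follows essentially the same route as the paper's own proof: the same conductor ideal $I$, localization at a maximal ideal, triviality of the action of $g$ on $\St^{(\infty)}(\Phi, R)$ via \cref{ChevalleyLocalAction}, the compatibilities $\pi_{\St}\,x_\alpha = x_\alpha\,\pi_R$ and $\up{g}\pi_{\St} = \pi_{\St}\,\up{g}(-)$, and the conclusion $I \not\subseteq M$. You merely make explicit some steps the paper leaves implicit (that $I$ is an ideal, why $\lambda_M(g)$ acts trivially, and the extraction of $s_0$ from the filtered colimit formula~\eqref{eq:pro-c-hom-2}), all of which are sound.
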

\begin{proof}
 Let $g$ be an element of $\mathrm K_2(\Phi, R)$ and $\alpha \in \Phi$ be a root. Consider the ideal
 \[I_\alpha = \{a \in R \mid \up g {x_\alpha(ra)} = x_\alpha(ra) \text{ for all } r \in R\}.\]
 We need to show that \(I_\alpha = R\). Let $M \trianglelefteq R$ be a maximal ideal.
 By Theorem~2, the element \(g\) acts trivially on the pro-group \(\St^{(\infty)}(\Phi, R)\) associated with $S = R \setminus M$. Consequently, we obtain the following equality of morphisms $R^{(\infty)} \to \St(\Phi, R)$:
 \begin{multline*}
  \up g {x_\alpha(\pi_R(a^{(\infty)}))}
  = \up g{\pi_{\St}(x_\alpha(a^{(\infty)}))}
  = \pi_{\St}(\up g{x_\alpha(a^{(\infty)})}) = \\
  = \pi_{\St}(x_\alpha(a^{(\infty)}))
  = x_\alpha(\pi_R(a^{(\infty)})),
 \end{multline*}
 which implies that \(I_\alpha \not \subseteq M\).
\end{proof}

With similar technique, we also can re-prove Taddei's normality theorem (see~\cite{Ta86}).
\begin{lemma}\label{Normality}
 The subgroup \(\mathrm E(\Phi, R) = \mathrm{Im}(\mathrm{st} \colon \St(\Phi, R) \to \GG_{\mathrm{sc}}(\Phi, R))\) is normal in \(\GG_{\mathrm{sc}}(\Phi, R)\).
\end{lemma}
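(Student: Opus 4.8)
The plan is to follow the same localization strategy as in~\cref{CentralityK2}, but now working on the Chevalley side through $\pi_{\GG}$ rather than exploiting triviality of the local action. Since $\mathrm E(\Phi, R)$ is generated by the root unipotents $t_\alpha(r) = \mathrm{st}(x_\alpha(r))$ and conjugation by a fixed $g$ is an automorphism, it suffices to prove that $\up{g}{t_\alpha(r)} \in \mathrm E(\Phi, R)$ for every $g \in \GG_{\mathrm{sc}}(\Phi, R)$, every root $\alpha$ and every $r \in R$. Fixing $g$ and $\alpha$, I would introduce the set
\[I = \{a \in R \mid \up{g}{t_\alpha(ra)} \in \mathrm E(\Phi, R) \text{ for all } r \in R\}\]
and check, using that $t_\alpha$ is additive and $\mathrm E(\Phi, R)$ is a subgroup, that $I$ is an ideal of $R$. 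As in~\cref{CentralityK2}, it then suffices to show $I \not\subseteq M$ for every maximal ideal $M \trianglelefteq R$; assuming the contrary, I set $S = R \setminus M$ and pass to the associated pro-groups.

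The heart of the argument is to realize genuine conjugation by $g$ on a single root unipotent as the $\pi$-image of a pro-group morphism coming from the local action. Let $\up{g}(-)$ denote the action of the image of $g$ in $\GG_{\mathrm{sc}}(\Phi, R_M)$ on $\St^{(\infty)}(\Phi, R)$ supplied by~\cref{ChevalleyLocalAction}, and consider the composite pro-group morphism
\[\pi_{\St} \circ \up{g}{x_\alpha} \colon R^{(\infty)} \to \St(\Phi, R).\]
Since its codomain is a constant pro-group, by~\eqref{eq:pro-c-hom-2} this morphism is represented by a single group homomorphism $\psi \colon R^{(s_0)} \to \St(\Phi, R)$ for some $s_0 \in S$; in particular $\mathrm{st} \circ \psi$ takes values inside $\mathrm E(\Phi, R)$ by the very definition of $\mathrm E(\Phi, R)$.

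It remains to identify $\mathrm{st} \circ \psi$ with genuine conjugation. Combining the equivariance of $\mathrm{st}$ at the pro-level (\cref{ChevalleyLocalAction}), the compatibility $\up{g}{\pi_{\GG}(h^{(\infty)})} = \pi_{\GG}(\up{g}{h^{(\infty)}})$ available for $g \in \GG_{\mathrm{sc}}(\Phi, R)$, and the identities $\pi_{\GG}\,\mathrm{st} = \mathrm{st}\,\pi_{\St}$ and $\pi_{\St}\,x_\alpha = x_\alpha\,\pi_R$ recorded before~\cref{CentralityK2}, I would compute
\[\mathrm{st}\circ\pi_{\St}\circ\up{g}{x_\alpha} = \up{g}{(\pi_{\GG}\circ\mathrm{st}\circ x_\alpha)} = \up{g}{(t_\alpha\circ\pi_R)},\]
so that $\mathrm{st}\circ\psi$ and $\up{g}{t_\alpha(\pi_R(-))}$ coincide as morphisms $R^{(\infty)} \to \GG_{\mathrm{sc}}(\Phi, R)$. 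Comparing the two representatives over a common index $s_1 \in S$ divisible by $s_0$, and recalling that $\pi_R$ evaluates the class of $a^{(s_1)}$ to $s_1 a$, yields $\up{g}{t_\alpha(s_1 a)} \in \mathrm E(\Phi, R)$ for all $a \in R$. Hence $s_1 \in I \cap S$, contradicting $I \subseteq M$. Therefore $I = R$; in particular $1 \in I$, which gives $\up{g}{t_\alpha(r)} \in \mathrm E(\Phi, R)$ for all $r$ and completes the proof.

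The main obstacle, as usual in this formalism, is the bookkeeping involved in descending from the pro-morphism to a representative at a finite index: one must ensure that the single homomorphism $\psi$ representing $\pi_{\St}\circ\up{g}{x_\alpha}$ and the representative of $\up{g}{t_\alpha(\pi_R(-))}$ can be compared over a common index $s_1 \notin M$, and that this comparison genuinely evaluates honest conjugation on the elements $t_\alpha(s_1 a)$. Everything else is a formal consequence of the equivariance and compatibility already established; crucially, unlike in~\cref{CentralityK2}, we do not use that the local action is trivial, only that it is compatible with honest conjugation via $\pi_{\GG}$, which is precisely why the argument applies to an arbitrary $g \in \GG_{\mathrm{sc}}(\Phi, R)$ rather than merely to elements of $\mathrm K_2(\Phi, R)$.
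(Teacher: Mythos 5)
Your proof is correct and takes essentially the same route as the paper's: the same ideal $I$, the same reduction to $I \not\subseteq M$ for each maximal ideal $M$, and the same chain of identities $\up g{t_\alpha(\pi_R(a^{(\infty)}))} = \up g{\pi_{\GG}(\mathrm{st}(x_\alpha(a^{(\infty)})))} = \pi_{\GG}(\mathrm{st}(\up g{x_\alpha(a^{(\infty)})})) = \mathrm{st}(\pi_{\St}(\up g{x_\alpha(a^{(\infty)})}))$ obtained from the equivariance of $\mathrm{st}$ in~\cref{ChevalleyLocalAction} and the compatibilities of $\pi_{\St}$, $\pi_{\GG}$ with conjugation. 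The only difference is expository: you make explicit the index bookkeeping (representing the pro-morphism by a single homomorphism $\psi \colon R^{(s_0)} \to \St(\Phi, R)$ and evaluating $\pi_R$ at index $s_1$ as $a^{(s_1)} \mapsto s_1 a$) that the paper compresses into the phrase ``which implies that $I \not\subseteq M$''.
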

\begin{proof}
 Let $g$ be an element of $\GG_{\mathrm{sc}}(\Phi, R)\) and \(\alpha \in \Phi\) be a root. Consider the ideal
 \[I_\alpha = \{a \in R \mid \up g{t_\alpha(ra)} \in \mathrm E(\Phi, R)\text{ for all }r\in R\}.\]
 Again, we need to show that \(I_\alpha = R\). Let \(M \trianglelefteq R\) be a maximal ideal.
 Since \(g\) acts on the pro-groups \(\St^{(\infty)}(\Phi, R)\) and $\GG^{(\infty)}(\Phi, R)$ associated with the subset $S = R \setminus M$ and 
 this action is preserved by $\mathrm{st}$, we obtain the following equalities of morphisms $R^{(\infty)} \to \GG_{\mathrm{sc}}(\Phi, R)$:
 \[
  \up g{t_\alpha(\pi_R(a^{(\infty)}))}
  = \up g{\pi_{\GG}(\mathrm{st}(x_\alpha(a^{(\infty)})))}
  = \pi_{\GG}(\mathrm{st}(\up g{x_\alpha(a^{(\infty)})}))
  = \mathrm{st}(\pi_{\St}(\up g{x_\alpha(a^{(\infty)})})),
 \]
 which implies that \(I_\alpha \not \subseteq M\).
\end{proof}

\begin{theorem}\label{SteinbergCrossedModule}
 The homomorphism \(\mathrm{st} \colon \St(\Phi, R) \to \GG_{\mathrm{sc}}(\Phi, R)\) can be turned into a crossed module in a unique way.
\end{theorem}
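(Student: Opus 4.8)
The plan is to build the required action of $\GG_{\mathrm{sc}}(\Phi, R)$ on $\St(\Phi, R)$ out of the local pro-group actions of \cref{ChevalleyLocalAction}, and then to check the two axioms of \cref{df:crossed-module}: the equivariance $\mathrm{st}(\up g m) = \up{g}{\mathrm{st}(m)}$ for $g \in \GG_{\mathrm{sc}}(\Phi, R)$, $m \in \St(\Phi, R)$, and the Peiffer identity $\up{\mathrm{st}(m)}{m'} = m m' m^{-1}$. I would dispatch uniqueness first, since it is clean and constrains the construction. If $\bullet$ and $\ast$ are two crossed-module actions, then for fixed $g$ the assignment $\delta_g \colon m \mapsto (g\bullet m)(g \ast m)^{-1}$ lands in $\mathrm K_2(\Phi, R) = \Ker(\mathrm{st})$ by equivariance; since $\mathrm K_2(\Phi, R)$ is central by \cref{CentralityK2}, a one-line computation gives $\delta_g(m m') = (g\bullet m)\,\delta_g(m')\,(g\ast m)^{-1} = \delta_g(m)\delta_g(m')$, so $\delta_g$ is a homomorphism $\St(\Phi, R) \to \mathrm K_2(\Phi, R)$. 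As $\St(\Phi, R)$ is perfect (each generator is a commutator by~\eqref{R3}) and $\mathrm K_2(\Phi, R)$ is abelian, $\delta_g$ is trivial and $\bullet = \ast$.

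For existence I would define the action on the generators $x_\alpha(a)$ by assembling the local data, in the same spirit as the proofs of \cref{CentralityK2,Normality}. Fix $g \in \GG_{\mathrm{sc}}(\Phi, R)$ and a maximal ideal $M$; localizing $g$ to $g_M \in \GG_{\mathrm{sc}}(\Phi, R_M)$ and invoking the action of \cref{ChevalleyLocalAction} on the pro-group $\St^{(\infty)}(\Phi, R)$ attached to $S = R\setminus M$, the composite $\pi_{\St}\circ \up{g_M}{(-)} \circ x_\alpha$ is a morphism $R^{(\infty)} \to \St(\Phi, R)$, which by~\eqref{eq:pro-c-hom-2} is represented by a group homomorphism $R^{(s)} \to \St(\Phi, R)$ for some $s \notin M$; this prescribes $\up g{x_\alpha(sa)}$ in a neighbourhood of $M$. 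As $M$ varies the admissible $s$ generate the unit ideal, so finitely many suffice, and I would patch the corresponding local homomorphisms into a single additive assignment $a \mapsto \up g{x_\alpha(a)}$.

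Next I would verify that the maps $\up g{x_\alpha(-)}$ satisfy the Steinberg relations~\eqref{R1}--\eqref{R4}; each such relation is obtained by pushing the corresponding pro-group relation forward along $\pi_{\St}$ (exactly the mechanism of \cref{SteinbergLocalAction} and \cref{SingleRootElimination}), so it holds on $sR$ for suitable $s$ near every $M$ and hence globally. This yields an endomorphism $\up g{(-)}$ of $\St(\Phi, R)$; functoriality of the local actions in $g$ then makes $g \mapsto \up g{(-)}$ a genuine action, and the $\mathrm{st}$-equivariance of the local actions (\cref{ChevalleyLocalAction}) upgrades on generators to $\mathrm{st}(\up g{x_\alpha(a)}) = g\, t_\alpha(a)\, g^{-1}$, giving the equivariance axiom. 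The Peiffer identity comes for free: when $g = \mathrm{st}(m) \in \E(\Phi, R)$, the pro-group action of $g_M = \mathrm{st}(m_M)$ is compatible with honest conjugation by $m$, that is $\up m{\pi_{\St}(y^{(\infty)})} = \pi_{\St}(\up m{y^{(\infty)}})$ as used in \cref{CentralityK2,Normality}, so the local prescription reduces to $\up{\mathrm{st}(m)}{x_\alpha(a)} = m\, x_\alpha(a)\, m^{-1}$.

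The hard part is the well-definedness of the patched assignment $a \mapsto \up g{x_\alpha(a)}$, that is, showing the local prescriptions coming from different maximal ideals glue on the nose and that the resulting endomorphism is independent of the chosen partition of the unit ideal. A priori two such prescriptions agree only modulo the central subgroup $\mathrm K_2(\Phi, R)$, because by equivariance both lift the same honest conjugate $g\, t_\alpha(a)\, g^{-1} \in \E(\Phi, R)$, which is well-defined by \cref{Normality}. Eliminating these central discrepancies — so that the relations hold over $R$ and not merely over each $R_M$ — is where the localization formalism must be exploited in full, and here centrality of $\mathrm K_2(\Phi, R)$ together with the perfectness of $\St(\Phi, R)$ are once more the decisive tools, exactly as in the uniqueness argument above.
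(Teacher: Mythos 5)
Your uniqueness argument is correct and is essentially the paper's (the paper cites \cite[Lemma~1.1]{St71}, whose content is exactly your computation with $\delta_g$: perfectness of $\St(\Phi,R)$ plus centrality of $\mathrm K_2(\Phi,R)$ force at most one action satisfying equivariance, and the Peiffer identity then comes for free by comparing $\up{\mathrm{st}(m)}{(-)}$ with honest conjugation by $m$). The existence half, however, has a genuine gap, and it is precisely the one you flag in your last paragraph: the patching of the local prescriptions cannot be repaired ``exactly as in the uniqueness argument.'' If $\phi_1 \colon sR \to \St(\Phi,R)$ and $\phi_2 \colon s'R \to \St(\Phi,R)$ are two local lifts of $a \mapsto g\,t_\alpha(a)\,g^{-1}$ coming from two maximal ideals, their discrepancy $\delta(a) = \phi_1(a)\phi_2(a)^{-1}$ on $ss'R$ lands in $\mathrm K_2(\Phi,R)$ and is additive (centrality of $\mathrm K_2$ gives $\delta(a+b)=\delta(a)\delta(b)$) --- but its domain is the \emph{abelian} group $(ss'R,+)$, not the perfect group $\St(\Phi,R)$, so perfectness gives you nothing and $\delta$ has no reason to vanish. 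The uniqueness trick applies only once you already have two \emph{globally defined} actions; it cannot manufacture agreement of partially defined local lifts. As stated, your construction produces $\up{g}{x_\alpha(a)}$ only up to an uncontrolled central additive error, and the verification of \eqref{R1}--\eqref{R4} on the nose does not go through.

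The paper closes this gap by a different device that avoids patching altogether. By \cref{Normality} the fibre $Y_\alpha(a) = \mathrm{st}^{-1}\bigl(g\,t_\alpha(a)\,g^{-1}\bigr)$ is nonempty, and by \cref{CentralityK2} it is a coset of the \emph{central} subgroup $\mathrm K_2(\Phi,R)$; since a commutator is unchanged when its entries are multiplied by central elements, the set $[Y_\beta(N_{\beta,\gamma}),\,Y_\gamma(a)]$ is a \emph{single element} whenever $\alpha = \beta + \gamma$ with $\beta,\gamma$ of the same length as $\alpha$ (this uses $\Phi \neq \rB_\ell, \rC_\ell$), and this element is taken as the definition $y_\alpha(a) := \up{g}{x_\alpha(a)}$. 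This lift is globally and canonically defined, with no choice of $s$ or of a covering of $\mathrm{Spec}\,R$; the pro-group actions of \cref{ChevalleyLocalAction} enter only afterwards, to \emph{verify} the relations: one first checks $[Y_\alpha(a), Y_\beta(b)] = 1$ for $\alpha+\beta \notin \Phi \cup \{0\}$ and then \eqref{R3}, \eqref{R4} for the $y_\alpha$'s by your own localization pattern (form the ideal $I$ or $J$ of good parameters, show $I \not\subseteq M$ for every maximal $M$ by pushing the pro-group identity through $\pi_{\St}$). That pattern is sound in the paper because there it is used to verify an identity between \emph{already-defined} global elements --- which is legitimate --- rather than to define the elements themselves, which is where your version breaks down. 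So: keep your uniqueness paragraph, replace the patched definition by the coset-commutator definition of $y_\alpha(a)$, and your relation-checking machinery then works as intended.
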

\begin{proof}  
 Notice that by~\cref{CentralityK2} we already know that $\mathrm K_2(\Phi, R)$ is a central subgroup of $\St(\Phi, R)$. Under our assumptions the group $\St(\Phi, R)$ is a perfect central extension of $\E_{\mathrm{sc}}(\Phi, R)$, therefore by~\cite[Lemma~1.1]{St71} for every $g\in \GG_{\mathrm{sc}}(\Phi, R)$ there may exist at most one endomorphism $\up g (-)$ of $\St(\Phi, R)$ satisfying 
 \begin{equation} \label{eq:scm-2} g\, \mathrm{st}(h) g^{-1} = \mathrm{st} (\up g h)\text{ for all }h \in \St(\Phi, R).  \end{equation}
 Thus, to construct an action of $\GG_{\mathrm{sc}}(\Phi, R)$ on $\St(\Phi, R)$ it suffices to construct an endomorphism 
  \(\up g{(-)} \colon \St(\Phi, R) \to \St(\Phi, R)\) satisfying~\eqref{eq:scm-2}.
 Also, this action would automatically satisfy Peiffer identity.

 Let $\alpha$ be a root of $\Phi$. Since \(\mathrm E(\Phi, R) \trianglelefteq \GG_{\mathrm{sc}}(\Phi, R)\) by~\cref{Normality}, the set
 $Y_\alpha(a) = \mathrm{st}^{-1}\bigl(g{t_\alpha(a)} g^{-1}\bigr)$ is nonempty for all \(a \in R\). 
 Moreover, $Y_\alpha(a)$ is a coset of the subgroup \(\mathrm K_2(\Phi, R)\) and \(Y_\alpha(a + a') = Y_\alpha(a) Y_\alpha(a')\).

 First of all, let us show that \([Y_\alpha(a), Y_\beta(b)] = 1\) provided \(\alpha + \beta \notin \Phi \cup \{0\}\). 
 Let \(\alpha, \beta\) be roots as above, $a$ be an element of $R$ and $h$ be an element of $Y_\alpha(a)\).
 Set $I = \{b \in R \mid [h, Y_\beta(bR)] = 1\}$ and let \(M \trianglelefteq R\) be a maximal ideal. Since
 \[[h, \pi_{\St}(\up g {x_\beta(b^{(\infty)})})] = \pi_{\St}(\up{g t_\alpha(a)} {x_\beta(b^{(\infty)})}) \pi_{\St}(\up g {x_\beta(-b^{(\infty)})}) = 1,\]
 we obtain  \(I \not \subseteq M\) and, consequently, \(I = R\), which proves the assertion.

 Now let $\alpha$ be a root of $\Phi$. By our assumptions on $\Phi$ there exist roots \(\beta, \gamma \in \Phi\) of the same length such that \(\alpha = \beta + \gamma\). Denote by \(y_\alpha(a)\) the only element of the set \([Y_\beta(N_{\beta, \gamma}), Y_\gamma(a)]\). 
 We define the map $\up g (-)$ on the set of generators of $\St(\Phi, R)$ by $\up g {x_\alpha(a)} = y_\alpha(a)$, and we claim that such a definition induces an endomorphism, i.e., that $y_{\alpha}(a)$ satisfy the Steiberg relations. 
 Clearly, \(y_\alpha(a)\) satisfy~\eqref{R1}. Since $y_\alpha(a) \in Y_\alpha(a)$, it is also clear that they satisfy relations~\eqref{R2}.
 
 It remains to check that \(y_\alpha(a)\) satisfy the Steinberg relations \eqref{R3} and \eqref{R4}.
 Let \(M \trianglelefteq R\) be a maximal ideal. Recall that \(\pi_{\St}(\up g{x_\gamma(a^{(\infty)})})\) denotes a certain pro-group morphism \(R^{(\infty)} \to \St(\Phi, R)\), so \([h, \pi_{\St}(\up g{x_\gamma(a^{(\infty)})})]\) is a pro-set morphism \(R^{(\infty)} \to \St(\Phi, R)\) for a fixed \(h \in Y_\beta(N_{\beta, \gamma})\). Since
 \[\mathrm{st}(\pi_{\St}(\up g{x_\gamma(a^{(\infty)})})) = g t_\gamma(\pi_R(a^{(\infty)})) g^{-1},\]
 we have
 \begin{align} 
  y_\alpha(\pi_R(a^{(\infty)})) &= [h, \pi_{\St}(\up g{x_\gamma(a^{(\infty)})})] \nonumber \\
  &= \pi_{\St}(\up{\mathrm{st}(h)\, g}{x_\gamma(a^{(\infty)})})\, \pi_{\St}(\up g{x_\gamma(a^{(\infty)})})^{-1} \label{eq:scm1} \\
  &= \pi_{\St}\bigl(\up{g t_\beta(N_{\beta, \gamma})}{x_\gamma(a^{(\infty)})}\, \up g{x_\gamma(-a^{(\infty)})}\bigr) = \pi_{\St}(\up g{x_\alpha(a^{(\infty)})}).\nonumber
 \end{align}
 
 Now let $a$ be an element of $R$ and \(\alpha, \beta\) be roots of $\Phi$ such that \(\alpha + \beta, 2\alpha + \beta \in \Phi\). 
 Consider the ideal
 \begin{align*}
 J = \{b \in R &\mid [y_\alpha(a), y_\beta(br)] = y_{\alpha + \beta}(N_{\alpha, \beta} abr)
 y_{2\alpha + \beta}(N_{\alpha, \beta}^{2,1} a^2 br) \text{ for all } r \in R\}.
 \end{align*}
 From~\eqref{eq:scm1} we obtain 
 \begin{align*}
 [y_\alpha(a), y_\beta(\pi_R(b^{(\infty)}))] &= [y_\alpha(a), \pi_{\St}(\up g{x_\beta(b^{(\infty)})})]\\ 
 &= \pi_{\St}\bigl(\up{g t_\alpha(a)}{x_\beta(b^{(\infty)})}\, \up g{x_\beta(-b^{(\infty)})}\bigr)\\
 &= \pi_{\St}\bigl(\up g{x_{\alpha + \beta}(N_{\alpha, \beta} a b^{(\infty)})}\, \up g{x_{2\alpha + \beta}(N_{\alpha, \beta}^{2,1} a^2 b^{(\infty)})}\bigr)\\
 &= y_{\alpha + \beta}(N_{\alpha, \beta} a \pi_R(b^{(\infty)})) y_{2\alpha + \beta}(N_{\alpha, \beta}^{2,1} a^2 \pi_R(b^{(\infty)})).
 \end{align*}
 It follows that \(J \not \subseteq M\), i.e. \(J = R\). The verification of~\eqref{R3} is similar but easier.   
 It is clear that the constructed endomorphism satisfies~\eqref{eq:scm-2}. \end{proof}
 
 \subsection{Concluding remarks} \label{no-homotopes}
 It is possible to prove the centrality of $\mathrm{K}_2$ without using the concept of a ring homotope introduced in~\cref{ring-homotope}.
 Indeed, the idea is to reduce to the case of domains using the following result inspired by \cite[Lemma~5.3]{St99}.
 \begin{lemma} Let $R$ be a commutative ring, $I$ be an ideal of $R$ and $\Phi$ be a root system of rank $\geq 3$.
 The centrality of $\mathrm{K}_2(\Phi, R)$ implies the centrality of $\mathrm K_2(\Phi, R/I)$. \end{lemma}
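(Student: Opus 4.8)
The plan is to show that the hypothesis forces the canonical homomorphism $\mathrm K_2(\Phi, R)\to\mathrm K_2(\Phi, R/I)$ to be surjective, and then deduce centrality for $R/I$ from centrality for $R$. Write $p\colon R\to R/I$ for the quotient map and let $\varphi\colon\St(\Phi, R)\to\St(\Phi, R/I)$ and $\psi\colon\GG_{\mathrm{sc}}(\Phi, R)\to\GG_{\mathrm{sc}}(\Phi, R/I)$ be the induced homomorphisms, so that $\psi\circ\mathrm{st}_R=\mathrm{st}_{R/I}\circ\varphi$. Comparing the Steinberg presentations over $R$ and over $R/I$, one checks that $\varphi$ is surjective and that its kernel $K$ is the normal closure in $\St(\Phi, R)$ of the set $\{x_\alpha(a)\mid\alpha\in\Phi,\ a\in I\}$: killing these generators turns the relations \eqref{R1}--\eqref{R4} over $R$ (whose structure constants are integers) precisely into the relations over $R/I$. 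Applying $\mathrm{st}_R$ and using that its image is $\mathrm E(\Phi, R)$, we obtain $\mathrm{st}_R(K)=\mathrm E(\Phi, R, I)$, the relative elementary subgroup, i.e. the normal closure in $\mathrm E(\Phi, R)$ of the $t_\alpha(a)$ with $a\in I$.

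First I would record the easy half. Suppose we knew that every $\bar g\in\mathrm K_2(\Phi, R/I)$ admits a $\varphi$-preimage $g'\in\mathrm K_2(\Phi, R)$. By hypothesis $g'$ is central in $\St(\Phi, R)$; since $\varphi$ is surjective, every element of $\St(\Phi, R/I)$ has the form $\varphi(h)$, and $[\bar g,\varphi(h)]=\varphi([g',h])=1$, so $\bar g$ is central. Thus centrality of $\mathrm K_2(\Phi, R/I)$ follows, and the whole lemma reduces to the surjectivity of $\mathrm K_2(\Phi, R)\to\mathrm K_2(\Phi, R/I)$.

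Next I would translate this surjectivity into a statement about elementary subgroups. Given $\bar g\in\mathrm K_2(\Phi, R/I)$, pick any preimage $g\in\St(\Phi, R)$ under $\varphi$ and set $G=\mathrm{st}_R(g)$. Then $\psi(G)=\mathrm{st}_{R/I}(\bar g)=1$, so $G\in\mathrm E(\Phi, R)\cap\GG_{\mathrm{sc}}(\Phi, R, I)$, where $\GG_{\mathrm{sc}}(\Phi, R, I)=\Ker\psi$ denotes the congruence subgroup. Replacing the lift $g$ by $gk$ with $k\in K$ changes $G$ by the factor $\mathrm{st}_R(k)$, which ranges exactly over $\mathrm{st}_R(K)=\mathrm E(\Phi, R, I)$; hence a central lift $g'\in\mathrm K_2(\Phi, R)$ (i.e. one with $\mathrm{st}_R(g')=1$) exists if and only if $G\in\mathrm E(\Phi, R, I)$. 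Conversely, every element of $\mathrm E(\Phi, R)\cap\GG_{\mathrm{sc}}(\Phi, R, I)$ occurs as such a $G$ (lift it through $\mathrm{st}_R$ and apply $\varphi$). Therefore the required surjectivity is equivalent to the relative normality identity
\[\mathrm E(\Phi, R)\cap\GG_{\mathrm{sc}}(\Phi, R, I)=\mathrm E(\Phi, R, I).\]

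The main obstacle is precisely this identity. The inclusion $\supseteq$ is immediate, but the inclusion $\subseteq$ — that an elementary element congruent to $1$ modulo $I$ already lies in the relative elementary subgroup — is the genuinely hard input and is independent of the $\mathrm K_2$-level structure. For root systems of rank $\geq 2$ it is the relative form of the normality theorem and is classical (Taddei, Vaserstein; Suslin in the linear case); alternatively it may be established by the pro-group argument used for \cref{Normality}, now carried out for the pair $(R, I)$ in place of $R$ alone. Granting it, the chain of equivalences above yields the surjectivity of $\mathrm K_2(\Phi, R)\to\mathrm K_2(\Phi, R/I)$, and the first paragraph then gives the centrality of $\mathrm K_2(\Phi, R/I)$.
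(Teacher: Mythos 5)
Your opening reductions are correct and even useful: $\varphi$ is surjective, $\Ker\varphi$ is the normal closure of the $x_\alpha(a)$ with $a\in I$, hence $\mathrm{st}(\Ker\varphi)=\E(\Phi,R,I)$, and centrality of $\mathrm K_2(\Phi,R/I)$ would indeed follow if every element of it lifted to $\mathrm K_2(\Phi,R)$. The fatal step is your last paragraph: the equality $\E(\Phi,R)\cap\GG_{\mathrm{sc}}(\Phi,R,I)=\E(\Phi,R,I)$ is \emph{not} the relative normality theorem and fails in general. What Suslin, Taddei and Vaserstein prove is that $\E(\Phi,R,I)$ is normal in $\GG_{\mathrm{sc}}(\Phi,R)$, together with the commutator formula $[\GG_{\mathrm{sc}}(\Phi,R,I),\,\E(\Phi,R)]\subseteq\E(\Phi,R,I)$; your equality is strictly stronger. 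In fact, by your own chain of equivalences it is \emph{equivalent} to surjectivity of $\mathrm K_2(\Phi,R)\to\mathrm K_2(\Phi,R/I)$, and the quotient $\bigl(\E(\Phi,R)\cap\GG_{\mathrm{sc}}(\Phi,R,I)\bigr)/\E(\Phi,R,I)$ is exactly the image of the boundary map from relative $\mathrm K_2$-theory --- a global obstruction that is typically nonzero and that no localization or pro-group argument in the style of \cref{Normality} can kill. Concretely, for $F=\mathbb Q(t)$, $R=F[x]$, $I=(x^2)$ one has $\mathrm K_2(F[x])\cong\mathrm K_2(F)$ by homotopy invariance, while $\mathrm K_2(F[x]/(x^2))\cong\mathrm K_2(F)\oplus\Omega^1_F$ with $\Omega^1_F\neq 0$ (van der Kallen's computation for dual numbers), so the map on $\mathrm K_2$ is not surjective; by stability this failure persists unstably for $\Phi=\rA_\ell$ with $\ell$ large. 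Worse, in the paper's intended application $R$ is a polynomial ring over $\ZZ$ in many variables and $R/I$ is an \emph{arbitrary} commutative ring, so your route would force $\mathrm K_2(\Phi,A)$ to be a quotient of the $\mathrm K_2$ of a ``free'' ring for every $A$ --- already false for $A=\mathbb Q$, whose $\mathrm K_2$ is infinite.

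The lemma is nevertheless true because centrality descends without lifting $\bar g$ into $\mathrm K_2(\Phi,R)$; this is the paper's route (the proof of \cite[Lemma~5.3]{St99}, using perfectness and the relative commutator formula of \cite{Va86}). Lift $\bar g$ to an arbitrary $g\in\St(\Phi,R)$ and $\bar h$ to $h$; then $\mathrm{st}([g,h])\in[\GG_{\mathrm{sc}}(\Phi,R,I),\,\E(\Phi,R)]\subseteq\E(\Phi,R,I)$. Writing $\mathrm{st}([g,h])$ as a product of elements ${}^{e}t_\alpha(a)$ with $a\in I$, $e\in\E(\Phi,R)$, and lifting each $e$ through $\mathrm{st}$, one obtains $c'\in\Ker\varphi$ (your description of $\Ker\varphi$ enters exactly here) with $\mathrm{st}(c')=\mathrm{st}([g,h])$. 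Hence $[g,h]=zc'$ with $z\in\mathrm K_2(\Phi,R)$, central by hypothesis, so $[\bar g,\bar h]=\varphi(z)$ is central in $\St(\Phi,R/I)$ since $\varphi$ is surjective. Then $\bar h\mapsto[\bar g,\bar h]$ is a homomorphism into the center by~\eqref{eq:comm-mult-rhs}, and since $\St(\Phi,R/I)$ is perfect for rank $\geq 3$, this homomorphism vanishes; thus $\bar g$ is central.
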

 \begin{proof} Consider the following commutative diagram
 \[\xymatrix{\St(\Phi, R) \ar[r]^{\rho_\mathrm{St}} \ar[d]^{\mathrm{st}} & \St(\Phi, R/I) \ar[d] \\ \mathrm{E}_{\mathrm{sc}}(\Phi,R) \ar[r]^{\rho_\mathrm{E}} & \mathrm{E}_{\mathrm{sc}}(\Phi, R/I).}\]
 We denote by $\St(\Phi, R, I)$ the normal closure in $\St(\Phi, R)$ of the subgroup generated by elements $x_\alpha(s)$, where $s\in I$, $\alpha\in\Phi$. We also denote by $\E_{\mathrm{sc}}(\Phi, R, I)$ the image of this subgroup under $\mathrm{st}$.
 It is clear that $\E_\mathrm{sc}(\Phi, R, I)$ lies in the kernel of $\rho_\mathrm{E}$.
 
 Denote by $H$ the preimage of $\mathrm{K}_2(\Phi, R/I)$ under $\rho_\mathrm{St}$. Also set \[H'=[H, \St(\Phi, R)].\] The identity~\eqref{eq:comm-mult-rhs} shows that $H'$ is a normal subgroup of $\St(\Phi, R)$. Under our assumptions the group $\St(\Phi, R)$ is perfect, therefore using Hall--Witt identity~\eqref{eq:HW} we obtain 
 \begin{multline}\nonumber H' = [H,\, \St(\Phi, R)] = \left[H,\, [\St(\Phi, R), \St(\Phi, R)]\right] \subseteq \\ \subseteq \left[[H,\, \St(\Phi, R)],\, \St(\Phi, R)\right] = [H', \St(\Phi, R)]. \end{multline} 
 Since $\mathrm{st}(H)$ is contained in the congruence subgroup $\GG_\mathrm{sc}(\Phi, R, I)$ we obtain from the standard commutator formula~\cite[Theorem~1]{Va86} the fact that $\mathrm{st}(H')\subseteq\E_\mathrm{sc}(\Phi, R, I)$. Thus, $H'\subseteq \St(\Phi, R, I) \cdot \mathrm{K}_2(\Phi, R)$. Finally, from our assumptions we obtain 
 \[H' \subseteq [H', \St(\Phi, R)] \subseteq [\St(\Phi, R, I)\cdot \mathrm{K}_2(\Phi, R),\, \St(\Phi, R)] \subseteq \St(\Phi,R,I),\] from which the assertion of the lemma follows.
 \end{proof}
 Since every commutative ring $R$ can be presented as a quotient of the polynomial ring over $\ZZ$ with possibly infinite number of variables,
  we may restrict ourselves to considering only integral domains throughout the paper.
 As noted before, in this case the homotopes $R^{(s)}$ turn into the usual principal ideals $sR \trianglelefteq R$.
 
\printbibliography
\end{document}